\definecolor{mynicegreen}{RGB}{52,194,48}
\definecolor{mydarkteal}{RGB}{0,102,102}
\newcommand{\schff}{X}
\newcommand{\adicff}{\mathcal{X}}
\newcommand{\Spa}{\mathrm{Spa}}
\newcommand{\HN}{\mathrm{HN}}
\newcommand{\rk}{\mathrm{rk}}
\newcommand{\slope}{\lambda}
\newcommand{\inj}{\hookrightarrow}
\newcommand{\surj}{\twoheadrightarrow}
\newcommand{\numslope}{m}
\newcommand{\strsheaf}[1][]{\Ocal_{#1}}
\newcommand{\completion}[1]{\widehat{#1}}
\newcommand{\genlift}{\widetilde}
\newcommand{\genred}{\overline}
\newcommand{\genslope}{\lambda}
\newcommand{\genperm}{\sigma}
\newcommand{\genpolygon}{\mathscr{P}}
\newcommand{\gensecpolygon}{\mathscr{Q}}
\newcommand{\genthirdpolygon}{\mathscr{R}}
\newcommand{\dualpolygon}[1]{{#1}^*}
\newcommand{\lineseg}[2]{\underline{#1}^{(#2)}}
\newcommand{\HNslope}[2]{\slope_{#2}(#1)}
\newcommand{\slodom}{\succeq}
\newcommand{\polyset}[1]{\mathbb{P}_{#1}}
\newcommand{\concrearr}[1]{\widehat{#1}}
\newcommand{\gensubvb}{\Dcal}
\newcommand{\genquotvb}{\Fcal}
\newcommand{\genextvb}{\Ecal}
\newcommand{\genvb}{\Ecal}
\newcommand{\gentorsor}{\Ecal}
\newcommand{\gensecvb}{\Fcal}
\newcommand{\genmodif}{\iota}
\newcommand{\modif}[1]{{#1}'}
\newcommand{\gentrivialization}{\beta}
\newcommand{\punctuation}[1]{{#1}^\circ}
\newcommand{\genisocrystal}{N}
\newcommand{\uniformizer}{\pi}
\newcommand{\pseudounif}{\varpi}
\newcommand{\genlocfield}{E}
\newcommand{\compmaxunram}[1]{\breve{#1}}
\newcommand{\FFresfield}{C}
\newcommand{\FFclosedpt}{\infty}
\newcommand{\genperfdring}{R}
\newcommand{\genpadicfield}{K}
\newcommand{\genperfalg}{A}
\newcommand{\integerring}[1]{\strsheaf[#1]}
\newcommand{\tilt}[1]{{#1}^\flat}
\newcommand{\powboundedring}[1]{{#1}^{\circ}}
\newcommand{\witt}{W}
\newcommand{\genfrob}{\phi}
\newcommand{\genscheme}{S}
\newcommand{\topspace}[1]{\vert #1 \vert}
\newcommand{\BdR}{B_{\dR}}
\newcommand{\Be}{B_e}
\newcommand{\gendRunif}{t}
\newcommand{\flagvar}{\mathscr{F}\ell}
\newcommand{\BBmap}[1]{\mathrm{BB}_{#1}}
\newcommand{\Newt}{\mathrm{Newt}}
\newcommand{\newtonmap}[2][]{\nu_{#1}(#2)}
\newcommand{\domcocharset}{X_*(T)^+}
\newcommand{\gencochar}{\mu}
\newcommand{\genmaxtorus}{T}
\newcommand{\genparabolic}{P}
\newcommand{\genborel}{B}
\newcommand{\genlevi}{M}
\numberwithin{equation}{section}
\newcommand{\F}{\mathbb{F}}
\newcommand{\Q}{\mathbb{Q}}
\newcommand{\Z}{\mathbb{Z}}
\newcommand{\et}{\text{\'{e}t}}
\DeclareMathOperator{\GL}{GL}
\DeclareMathOperator{\Spec}{Spec\,}
\DeclareMathOperator{\Bun}{Bun}
\DeclareMathOperator{\Gr}{Gr}
\DeclareMathOperator{\Proj}{Proj\,}
\DeclareMathOperator{\dR}{dR}
\DeclareFontFamily{OT1}{rsfs}{}
\DeclareFontShape{OT1}{rsfs}{n}{it}{<-> rsfs10}{}
\DeclareMathAlphabet{\mathscr}{OT1}{rsfs}{n}{it}
\newcommand{\Dcal}{\mathcal{D}}
\newcommand{\Ecal}{\mathcal{E}}
\newcommand{\Fcal}{\mathcal{F}}
\newcommand{\Ocal}{\mathcal{O}}
\newcommand{\Ycal}{\mathcal{Y}}
\newtheorem{lemma}[subsubsection]{Lemma}
\newtheorem{prop}[subsubsection]{Proposition}
\newtheorem{cor}[subsubsection]{Corollary}
\theoremstyle{remark}
\newtheorem*{remark}{Remark}
\newtheorem{defn}[subsubsection]{Definition}
\newtheorem{example}[subsubsection]{Example}
\newtheorem*{thm*}{Theorem}
\def\th@remark{%
  \thm@headfont{\bfseries}%
  \normalfont 
}
\def\imod#1{\allowbreak\mkern5mu({\operator@font mod}\,\,#1)}
\theoremstyle{theorem}
\newtheorem{theorem}[subsubsection]{Theorem}
\numberwithin{equation}{section}
\begin{document}
	
	\tikzset{
		node style sp/.style={draw,circle,minimum size=\myunit},
		node style ge/.style={circle,minimum size=\myunit},
		arrow style mul/.style={draw,sloped,midway,fill=white},
		arrow style plus/.style={midway,sloped,fill=white},
	}
    
	\title{On nonemptiness of Newton strata in the $B_{\dR}^+$-Grassmannian for $\GL_n$}
   
    \author[S. Hong]{Serin Hong}
    \address{Department of Mathematics, University of Arizona, 617 N Santa Rita Ave, Tucson, AZ 85721}
    \email{serinh@math.arizona.edu}
    
    \begin{abstract} We study the Newton stratification in the $\BdR^+$-Grassmannian for $\GL_n$ associated to an arbitrary (possibly nonbasic) element of $B(\GL_n)$. Our main result classifies all nonempty Newton strata in an arbitrary minuscule Schubert cell. For a large class of elements in $B(\GL_n)$, our classification is given by some explicit conditions in terms of Newton polygons. For the proof, we proceed by induction on $n$ using a previous result of the author that classifies all extensions of two given vector bundles on the Fargues-Fontaine curve. 
    \end{abstract}
	
	\maketitle

	\tableofcontents
	
	\rhead{}

	\chead{}
\section{Introduction}

\subsection{Motivation and main result}$ $

The $\BdR^+$-Grassmannian is an analogue of the affine Grassmannian in $p$-adic geometry. It was introduced by Caraiani-Scholze \cite{CS_cohomunitaryshimura} to study the cohomology of certain Shimura varieties, and also used by Scholze-Weinstein \cite{SW_berkeley} as a crucial tool for the construction of local Shimura varieties. In addition, it played a fundamental role in the work of Fargues-Scholze \cite{FS_geomLL} on the geometrization of the local Langlands correspondence via the geometric Satake equivalence for $p$-adic groups.

The main objective of this paper is to study a natural stratification of the $\BdR^+$-Grassmannian known as the \emph{Newton stratification}, which we briefly describe now. Let us fix a connected reductive group $G$ over a finite extension $\genlocfield$ of $\Q_p$. We write $\Gr_G$ for the $\BdR^+$-Grassmannian for $G$, and $\Gr_{G, \gencochar}$ for the Schubert cell associated to a dominant cocharacter $\gencochar$ of $G$. For an complete algebraically closed extension $\FFresfield$ of $\genlocfield$, we have
\[ \Gr_G(\FFresfield) = G(\BdR)/G(\BdR^+) \quad \text{ and } \quad \Gr_{G, \gencochar}(\FFresfield) = G(\BdR^+)\gencochar(\gendRunif)^{-1}G(\BdR^+)/G(\BdR^+)\]
where $\BdR$ is the $p$-adic de Rham period ring with valuation ring $\BdR^+$, residue field $\FFresfield$ and a fixed uniformizer $\gendRunif$. The Cartan decomposition for $G$ induces a decomposition
\[ \Gr_G = \bigsqcup_{\gencochar \in \domcocharset} \Gr_{G, \gencochar}\]
where $\domcocharset$ denotes the set of all dominant cocharacters of $G$. Moreover, each Schubert cell $\Gr_{G, \gencochar}$ is related to the (diamond of the) $p$-adic flag variety $\flagvar(G, \gencochar)$ via a natural Bialynicki-Birula map
\[ \BBmap{\gencochar}: \Gr_{G, \gencochar} \longrightarrow \flagvar(G, \gencochar),\]
which is an isomorphism if $\gencochar$ is minuscule. 
In order to define the Newton stratification on $\Gr_G$ and its Schubert cells, we consider the stack $\Bun_G$ of $G$-bundles on the Fargues-Fontaine curve $\schff$. By the result of Fargues \cite{Fargues_Gbundle}, the topological space $\topspace{\Bun_G}$ of $\Bun_G$ is in natural bijection with the set $B(G)$ of Frobenius-conjugacy classes of elements of $G(\compmaxunram{\genlocfield})$, where $\compmaxunram{\genlocfield}$ as usual denotes the $p$-adic completion of the maximal unramified extension of $\genlocfield$. 
Given an element $b \in B(G)$, we write $\genvb_b$ for the corresponding $G$-bundle on $\schff$. 
The theorem of Beauville-Laszlo \cite{BL_beauvillelaszlothm} implies that a $G$-bundle on the Fargues-Fontaine curve is specified by the gluing data of the trivial $G$-bundles on $\Spec(\BdR^+)$ and $\schff - \FFclosedpt$, where $\FFclosedpt$ is a fixed closed point on $\schff$ with residue field $\FFresfield$ and completed local ring $\BdR^+$. If we fix $b \in B(G)$, for every point $x \in \Gr_G(\FFresfield)$ we can modify the gluing data for $\genvb_b$ by $x$ to obtain a new $G$-bundle $\genvb_{b, x}$. We thus obtain a map
\[ \Newt_b: \Gr_G(\FFresfield) \longrightarrow B(G)\]
which maps each $x \in \Gr_G(\FFresfield)$ to the element $\modif{b} \in B(G)$ corresponding to $\genvb_{b, x}$. For each Schubert cell $\Gr_{G, \gencochar}$, the Newton stratification associated to $b \in B(G)$ is a decomposition into subdiamonds
\[ \Gr_{G, \gencochar} = \bigsqcup_{\modif{b} \in B(G)} \Gr_{G, \gencochar, b}^{\modif{b}}\]
where $\Gr_{G, \gencochar, b}^{\modif{b}}(\FFresfield)$ is the preimage of $\modif{b}$ in $\Gr_{G, \gencochar}(\FFresfield)$ under the map $\Newt_b$.

The Newton stratification of minuscule Schubert cells was originally introduced in the aforementioned work of Caraiani-Scholze \cite{CS_cohomunitaryshimura} as a key tool for studying the fibers of the Hodge-Tate period map. It has also been used as a pivotal tool for studying the $p$-adic period domain by many authors, such as Chen-Fargues-Shen \cite{CFS_admlocus}, Shen \cite{Shen_HNstrata}, Chen \cite{Chen_FRconjnonbasic}, Viehmann \cite{Viehmann_weakadmlocNewton}, Nguyen-Viehmann \cite{NV_HNstrata}, and Chen-Tong \cite{CT_weakaddandnewton}. 

For the trivial element $b = 1$, a result of Rapoport \cite{Rapoport_basicminuscule} shows that the Newton stratum $\Gr_{G, \gencochar, b}^{\modif{b}}$ is nonempty if and only if $\modif{b}$ is an element of the set $B(G, -\gencochar)$ defined by Kottwitz \cite{Kottwitz_isocrystal}. When $b$ is basic, meaning that $\genvb_b$ is semistable, Chen-Fargues-Shen \cite{CFS_admlocus} and Viehmann \cite{Viehmann_weakadmlocNewton} extends the result of Rapoport to parametrize all nonempty Newton strata by a generalized Kottwitz set. However, for a general element $b \in B(G)$, no explicit parametrization is known for nonempty Newton strata in an arbitrary Schubert cell. 

In order to explain our main result, which  classifies all nonempty Newton strata in the Schubert cell $\Gr_{G, \gencochar}$ for $G = \GL_n$ and a minuscule cocharacter $\gencochar$, we need to set up some notations. 
Let us recall that, as observed by Kottwitz \cite{Kottwitz_isocrystal}, the set $B(\GL_n)$ is naturally identified with the set of concave polygons on the interval $[0, n]$ with rational slopes and integer breakpoints, where a polygon refers to a continuous piecewise linear function whose graph passes through the origin. Given an element $b \in B(\GL_n)$, we write $\newtonmap{b}$ for the corresponding polygon 
and often regard it as a tuple of rational numbers $(\newtonmap[1]{b}, \cdots, \newtonmap[n]{b})$ where $\newtonmap[i]{b}$ denotes the slope of $\newtonmap{b}$ on the interval $[i-1, i]$. 
We may also represent the dominant cocharacter $\gencochar$ of $\GL_n$ as an $n$-tuple of descending integers $(\gencochar_1, \cdots, \gencochar_n$) and regard it as a concave polygon on $[0, n]$ whose slope on $[i-1, i]$ is $\gencochar_i$. 

Given two arbitrary elements $b, ~\modif{b} \in B(\GL_n)$, our main result gives an inductive criterion for the nonemptiness of the Newton stratum $\Gr_{\GL_n, \gencochar, b}^{\modif{b}}$. 
Let us provide a brief description of the inductive criterion here and refer the readers to Theorem \ref{classification of nonempty minuscule Newton strata, general case} for a precise statement. 
If $b$ is basic, meaning that $\newtonmap{b}$ is a line segment, the desired classification is given by the aforementioned results of Chen-Fargues-Shen \cite{CFS_admlocus} and Viehmann \cite{Viehmann_weakadmlocNewton}. If $b$ is not basic, we have unique elements $a \in B(\GL_m)$ and $c \in B(\GL_{n-m})$ for some integer $m$ such that $\newtonmap{a}$ and $\newtonmap{c}$ together form a partition of $\newtonmap{b}$ with $\newtonmap{a}$ being the line segment in $\newtonmap{b}$ of maximum slope. The key observation for our main result is that $\Gr_{\GL_n, \gencochar, b}^{\modif{b}}$ is not empty if and only if there exist $\modif{a} \in B(\GL_m)$ and $\modif{c} \in B(\GL_{n-m})$ with the following properties:
\begin{enumerate}[label=(\roman*)]
\item\label{nonemptiness of smaller strata for inductive criterion} The Newton strata $\Gr_{\GL_m, \gencochar_1, a}^{\modif{a}}$ and $\Gr_{\GL_{n-m}, \gencochar_2, c}^{\modif{c}}$ are not empty for some minuscule cocharacters $\gencochar_1$ of $\GL_m$ and $\gencochar_2$ of $\GL_{n-m}$. 
\smallskip

\item\label{extension property for inductive criterion} The vector bundle $\genvb_{\modif{b}}$ arises as an extension of $\genvb_{\modif{c}}$ by $\genvb_{\modif{a}}$; in other words, there exists a short exact sequence of vector bundles
\[ 
\begin{tikzcd}
0 \arrow[r]& \genvb_{\modif{a}} \arrow[r] & \genvb_{\modif{b}} \arrow[r]  & \genvb_{\modif{c}} \arrow[r] & 0.
\end{tikzcd}
\]
\end{enumerate}
The cocharacters $\gencochar_1$ and $\gencochar_2$ in the property \ref{nonemptiness of smaller strata for inductive criterion} are uniquely determined by $\modif{a}$ and $\modif{c}$. Moreover, the property \ref{nonemptiness of smaller strata for inductive criterion} imposes explicit bounds on the slopes in $\newtonmap{\modif{a}}$ and $\newtonmap{\modif{c}}$, and consequently yields a finite list of candidates for $(\modif{a}, \modif{c})$. For each candidate, we can check the property \ref{extension property for inductive criterion} by a previous result of the author \cite{Hong_extvbfinal}. Then for each candidate with the property \ref{extension property for inductive criterion}, we can inductively proceed to check the property \ref{nonemptiness of smaller strata for inductive criterion}; indeed, if $\newtonmap{b}$ has $r$ distinct slopes, then $\newtonmap{c}$ has $r-1$ distinct slopes while $\newtonmap{a}$ is a line segment by construction. 

\begin{figure}[H]
\begin{tikzpicture}[scale=0.75]
		

		\coordinate (left) at (0, 0);
		\coordinate (q0) at (3, 3);
		\coordinate (q1) at (5, 4.2);
		\coordinate (q2) at (8, 4.8);
		

		\coordinate (p0) at (4.5, 1.4);
		\coordinate (p1) at (8, 1.7);

		\draw[step=1cm,thick, color=red] (left) -- (q0) --  (q1) -- (q2);
		\draw[step=1cm,thick, color=mynicegreen] (left) -- (p0) --  (p1);
		
		\draw [fill] (q0) circle [radius=0.05];		
		\draw [fill] (q1) circle [radius=0.05];		
		\draw [fill] (q2) circle [radius=0.05];		
		\draw [fill] (left) circle [radius=0.05];
		
		
		\draw [fill] (p0) circle [radius=0.05];		
		\draw [fill] (p1) circle [radius=0.05];		

		

		
		\path (q0) ++(-0.7, 0) node {\color{red}$\newtonmap{b}$};
		\path (p0) ++(-0.5, -0.7) node {\color{mynicegreen}$\newtonmap{\modif{b}}$};

\end{tikzpicture}
\begin{tikzpicture}[scale=0.5]
        \pgfmathsetmacro{\textycoordinate}{4}
		\draw[->, line width=0.6pt] (0, \textycoordinate) -- (1.5,\textycoordinate);
		\draw (0,0) circle [radius=0.00];	
\end{tikzpicture}
\begin{tikzpicture}[scale=0.75]
		

		\coordinate (left) at (0, 0);
		\coordinate (q0) at (3, 3);
		\coordinate (q1) at (5, 4.2);
		\coordinate (q2) at (8, 4.8);
		
		\coordinate (r1) at (2.5, 1.2);
		\coordinate (r2) at (5, 1.7);

		\coordinate (p0) at (4.5, 1.4);
		\coordinate (p1) at (8, 1.7);
				
		\draw[step=1cm,thick, color=orange] (left) -- (q0);
		\draw[step=1cm,thick, color=violet] (q0) -- (q1) -- (q2);
		\draw[step=1cm,thick, color=mynicegreen] (left) -- (p0) --  (p1);
		\draw[step=1cm,thick, color=blue] (left) -- (r1) -- (r2);
		\draw[step=1cm,thick, color=magenta] (r2) -- (p1);
		
		\draw [fill] (q0) circle [radius=0.05];		
		\draw [fill] (q1) circle [radius=0.05];		
		\draw [fill] (q2) circle [radius=0.05];		
		\draw [fill] (left) circle [radius=0.05];
		
		\draw [fill] (r1) circle [radius=0.05];		
		\draw [fill] (r2) circle [radius=0.05];	
		
		\draw [fill] (p0) circle [radius=0.05];		
		\draw [fill] (p1) circle [radius=0.05];		

		

		
		\path (q0) ++(-1.5, -0.7) node {\color{orange}$\newtonmap{a}$};
		\path (q0) ++(1.3, 1.4) node {\color{violet}$\newtonmap{c}$};
		\path (r1) ++(0.7, 0.6) node {\color{blue}$\newtonmap{\modif{c}}$};
		\path (p0) ++(-0.5, -0.7) node {\color{mynicegreen}$\newtonmap{\modif{b}}$};
		\path (r2) ++(1.4, 0.4) node {\color{magenta}$\newtonmap{\modif{a}}$};

\end{tikzpicture}
\caption{Illustration of the inductive criterion}
\end{figure}
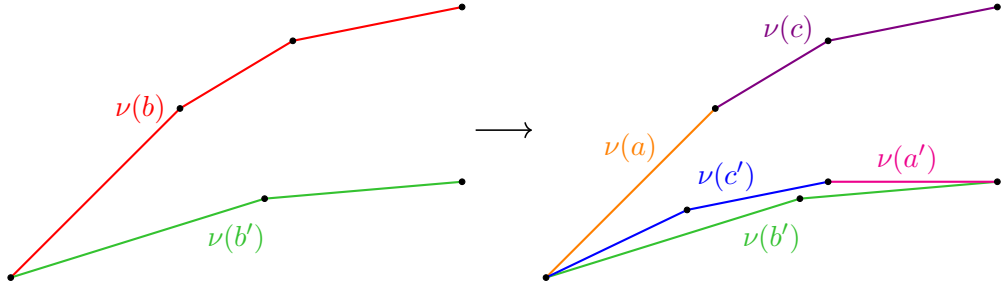

For a concrete example, we illustrate how our inductive criterion shows the nonemptiness of the stratum $\Gr_{\GL_8, \gencochar, b}^{\modif{b}}$ with
\[ 
\begin{aligned}
\newtonmap{b} &= \left(\dfrac{2}{3},\dfrac{2}{3}, \dfrac{2}{3}, \dfrac{3}{5}, \dfrac{3}{5}, \dfrac{3}{5}, \dfrac{3}{5}, \dfrac{3}{5} \right),\\
\newtonmap{\modif{b}} &= \left(\dfrac{1}{4}, \dfrac{1}{4}, \dfrac{1}{4}, \dfrac{1}{4}, 0, 0, 0, 0\right),\\
\gencochar &= (1, 1, 1, 1, 0, 0, 0, 0). 
\end{aligned}
\]
The elements $a \in B(\GL_3)$ and $c \in B(\GL_{5})$ are given by
\[ 
\newtonmap{a} = \left(\dfrac{2}{3}, \dfrac{2}{3}, \dfrac{2}{3}\right) \quad \text{ and } \quad \newtonmap{c} = \left(\dfrac{3}{5}, \dfrac{3}{5}, \dfrac{3}{5}, \dfrac{3}{5}, \dfrac{3}{5}\right). 
\]
We apply the inductive criterion with $\modif{a} \in B(\GL_3)$ and $\modif{c} \in B(\GL_5)$ given by
\[ 
\newtonmap{\modif{a}} = \left(-\dfrac{1}{3}, -\dfrac{1}{3}, -\dfrac{1}{3}\right) \quad \text{ and } \quad \newtonmap{\modif{c}} = \left(\dfrac{1}{2}, \dfrac{1}{2}, \dfrac{1}{2}, \dfrac{1}{2}, 0\right). 
\]
Indeed, the nonemptiness of the stratum $\Gr_{\GL_8, \gencochar, b}^{\modif{b}}$ follows from the following statements:
\begin{itemize}
\item $\genvb_{\modif{b}}$ arises as an extension of $\genvb_{\modif{c}}$ by $\genvb_{\modif{a}}$.
\smallskip

\item $\Gr_{\GL_3, \gencochar_1, a}^{\modif{a}}$ with $\gencochar_1 = (1, 1, 1)$ and $\Gr_{\GL_{5}, \gencochar_2, c}^{\modif{c}}$ with $\gencochar_2 = (1, 0, 0, 0, 0)$ are not empty. 
%
\end{itemize}
For the second statement, we note that $a$ and $c$ are basic for $\newtonmap{a}$ and $\newtonmap{c}$ being line segments. 

A special case of our main result reduces to a noninductive criterion as follows: 
\begin{theorem}\label{classification of nonempty Newton strata, case of sufficiently distinct slopes intro}
Let $\gencochar$ be a minuscule dominant cocharacter of $G = \GL_n$ represented by an $n$-tuple with entries $0$ and $1$. Given two arbitrary elements $b, \modif{b} \in B(\GL_n)$ such that 
the difference between any two distinct slopes in $\newtonmap{b}$ is greater than $1$, 
the Newton stratum $\Gr_{\GL_n, \gencochar, b}^{\modif{b}}$ is nonempty if and only if the following conditions are satisfied:
\begin{enumerate}[label=(\roman*)]
\item\label{modified Kottwitz set condition for nonempty Newton strata, intro} The polygon $\newtonmap{\modif{b}}$ lies below the polygon $\newtonmap{b} + \dualpolygon{\gencochar}$ with the same endpoints, where $\dualpolygon{\gencochar}$ denotes the unique dominant cocharacter of $\GL_n$ in the conjugacy class of $\gencochar^{-1}$. 
\smallskip

\item\label{slopewise dominance condition for nonemtpy Newton strata, intro} 
We have inequalities
\[ \newtonmap[i]{\modif{b}} \leq \newtonmap[i]{b} \leq \newtonmap[i]{\modif{b}} +1 \quad \text{ for } i = 1, \cdots, n.\]

\item\label{breakpoint condition for nonempty Newton strata, intro} For each breakpoint of $\newtonmap{b}$, there exists 
a breakpoint of $\newtonmap{\modif{b}}$ with the same $x$-coordinate. 
\end{enumerate}
\end{theorem}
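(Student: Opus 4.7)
The plan is to derive this theorem from Theorem \ref{classification of nonempty minuscule Newton strata, general case} by induction on the number $r$ of distinct slopes of $\newtonmap{b}$, with the slope-separation hypothesis serving to collapse the inductive criterion into the three explicit polygon conditions (i)--(iii). For the base case $r = 1$ (so $b$ is basic with single slope $\genslope$), the results of Chen--Fargues--Shen \cite{CFS_admlocus} and Viehmann \cite{Viehmann_weakadmlocNewton} identify $\Gr_{\GL_n, \gencochar, b}^{\modif{b}}$ as nonempty if and only if $\modif{b} \in B(\GL_n, -\gencochar)$, which is precisely condition (i); condition (iii) is vacuous, and (ii) follows from (i) by evaluating the Kottwitz bound at $x = 1$ and $x = n - 1$ (using that the entries of $\dualpolygon{\gencochar}$ lie in $\{0, -1\}$) together with concavity of $\newtonmap{\modif{b}}$.

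For the inductive step, write $\newtonmap{b} = \newtonmap{a} \concat \newtonmap{c}$ with $\newtonmap{a}$ the top line segment of length $m = m_1$ and slope $\genslope_1$, so $c \in B(\GL_{n-m})$ has $r - 1$ distinct slopes and inherits the slope-separation hypothesis. In the necessity direction, Theorem \ref{classification of nonempty minuscule Newton strata, general case} supplies $\modif{a}, \modif{c}$ and minuscule $\gencochar_1, \gencochar_2$ satisfying \ref{nonemptiness of smaller strata for inductive criterion} and \ref{extension property for inductive criterion}; the base case applied to $(a, \modif{a}, \gencochar_1)$ forces the slopes of $\newtonmap{\modif{a}}$ into $[\genslope_1 - 1, \genslope_1]$, and the inductive hypothesis applied to $(c, \modif{c}, \gencochar_2)$ forces the slopes of $\newtonmap{\modif{c}}$ into $\bigcup_{j \geq 2}[\genslope_j - 1, \genslope_j]$, all strictly below $\genslope_1 - 1$ by slope separation. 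Then $\Hom(\genvb_{\modif{c}}, \genvb_{\modif{a}})$ has all strictly positive HN slopes, $H^1$ vanishes, the extension in \ref{extension property for inductive criterion} must split, and $\genvb_{\modif{b}} \simeq \genvb_{\modif{a}} \oplus \genvb_{\modif{c}}$. Hence $\newtonmap{\modif{b}} = \newtonmap{\modif{a}} \concat \newtonmap{\modif{c}}$, from which (i)--(iii) for $(b, \modif{b}, \gencochar)$ follow by concatenating the analogous conditions for the two pieces.

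For sufficiency, use (iii) to split $\newtonmap{\modif{b}}$ at $x = m$ into polygons $\newtonmap{\modif{a}}$ and $\newtonmap{\modif{c}}$, producing $\modif{a} \in B(\GL_m)$ and $\modif{c} \in B(\GL_{n-m})$. Writing $k$ for the number of $1$'s in $\gencochar$ and setting $k_1 := \deg(a) - \deg(\modif{a})$ and $k_2 := k - k_1$, the endpoint equality in (i) yields $k_1 + k_2 = k$ while (ii) forces $k_1$ and $k_2$ to be integers in $[0, m]$ and $[0, n-m]$ respectively. Define $\gencochar_1 := (1^{k_1}, 0^{m - k_1})$ and $\gencochar_2 := (1^{k_2}, 0^{n - m - k_2})$; the restrictions of (ii), (iii) to $[0, m]$ and $[m, n]$ yield the corresponding conditions for $(a, \modif{a}, \gencochar_1)$ and $(c, \modif{c}, \gencochar_2)$, and a majorization argument (any concave polygon with slopes in $[\lambda - 1, \lambda]$ and fixed endpoints lies below the polygon obtained by placing all $\lambda$-slopes first) yields the local Kottwitz bounds (i). The base case and the inductive hypothesis then give nonemptiness of $\Gr_{\GL_m, \gencochar_1, a}^{\modif{a}}$ and $\Gr_{\GL_{n-m}, \gencochar_2, c}^{\modif{c}}$. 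The slope-separation argument again gives $\genvb_{\modif{b}} \simeq \genvb_{\modif{a}} \oplus \genvb_{\modif{c}}$ as a split extension, verifying \ref{extension property for inductive criterion}, and Theorem \ref{classification of nonempty minuscule Newton strata, general case} concludes.

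The main obstacle is the two-directional bookkeeping of the Kottwitz bound (i) under splitting and concatenation of polygons: in one direction one must pass from the global bound to the local ones, and in the other one must reassemble the global bound from the local ones. The subtlety is that the dominant rearrangement $\dualpolygon{\gencochar}$ concentrates all $-1$'s on the right of $[0, n]$, whereas the concatenation $\dualpolygon{\gencochar_1} \concat \dualpolygon{\gencochar_2}$ distributes them between the two blocks; the two polygons generally differ, and reconciling them requires the careful majorization argument above, leveraging (ii), (iii), and the slope-separation hypothesis in an essential way.
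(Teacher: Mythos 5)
Your argument is essentially sound, and its necessity direction is in substance the paper's own: the paper also splits off the top semistable block, uses the slope bounds $\HN(\genvb_{\modif{a}})+\lineseg{1}{m}\slodom\HN(\genvb_a)\slodom\HN(\genvb_{\modif{a}})$ together with the hypothesis $\genslope_1-\genslope_2>1$ to force all slopes of $\modif{a}$ above all slopes of $\modif{c}$, and then invokes the splitting criterion (Proposition \ref{splitting extension for vector bundles with dominating slopes}) to produce the breakpoint at $x=m$, inducting on the tail. Where you genuinely diverge is the sufficiency direction and the treatment of the Kottwitz bound. The paper does not run sufficiency through the inductive criterion at all: given (ii) and (iii) it splits \emph{both} polygons at every breakpoint of $\newtonmap{b}$ in one step, applies the semistable Lemma \ref{minuscule modification of a semistable bundle} on each block, and takes the direct sum of the resulting modifications (see Proposition \ref{classification of minuscule modifications, case of sufficiently distinct slopes}); this avoids ever having to verify a Bruhat bound for the non-semistable tail $(c,\modif{c})$, and in fact the slope-separation hypothesis plays no role in that direction. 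By contrast, your route through Theorem \ref{classification of nonempty minuscule Newton strata, general case} on a two-block split must feed condition (i) for $(c,\modif{c},\gencochar_2)$ into the inductive hypothesis, and this is exactly where your write-up is thinnest: the parenthetical majorization claim only covers a single semistable block. To pass from the blockwise bounds to $\newtonmap{c}+\dualpolygon{\gencochar_2}\geq\newtonmap{\modif{c}}$ (and, in the necessity direction, from the two local bounds back to $\newtonmap{b}+\dualpolygon{\gencochar}\geq\newtonmap{\modif{b}}$) you also need that the dominant cocharacter majorizes the concatenation of the blockwise cocharacters, i.e.\ $\dualpolygon{\gencochar}\geq\dualpolygon{\gencochar_1}\concat\dualpolygon{\gencochar_2}$ with matching endpoints at the junction; this is precisely what the paper's rearrangement lemmas (Lemma \ref{maximality of concave rearrangement}, Lemma \ref{bruhat order and direct sums}, Lemma \ref{inequality on direct sum and sum}) supply, and what the paper uses to prove the general necessity statement Proposition \ref{classification of minuscule modifications, necessity part} without any slope hypothesis. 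The missing step is true and elementary, so I regard it as a detail to be filled rather than a flaw; alternatively, you could sidestep it entirely by strengthening your induction to ``(ii) and (iii) imply nonemptiness'' and quoting Proposition \ref{classification of minuscule modifications, necessity part} for (i), which would bring your sufficiency argument closer to the paper's and make it cleaner.
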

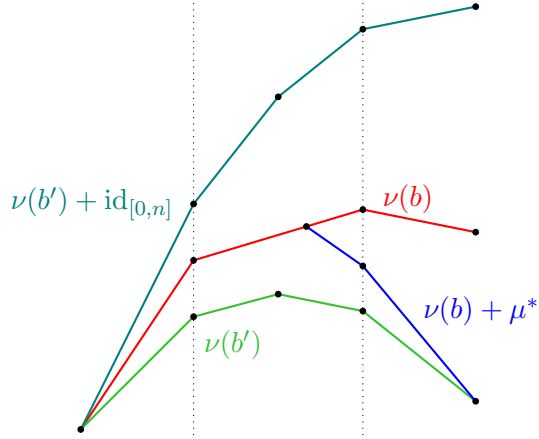
\begin{figure}[H]
\begin{tikzpicture}[scale=0.75]
		\coordinate (p00) at (2, 4);
		\coordinate (p11) at (3.5, 5.9);
		\coordinate (p22) at (5, 7.1);
		\coordinate (p33) at (7, 7.5);

		\coordinate (left) at (0, 0);
		\coordinate (q0) at (2, 3);
		\coordinate (q1) at (5, 3.9);
		\coordinate (q2) at (7, 3.5);
		
		\coordinate (r1) at (4, 3.6);
		\coordinate (r2) at (5, 2.9);

		\coordinate (p0) at (2, 2);
		\coordinate (p1) at (3.5, 2.4);
		\coordinate (p2) at (5, 2.1);
		\coordinate (p3) at (7, 0.5);
				
		\draw[step=1cm,thick, color=red] (left) -- (q0) --  (q1) -- (q2);
		\draw[step=1cm,thick, color=mynicegreen] (left) -- (p0) --  (p1) -- (p2) -- (p3);
		\draw[step=1cm,thick, color=teal] (left) -- (p00) --  (p11) -- (p22) -- (p33);
		\draw[step=1cm,thick, color=blue] (r1) -- (r2) -- (p3);
		
		\draw [fill] (q0) circle [radius=0.05];		
		\draw [fill] (q1) circle [radius=0.05];		
		\draw [fill] (q2) circle [radius=0.05];		
		\draw [fill] (left) circle [radius=0.05];
		
		\draw [fill] (r1) circle [radius=0.05];		
		\draw [fill] (r2) circle [radius=0.05];	
		
		\draw [fill] (p0) circle [radius=0.05];		
		\draw [fill] (p1) circle [radius=0.05];		
		\draw [fill] (p2) circle [radius=0.05];		
		\draw [fill] (p3) circle [radius=0.05];		

		\draw [fill] (p00) circle [radius=0.05];		
		\draw [fill] (p11) circle [radius=0.05];		
		\draw [fill] (p22) circle [radius=0.05];	
		\draw [fill] (p33) circle [radius=0.05];	
		
		\draw[step=1cm,dotted] (2, -0.1) -- (2, 7.6);
		\draw[step=1cm,dotted] (5, -0.1) -- (5, 7.6);

		
		\path (q1) ++(0.8, 0.2) node {\color{red}$\newtonmap{b}$};
		\path (r2) ++(2.1, -0.8) node {\color{blue}$\newtonmap{b} + \dualpolygon{\gencochar}$};
		\path (p00) ++(-1.8, 0) node {\color{teal}$\newtonmap{\modif{b}} + \mathrm{id}_{[0, n]}$};
		\path (p0) ++(0.7, -0.5) node {\color{mynicegreen}$\newtonmap{\modif{b}}$};

\end{tikzpicture}
\caption{Illustration of the conditions in Theorem \ref{classification of nonempty Newton strata, case of sufficiently distinct slopes intro}}
\end{figure}

The condition \ref{modified Kottwitz set condition for nonempty Newton strata, intro} is in fact equivalent to having $\modif{b}$ in the generalized Kottwitz set considered by Chen-Fargues-Shen \cite{CFS_admlocus} and Viehmann \cite{Viehmann_weakadmlocNewton}. When $b$ is basic, the condition \ref{modified Kottwitz set condition for nonempty Newton strata, intro} also implies the conditions \ref{slopewise dominance condition for nonemtpy Newton strata, intro} and \ref{breakpoint condition for nonempty Newton strata, intro}. Hence when $b$ is basic Theorem \ref{classification of nonempty Newton strata, case of sufficiently distinct slopes intro} agrees with the aforementioned result of Chen-Fargues-Shen \cite{CFS_admlocus} and Viehmann \cite{Viehmann_weakadmlocNewton}. 

The hypothesis on the cocharacter $\gencochar$ having entries $0$ and $1$ is insignificant; indeed, without this assumption we still get a similar statement by a simple reduction technique as stated in Proposition \ref{reduction to minimal minuscule}. On the other hand, the hypothesis on the slopes in $\newtonmap{b}$ is crucial. For the general case, the conditions \ref{modified Kottwitz set condition for nonempty Newton strata, intro} and \ref{slopewise dominance condition for nonemtpy Newton strata, intro} are still necessary but not sufficient.

\subsection{Outline of the proof}$ $

Given a vector bundle $\genvb$ on the Fargues-Fontaine curve $\schff$, its \emph{minuscule effective modification at $\FFclosedpt$} of degree $d$ refers to an injective bundle map $\modif{\genvb} \inj \genvb$ whose cokernel is the skyscraper sheaf at $\FFclosedpt$ with value $\FFresfield^{\oplus d}$. 
The Newton stratum $\Gr_{\GL_n, \gencochar, b}^{\modif{b}}$ is not empty if and only if there exists a minuscule effective modification $\genvb_{\modif{b}} \inj \genvb_b$ at $\FFclosedpt$.
We thus wish to classify all minuscule effective modifications of $\genvb_b$ at $\FFclosedpt$. If $b$ is basic, the desired classification is given by the aforementioned results of Chen-Fargues-Shen \cite{CFS_admlocus} and Viehmann \cite{Viehmann_weakadmlocNewton}. Let us now assume that $b$ is not basic. 
We can find a direct sum decomposition
\[ \genvb_b \simeq \genvb_a \oplus \genvb_c \quad \text{ with } \quad a \in B(\GL_m) \text{ and } c \in B(\GL_{n-m})\]
where $a$ is basic such that $\newtonmap{a}$ equals the line segment in $\newtonmap{b}$ of maximum slope. For every minuscule effective modification $\genmodif: \genvb_{\modif{b}} \inj \genvb_b$ at $\FFclosedpt$, the above decomposition extends to a commutative diagram of short exact sequences
\begin{equation*}\label{extension diagram of minuscule modifications}
\begin{tikzcd}
0 \arrow[r]& \genvb_a \arrow[r] & \genvb_b \arrow[r]& \genvb_c \arrow[r]& 0\\
0 \arrow[r]& \genvb_{\modif{a}} \arrow[r] \arrow[u, "\alpha", hookrightarrow] & \genvb_{\modif{b}} \arrow[r] \arrow[u, "\beta", hookrightarrow] & \genvb_{\modif{c}} \arrow[r] \arrow[u, "\gamma", hookrightarrow]& 0
\end{tikzcd}
\end{equation*}
where $\alpha$ and $\gamma$ are also minuscule effective modifications at $\FFclosedpt$. Conversely, given such a commutative diagram we apply a result of Chen-Tong \cite{CT_weakaddandnewton} to observe that $\alpha$ and $\gamma$ can be adjusted so that $\beta$ is a minuscule effective modification at $\FFclosedpt$. Then we use a previous result of the author \cite{Hong_extvbfinal} to classify all vector bundles $\genvb_{\modif{a}}$ and $\genvb_{\modif{c}}$ that fit into such a commutative diagram, and consequently proceed by induction to obtain the desired classification.

\subsection{Notations and conventions}$ $

Throughout the paper, we fix the following data:
\begin{itemize}
\item $\genlocfield$ is a finite extension of $\Q_p$. 
\smallskip

\item $\FFresfield$ is a complete and algebraically closed extension of $\genlocfield$. 
\smallskip

\item $G$ is a reductive group over $\genlocfield$ with Borel subgroup $\genborel$ and maximal torus $\genmaxtorus \subseteq \genborel$. 
\end{itemize}
We also retain the following notations:
\begin{itemize}
\item $\compmaxunram{\genlocfield}$ is the $p$-adic completion of the maximal unramified extension of $\genlocfield$.  
\smallskip


\item $B(G)$ is the set of Frobenius-conjugacy classes of elements of $G(\compmaxunram{\genlocfield})$. 
\smallskip

\item $\domcocharset$ is the set of all dominant cocharacters of $G$.
\end{itemize}
In addition, we use the following standard notations:
\begin{itemize}
\item Given a valued field $\genpadicfield$, we write $\integerring{\genpadicfield}$ for its valuation ring. 
\smallskip

\item Given a ringed space $\genscheme$, we write $\strsheaf[\genscheme]$ for its structure sheaf.
\smallskip

\smallskip

\item Given a perfectoid ring $\genperfdring$, we write $\tilt{\genperfdring}$ for its tilt and $\powboundedring{\genperfdring}$ for its subring of power bounded elements. 
\smallskip

\item Given a perfect $\F_p$-algebra $\genperfalg$, we write $\witt(\genperfalg)$ for the ring of Witt vectors over $\genperfalg$. 
\end{itemize}

\subsection*{Acknowledgments} 
The author is grateful to Miaofen Chen, Jilong Tong, and Eva Viehmann for their valuable comments which led to the correct formulation of the main result. The author is also thankful to the anonymous referee for helpful suggestions.  
This work was partially supported by the Simons Foundation under Grant Number 814268 while the author was a Simons Postdoctoral Fellow at the Simons Laufer Mathematical Sciences Institute in Berkeley, California. 



\section{Preliminaries}\label{background}

In this section, we review some basic facts about the $\BdR^+$-Grassmannian and $G$-bundles on the Fargues-Fontaine curve.


\subsection{The $\BdR^+$-Grassmannian}$ $

\begin{prop}[{\cite[Proposition 2.4]{Fontaine_BdR}}, {\cite[Lemma 3.6.3]{KL_relpadic1}}]\label{relative fontaine map}
Let $\genperfdring$ be a perfectoid algebra over $\FFresfield$. There exists a natural surjective homomorphism $\witt(\tilt{\powboundedring{\genperfdring}}) \surj \powboundedring{\genperfdring}$ whose kernel is a principal ideal of $\witt(\tilt{\powboundedring{\genperfdring}})$. 
\end{prop}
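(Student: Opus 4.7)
The plan is to construct the map as the classical Fontaine-style $\theta$-map and then verify the two claimed properties in turn. I would not try to reprove the absolute case from scratch; rather, I would build $\theta$ relatively out of the multiplicative and additive structure of the tilt.

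First I would construct the map. Since $\genperfdring$ is perfectoid over $\FFresfield$, its tilt $\tilt{\genperfdring}$ is a perfectoid $\F_p$-algebra and there is a canonical multiplicative identification $\powboundedring{\tilt{\genperfdring}} = \lim_{x \mapsto x^p} \powboundedring{\genperfdring}/p$. For $x \in \powboundedring{\tilt{\genperfdring}}$ I would let $\untilt{x} \in \powboundedring{\genperfdring}$ denote the $0$-th component under the identification $\powboundedring{\tilt{\genperfdring}} \simeq \lim_{x \mapsto x^p} \powboundedring{\genperfdring}$ (lifting from $\powboundedring{\genperfdring}/p$ using the Frobenius tower), which is a multiplicative map. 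Since $\witt(\tilt{\powboundedring{\genperfdring}})$ is $p$-torsion-free and $p$-adically complete and $\powboundedring{\genperfdring}$ is $p$-adically complete, the universal property of Witt vectors of a perfect $\F_p$-algebra lets me extend the multiplicative map $[\cdot] \mapsto \untilt{(\cdot)}$ uniquely to a ring homomorphism
\[
\theta \colon \witt(\tilt{\powboundedring{\genperfdring}}) \longrightarrow \powboundedring{\genperfdring}, \qquad \sum_{n \geq 0} [x_n] p^n \longmapsto \sum_{n \geq 0} \untilt{x_n}\, p^n.
\]
Functoriality in $\genperfdring$ is then immediate from the construction.

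Next I would establish surjectivity by a $p$-adic approximation argument. Fix a pseudouniformizer $\pseudounif \in \powboundedring{\genperfdring}$ admitting a compatible system of $p$-power roots with $\pseudounif^p \mid p$; such a $\pseudounif$ exists by the perfectoid assumption. For any $y \in \powboundedring{\genperfdring}$, the tilting equivalence modulo $p$ yields $y_0 \in \powboundedring{\tilt{\genperfdring}}$ with $\untilt{y_0} \equiv y \pmod{p}$, so $y - \theta([y_0]) \in p \powboundedring{\genperfdring}$. Iterating and using $p$-adic completeness of $\powboundedring{\genperfdring}$ produces a preimage of $y$ in $\witt(\tilt{\powboundedring{\genperfdring}})$; this is the standard Teichmüller expansion argument.

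Finally, for principality of the kernel, I would exhibit an explicit generator $\xi$. Pick $\pseudounif^\flat \in \powboundedring{\tilt{\genperfdring}}$ with $\untilt{(\pseudounif^\flat)} = \pseudounif$; since $\pseudounif^p \mid p$ in $\powboundedring{\genperfdring}$, the element $p \in \witt(\tilt{\powboundedring{\genperfdring}})$ admits an expression of the form $p = \sum_{n \geq 0} [c_n] p^n$ with $c_0 \in (\pseudounif^\flat) \powboundedring{\tilt{\genperfdring}}$, and I would set $\xi = p - [c_0]$, whose Teichmüller coefficient $c_1$ is a unit in $\powboundedring{\tilt{\genperfdring}}$ (this is the standard ``primitive element of degree $1$''). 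It is clear that $\theta(\xi) = 0$. To prove that $\xi$ generates $\ker(\theta)$, I would take $\eta \in \ker(\theta)$ and successively divide by $\xi$ modulo powers of $p$: working mod $[\pseudounif^\flat]$, the relation $\theta(\eta) = 0$ forces the reduction of $\eta$ to be divisible by the image of $\xi$ in $\witt(\tilt{\powboundedring{\genperfdring}})/[\pseudounif^\flat]$, which combined with $p$-adic completeness and the fact that $c_1$ is a unit yields $\eta \in \xi \cdot \witt(\tilt{\powboundedring{\genperfdring}})$.

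The main obstacle I expect is this last step: verifying that the explicit $\xi$ really generates the kernel rather than merely lying in it. Surjectivity and the construction of $\theta$ are formal consequences of Witt-vector yoga and $p$-adic completeness, but principality requires one to exploit the perfectoid hypothesis in an essential way (namely that $\pseudounif^p$ divides $p$ up to a unit, and that Frobenius is surjective on $\powboundedring{\genperfdring}/p$) in order to iteratively divide out by $\xi$. In practice I would either reduce to the case where $\genperfdring$ is a perfectoid field (where the statement is standard) by a faithfully flat descent along $\genperfdring \to \genperfdring \wh{\otimes}_{\FFresfield} \FFresfield$, or cite the Kedlaya--Liu reference directly for this technical point.
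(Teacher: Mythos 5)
The paper does not prove this proposition at all: it is imported verbatim, with the bracketed references to Fontaine and to Kedlaya--Liu serving as the proof. So the only comparison available is with the standard argument of those sources, and your outline follows exactly that argument: define $\theta$ by extending the multiplicative untilting map via the universal property of Witt vectors of a perfect $\F_p$-algebra, prove surjectivity by successive approximation using surjectivity of Frobenius on $\powboundedring{\genperfdring}/p$ together with $p$-adic completeness, and show the kernel is generated by a single primitive element of degree one. The construction of $\theta$ and the surjectivity step are correct as you state them.

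The principality step, however, is garbled at exactly the point that matters. The Teichm\"uller expansion of $p$ in $\witt(\tilt{\powboundedring{\genperfdring}})$ is $p$ itself, so in your notation $c_0=0$ and $c_1=1$; your element $\xi=p-[c_0]$ is then just $p$, and $\theta(\xi)=p\neq 0$, so it does not even lie in the kernel. The correct choice is an element $c\in\powboundedring{\tilt{\genperfdring}}$ with $\untilt{c}=p$: such a $c$ exists here because $\FFresfield$ is algebraically closed and complete, so $\powboundedring{\genperfdring}$ receives a compatible system of $p$-power roots of $p$ from $\integerring{\FFresfield}$. Then $\xi=p-[c]$ lies in $\ker\theta$ and is primitive of degree one (its constant Teichm\"uller coefficient is topologically nilpotent, and the coefficient of $p$ is a unit), and with this $\xi$ your iterative division argument, using $p$-adic completeness and the perfectoid hypothesis on $\genperfdring$, is the standard proof that $\ker\theta=(\xi)$. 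Note also that your fallback ``faithfully flat descent along $\genperfdring\to\genperfdring\widehat{\otimes}_{\FFresfield}\FFresfield$'' is vacuous, since that map is the identity; the actual reduction runs the other way around: one writes down the generator over the base, i.e.\ in $\Ainf=\witt(\integerring{\tilt{\FFresfield}})$, and verifies that it still generates the kernel after base change to $\witt(\tilt{\powboundedring{\genperfdring}})$, which is precisely the content of the cited Kedlaya--Liu lemma.
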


\begin{defn}
Let $\genperfdring$ be a perfectoid algebra over $\FFresfield$. Choose a generator $\gendRunif$ of the kernel of the map $\witt(\tilt{\powboundedring{\genperfdring}}) \surj \powboundedring{\genperfdring}$ in Proposition \ref{relative fontaine map}. We write $\BdR^+(\genperfdring)$ for the $\gendRunif$-adic completion of $\witt(\tilt{\powboundedring{\genperfdring}})[1/p]$, and define the \emph{de Rham period ring} associated to $\genperfdring$ by $\BdR(\genperfdring):= \BdR^+(\genperfdring)[1/\gendRunif]$.
\end{defn}

\begin{prop}[{\cite[Proposition 2.17]{Fontaine_BdR}}]\label{BdR as a discrete valued field} The ring $\BdR(\FFresfield)$ is a discretely valued field with valuation ring $\BdR^+(\FFresfield)$ and residue field $\FFresfield$. 
\end{prop}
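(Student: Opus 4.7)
The plan is to show $\BdR^+(\FFresfield)$ is a complete DVR by exhibiting the uniformizer $\gendRunif$ and residue field $\FFresfield$ directly through the Fontaine map.

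First, I would note that $A := \witt(\tilt{\powboundedring{\FFresfield}})$ is an integral domain, since $\tilt{\powboundedring{\FFresfield}}$ is the valuation ring of the characteristic $p$ algebraically closed field $\tilt{\FFresfield}$; in particular $A$ is $p$-torsion free, so $A[1/p]$ is also a domain. Since $\theta(p) = p \neq 0$ in $\FFresfield$, the Fontaine map of Proposition \ref{relative fontaine map} extends uniquely to a surjection $\theta: A[1/p] \surj \FFresfield$, and its kernel remains the principal ideal $(\gendRunif)$. Thus $\gendRunif$ is a non-zero-divisor in $A[1/p]$ and $A[1/p]/(\gendRunif) \cong \FFresfield$.

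Next, I would analyze the quotients $A[1/p]/(\gendRunif^n)$ via the short exact sequences
\[ 0 \to A[1/p]/(\gendRunif^{n-1}) \xrightarrow{\;\gendRunif\;} A[1/p]/(\gendRunif^n) \to \FFresfield \to 0, \]
which show inductively that $A[1/p]/(\gendRunif^n)$ is a local Artinian $\FFresfield$-algebra of length $n$ with maximal ideal $(\gendRunif)$. Passing to the inverse limit gives $\BdR^+(\FFresfield) = \varprojlim_n A[1/p]/(\gendRunif^n)$ with $\BdR^+(\FFresfield)/(\gendRunif^n) \cong A[1/p]/(\gendRunif^n)$ for every $n$; in particular $\BdR^+(\FFresfield)$ is $\gendRunif$-torsion free (each $A[1/p]/(\gendRunif^n)$ has finite length over $\FFresfield$, so the Mittag-Leffler condition holds and the displayed sequence remains exact in the limit), and $\bigcap_n (\gendRunif^n) = 0$ by construction of the completion.

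Finally, I would verify the DVR structure. An element $x \in \BdR^+(\FFresfield)$ whose image in $\FFresfield$ is nonzero reduces to a unit modulo $(\gendRunif^n)$ for every $n$ by successive Hensel-type lifting of inverses, and these inverses are compatible under the transition maps, hence give an inverse of $x$ in the inverse limit; this makes $\BdR^+(\FFresfield)$ local with principal maximal ideal $(\gendRunif)$. Together with $\gendRunif$ being a non-zero-divisor and the separation $\bigcap_n (\gendRunif^n) = 0$, this gives the structure of a DVR with uniformizer $\gendRunif$ and residue field $\FFresfield$, whose fraction field is by definition $\BdR(\FFresfield) = \BdR^+(\FFresfield)[1/\gendRunif]$. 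The main point requiring care will be the identification in the second step — specifically, that $\BdR^+(\FFresfield)/(\gendRunif^n) \cong A[1/p]/(\gendRunif^n)$ and that $\gendRunif$ remains a non-zero-divisor after completion. This is the standard behavior of $\gendRunif$-adic completion on systems with Artinian (hence Mittag-Leffler) quotients, but it is the one place where the argument depends on more than formal manipulation.
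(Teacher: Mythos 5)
The paper itself offers no proof of this proposition --- it is quoted directly from Fontaine's notes --- so there is nothing internal to compare against; judged on its own, your argument is the standard proof and its structure is sound: localize the Fontaine map to a surjection $\witt(\tilt{\powboundedring{\FFresfield}})[1/p] \surj \FFresfield$ with principal kernel $(\gendRunif)$, use that the source is a domain to control the quotients modulo $\gendRunif^n$, identify the $\gendRunif$-adic completion's quotients with these, and conclude that $\BdR^+$ is a $\gendRunif$-adically separated local ring whose maximal ideal is generated by a non-zero-divisor, hence a DVR with residue field $\FFresfield$ and fraction field $\BdR = \BdR^+[1/\gendRunif]$.

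Three points should be tightened. First, ``$\gendRunif$ is a non-zero-divisor'' needs $\gendRunif \neq 0$, i.e.\ $\ker\theta \neq 0$, which you never verify; it is easy (for instance, if $\theta$ were injective it would identify $\witt(\tilt{\powboundedring{\FFresfield}})$ with $\powboundedring{\FFresfield}$, yet the former is a domain modulo $p$ while $\powboundedring{\FFresfield}/p$ has zero divisors), but it must be said, since otherwise the statement itself could fail. Second, your Mittag--Leffler justification via ``finite length over $\FFresfield$'' is both unnecessary and shaky: $A[1/p]/(\gendRunif^n)$ carries no evident $\FFresfield$-algebra structure (a coefficient field exists only a posteriori), whereas ML is automatic here because the transition maps of the relevant inverse systems are surjective; alternatively, the identification $\BdR^+/(\gendRunif^n) \cong A[1/p]/(\gendRunif^n)$ and the $\gendRunif$-torsion-freeness of the completion hold for adic completion along any finitely generated (here principal) ideal, with no Noetherian hypothesis. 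Third, the last step deserves one more line: in a local ring whose maximal ideal is generated by a non-zero-divisor $\gendRunif$ with $\bigcap_n (\gendRunif^n) = 0$, every nonzero element factors as a unit times $\gendRunif^n$ (take $n$ maximal with $x \in (\gendRunif^n)$); this is what actually produces the discrete valuation and the identification of $\BdR$ with $\Frac(\BdR^+)$. With these patches your proposal is a complete and correct proof.
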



We will henceforth write $\BdR:= \BdR(\FFresfield)$ and $\BdR^+:= \BdR^+(\FFresfield)$. We also fix a uniformizer $\gendRunif$ of $\BdR$ 
in light of Proposition \ref{BdR as a discrete valued field}. 

\begin{defn}
The \emph{$\BdR^+$-Grassmannian} is the functor $\Gr_G$ that associates to each perfectoid affinoid algebra $(\genperfdring, \genperfdring^+)$ over $\FFresfield$ the set of pairs $(\gentorsor, \gentrivialization)$ consisting of a $G$-torsor $\gentorsor$ over $\Spec(\BdR^+(\genperfdring))$ and a trivialization $\gentrivialization$ of $\gentorsor$ over $\Spec(\BdR(\genperfdring))$. 
\end{defn}

\begin{prop}[{\cite[Proposition 19.1.2]{SW_berkeley}}]\label{points of BdR-Grassmannian}
There exists a natural identification 
\[\Gr_G(\FFresfield) \cong G(\BdR)/G(\BdR^+).\]
\end{prop}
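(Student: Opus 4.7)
The plan is to reduce the problem to a single statement: every $G$-torsor on $\Spec(\BdR^+)$ is trivial. Once triviality is established, the identification with $G(\BdR)/G(\BdR^+)$ is a tautology coming from the definition of the $\BdR^+$-Grassmannian. So the main task is to prove the triviality claim; everything else is book-keeping.

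First I would set up the bijection. A $\FFresfield$-point of $\Gr_G$ is a pair $(\gentorsor, \gentrivialization)$ where $\gentorsor$ is a $G$-torsor on $\Spec(\BdR^+)$ and $\gentrivialization: \gentorsor\vert_{\Spec(\BdR)} \xrightarrow{\sim} G\times\Spec(\BdR)$ is a trivialization. Assuming $\gentorsor$ is trivial, pick a trivialization $\tau: \gentorsor \xrightarrow{\sim} G\times\Spec(\BdR^+)$. Then $\gentrivialization\circ(\tau\vert_{\Spec(\BdR)})^{-1}$ is an automorphism of the trivial $G$-torsor over $\Spec(\BdR)$, hence an element $g\in G(\BdR)$. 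A different choice of $\tau$ differs by right multiplication by an element of $G(\BdR^+)$, and this alters $g$ within its coset in $G(\BdR)/G(\BdR^+)$. Conversely, any $g\in G(\BdR)$ produces such a pair by taking $\gentorsor$ trivial and $\gentrivialization$ given by left-multiplication by $g$. Thus the map $(\gentorsor,\gentrivialization)\mapsto g$ descends to a bijection with $G(\BdR)/G(\BdR^+)$, provided every torsor is trivial.

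The key step is therefore to show that every $G$-torsor on $\Spec(\BdR^+)$ is trivial. By Proposition \ref{BdR as a discrete valued field}, $\BdR^+$ is a complete discrete valuation ring with algebraically closed residue field $\FFresfield$ and uniformizer $\gendRunif$. Since $G$ is smooth, $G$-torsors on $\Spec(\BdR^+)$ are classified by $H^1_{\et}(\Spec(\BdR^+), G)$, and smoothness together with $\gendRunif$-adic completeness allows us to lift a trivialization of the special fiber to one over $\Spec(\BdR^+)$ by a Hensel-type argument (iteratively lifting modulo $\gendRunif^{n+1}$ from modulo $\gendRunif^n$ using the smoothness of $G$). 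This reduces the problem to showing $H^1_{\et}(\Spec(\FFresfield), G)$ vanishes, which holds by Steinberg's theorem since $\FFresfield$ is algebraically closed (and in particular of cohomological dimension zero for connected reductive groups).

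The main obstacle is the triviality statement itself; the rest is formal. Once triviality of $G$-torsors on $\Spec(\BdR^+)$ is established, the identification with $G(\BdR)/G(\BdR^+)$ is immediate from unwinding the definition. I expect the Hensel-lifting step, which requires smoothness of $G$ and the $\gendRunif$-adic completeness of $\BdR^+$, to be the only nontrivial input, and it is a standard reduction recorded in Scholze--Weinstein \cite{SW_berkeley}.
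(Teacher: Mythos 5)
Your proposal is correct and follows essentially the same route as the cited source (the paper gives no proof of its own, deferring to \cite[Proposition 19.1.2]{SW_berkeley}): the whole content is that every $G$-torsor over $\Spec(\BdR^+)$ is trivial, which follows since $\BdR^+$ is a complete (hence Henselian) discrete valuation ring with algebraically closed residue field $\FFresfield$, smoothness of $G$ reducing to the special fiber and algebraic closedness killing $H^1_{\et}(\Spec(\FFresfield), G)$, after which the identification with $G(\BdR)/G(\BdR^+)$ is formal book-keeping exactly as you describe. (Minor remark: invoking Steinberg's theorem is more than needed, since over an algebraically closed field a torsor under a smooth group scheme is a nonempty smooth scheme and hence has a rational point.)
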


\begin{remark}
In fact, we can naturally identify $\Gr_G$ as the \'etale sheafification of the functor that associates to each perfectoid affinoid algebra $(\genperfdring, \genperfdring^+)$ over $\FFresfield$ the coset $G(\BdR(\genperfdring))/G(\BdR^+(\genperfdring))$. 
\end{remark}

\begin{prop}[{\cite[Corollary 19.3.4]{SW_berkeley}}]\label{schubert cells as diamonds}
Given $\gencochar \in \domcocharset$, there exists a locally spatial diamond $\Gr_{G, \gencochar}$ with
\[ \Gr_{G, \gencochar}(\FFresfield) =  G(\BdR^+)\gencochar(\gendRunif)^{-1}G(\BdR^+)/G(\BdR^+).\]
\end{prop}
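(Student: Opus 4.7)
The plan is to construct $\Gr_{G, \gencochar}$ as a locally closed sub-diamond of $\Gr_G$ by first building the closed Schubert variety $\Gr_{G, \leq \gencochar}$ and then removing the smaller closed strata. First I would reduce to the case $G = \GL_n$ by choosing a faithful representation $\rho: G \hookrightarrow \GL_n$: this induces a closed immersion of v-sheaves $\Gr_G \hookrightarrow \Gr_{\GL_n}$ compatibly with Schubert stratifications (the $G(\BdR^+)$-orbit of $\gencochar(\gendRunif)^{-1}$ maps into a finite union of Schubert cells in $\Gr_{\GL_n}$ indexed by Weyl-conjugates of $\rho \circ \gencochar$), so that it suffices to realize $\Gr_{\GL_n, \gencochar}$ as a locally spatial diamond and then intersect back with $\Gr_G$.

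For $\GL_n$, I would use the lattice description provided by Proposition \ref{points of BdR-Grassmannian}: on $\FFresfield$-points $\Gr_{\GL_n}$ parametrizes $\BdR^+$-lattices $\Lambda \subset \BdR^n$, and the Schubert cell $\Gr_{\GL_n, \gencochar}(\FFresfield)$ consists of those lattices whose elementary divisors with respect to the standard lattice are exactly $(\gendRunif^{-\gencochar_1}, \dots, \gendRunif^{-\gencochar_n})$. Define $\Gr_{\GL_n, \leq \gencochar}$ as the subfunctor whose $(\genperfdring, \genperfdring^+)$-points are pairs $(\gentorsor, \gentrivialization)$ whose induced $\BdR^+(\genperfdring)$-lattice has invariant factors bounded by $\gencochar$ in the dominance order. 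The key step is to show that $\Gr_{\GL_n, \leq \gencochar}$ is representable by a proper spatial diamond. This is done by covering the lattices in question by a positive-loop-group orbit on a large jet space: concretely, for $N$ sufficiently large, any such lattice $\Lambda$ satisfies $\gendRunif^N \BdR^{+,n} \subset \Lambda \subset \gendRunif^{-N} \BdR^{+,n}$, and the functor sending $(\genperfdring, \genperfdring^+)$ to the finite locally free $\powboundedring{\genperfdring}$-module $\gendRunif^{-N}\BdR^+(\genperfdring)^n / \gendRunif^N \BdR^+(\genperfdring)^n$ of fixed rank is pro-represented by an affinoid perfectoid space, and $\Gr_{\GL_n, \leq \gencochar}$ is cut out as a closed condition therein.

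Having established that $\Gr_{\GL_n, \leq \gencochar}$ is a proper spatial diamond, I would then define
\[\Gr_{\GL_n, \gencochar} := \Gr_{\GL_n, \leq \gencochar} \setminus \bigcup_{\gencochar' < \gencochar} \Gr_{\GL_n, \leq \gencochar'},\]
which is an open sub-diamond of a spatial diamond and hence locally spatial. Its $\FFresfield$-points are precisely the lattices of relative position exactly $\gencochar$, i.e.\ the coset $\GL_n(\BdR^+) \gencochar(\gendRunif)^{-1}\GL_n(\BdR^+)/\GL_n(\BdR^+)$ as required. Finally, $\Gr_{G, \gencochar}$ is obtained by pulling back along the closed immersion $\Gr_G \hookrightarrow \Gr_{\GL_n}$ and intersecting with the appropriate union of Schubert cells, using that the preimage of the desired $\GL_n(\BdR^+)$-double coset agrees with the $G(\BdR^+)$-double coset of $\gencochar(\gendRunif)^{-1}$ on geometric points by a Cartan-decomposition argument.

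The main obstacle is Step 2, the representability of the closed Schubert variety $\Gr_{\GL_n, \leq \gencochar}$ as a proper spatial diamond; this requires constructing the needed charts via the jet-space/$\BdR^+$-affine-Grassmannian formalism and checking the standard v-sheaf gluing, closedness, and spatiality conditions. Once this is in place, the passage from the closed Schubert variety to the open stratum is formal, and the reduction from $G$ to $\GL_n$ is straightforward.
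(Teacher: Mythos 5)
The paper does not prove this proposition at all: it is quoted verbatim from Scholze--Weinstein \cite{SW_berkeley} (Corollary 19.3.4), so your proposal has to be measured against the proof given there. Your overall architecture --- reduce to $\GL_n$ via a faithful representation inducing a closed immersion $\Gr_G \hookrightarrow \Gr_{\GL_n}$, establish that the closed Schubert variety $\Gr_{\GL_n, \leq \gencochar}$ is a proper spatial diamond, define $\Gr_{\GL_n, \gencochar}$ as the open complement of the smaller closed strata, and identify the $\FFresfield$-points with the double coset via the Cartan decomposition over $\BdR$ --- is indeed the architecture of that proof, and the first and last steps, while asserted rather than proved (the closed immersion $\Gr_G \hookrightarrow \Gr_{\GL_n}$ itself requires an argument), are standard.

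The genuine gap is precisely at the step you flag as the crux, and it is not merely incomplete but wrong as written. The quotient $\gendRunif^{-N}\BdR^+(\genperfdring)^n/\gendRunif^{N}\BdR^+(\genperfdring)^n$ is a finite free module over $\BdR^+(\genperfdring)/\gendRunif^{2N}$, hence a Banach $\genperfdring$-module, not a finite locally free $\powboundedring{\genperfdring}$-module; and the moduli of bounded lattices $\gendRunif^{N}\Lambda_0 \subset \Lambda \subset \gendRunif^{-N}\Lambda_0$ is certainly not pro-represented by an affinoid perfectoid space, nor is $\Gr_{\GL_n,\leq\gencochar}$ a closed subfunctor of one: already for minuscule $\gencochar$ the bounded Schubert variety is the diamond of the proper flag variety $\flagvar(\GL_n,\gencochar)$, which is proper and nowhere near affinoid. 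So ``cut out as a closed condition in an affinoid perfectoid chart'' cannot deliver spatiality or properness, and no substitute mechanism is offered. In \cite{SW_berkeley} this step is carried out differently: for minuscule $\gencochar$ one identifies $\Gr_{\GL_n,\leq\gencochar}=\Gr_{\GL_n,\gencochar}$ with the diamond of the usual Grassmannian/flag variety (lattices between $\Lambda_0$ and $\gendRunif\Lambda_0$ correspond to finite projective quotients of $\genperfdring^n$), and for general $\gencochar$ one writes $\gencochar$ as a sum of minuscule cocharacters and uses the convolution (Demazure-type) Grassmannian, an iterated fibration in minuscule Schubert varieties, which surjects properly onto $\Gr_{\GL_n,\leq\gencochar}$; spatiality and properness are then descended along this surjection, and the case of general $G$ follows by the closed embedding into $\Gr_{\GL_n}$. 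Without these ideas (or an equivalent replacement), your outline does not yield the representability statement, so the central step of the proof is missing.
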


\begin{remark}
In this paper, we won't use the language of diamonds in an essential way because we are only interested in the $\FFresfield$-valued points of $\Gr_G$ and $\Gr_{G, \gencochar}$.
\end{remark}

\begin{defn}
Let $\gencochar$ be a dominant cocharacter of $G$. 
\begin{enumerate}[label=(\arabic*)]
\item We refer to the locally spatial diamond $\Gr_{G, \gencochar}$ in Proposition \ref{schubert cells as diamonds} as the \emph{Schubert cell} of $\Gr_G$ associated to $\gencochar$. 
\smallskip

\item We define the \emph{parabolic subgroup of $G$ associated to $\gencochar$} by
\[ \genparabolic_\gencochar := \{ g \in G: \lim_{\gendRunif \to 0} \gencochar(t) g \gencochar(t)^{-1} \text{ exists} \}.\]

\item We define the \emph{flag variety} associated to the pair $(G, \gencochar)$ by
\[\flagvar(G, \gencochar) := G/\genparabolic_\gencochar.\]

\item We define the \emph{Bialynicki-Birula map} associated to $\gencochar$ as the map 
\[\BBmap{\gencochar}: \Gr_{G, \gencochar}(\FFresfield) \longrightarrow \flagvar(G, \gencochar)(\FFresfield)\] 
which associates to $g\gencochar(\gendRunif)^{-1}G(\BdR^+) \in \Gr_{G, \gencochar}(\FFresfield)$ the parabolic subgroup $\genred{g}\genparabolic_\gencochar\genred{g}^{-1}$, where $\genred{g}$ denotes the image of $g$ under the natural map $G(\BdR^+) \to G(\FFresfield)$. 
\end{enumerate}
\end{defn}


\begin{prop}[{\cite[Theorem 3.4.5]{CS_cohomunitaryshimura}}]\label{minuscule BB map is an isom}
If $\gencochar$ is a minuscule cocharacter of $G$, the Bialynicki-Birula map $\BBmap{\gencochar}$ is bijective. 
\end{prop}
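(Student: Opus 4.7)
The plan is to prove bijectivity directly from the double coset description in Proposition~\ref{schubert cells as diamonds}, exploiting the minuscule hypothesis through the big cell decomposition of $G$. The minuscule assumption yields a weight decomposition $\mathfrak{g} = \mathfrak{g}_{-1} \oplus \mathfrak{g}_0 \oplus \mathfrak{g}_1$ under $\Ad\circ\gencochar$, so $\mathfrak{p}_\gencochar = \mathfrak{g}_0 \oplus \mathfrak{g}_1$ and the unipotent radicals $U_\gencochar^\pm$ of $\genparabolic_\gencochar$ and its opposite parabolic are abelian vector groups with Lie algebras $\mathfrak{g}_{\pm 1}$ (abelian since $[\mathfrak{g}_{\pm 1}, \mathfrak{g}_{\pm 1}] \subseteq \mathfrak{g}_{\pm 2} = 0$). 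Surjectivity is then immediate: given a flag $\genred{g}\genparabolic_\gencochar\genred{g}^{-1}$ with $\genred{g} \in G(\FFresfield)$, smoothness of $G$ together with the fact that $\BdR^+$ is a complete discrete valuation ring (Proposition~\ref{BdR as a discrete valued field}) lets us lift $\genred{g}$ to some $g \in G(\BdR^+)$, and $g\gencochar(\gendRunif)^{-1}G(\BdR^+) \in \Gr_{G,\gencochar}(\FFresfield)$ is sent to $\genred{g}\genparabolic_\gencochar\genred{g}^{-1}$ by the definition of $\BBmap{\gencochar}$.

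For injectivity, suppose $g_1\gencochar(\gendRunif)^{-1}G(\BdR^+)$ and $g_2\gencochar(\gendRunif)^{-1}G(\BdR^+)$ with $g_i \in G(\BdR^+)$ have the same $\BBmap{\gencochar}$-image; setting $h := g_1^{-1}g_2 \in G(\BdR^+)$ then gives $\genred{h} \in \genparabolic_\gencochar$, and the two Schubert cosets agree if and only if $\gencochar(\gendRunif) h \gencochar(\gendRunif)^{-1} \in G(\BdR^+)$. To verify this, observe that the multiplication map $U_\gencochar^- \times \genparabolic_\gencochar \to G$ is an open immersion onto a big cell $V$ containing $\genparabolic_\gencochar$; since $\Spec(\BdR^+)$ has only a closed and a generic point and $\genred{h} \in V$, the morphism $\Spec(\BdR^+) \to G$ defined by $h$ must factor through $V$ (otherwise its preimage in $G \setminus V$ would be a nonempty closed subscheme of $\Spec(\BdR^+)$ not containing the closed point, which is impossible). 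Thus $h = u p$ uniquely with $u \in U_\gencochar^-(\BdR^+)$ and $p \in \genparabolic_\gencochar(\BdR^+)$, and uniqueness of the big cell decomposition in $G(\FFresfield)$ forces $\genred{u} = 1$.

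It then remains to check that conjugation by $\gencochar(\gendRunif)$ preserves $G(\BdR^+)$ on each factor. For $p$, the splitting $\genparabolic_\gencochar = \genlevi_\gencochar \ltimes U_\gencochar$ has $\genlevi_\gencochar$ centralizing $\gencochar$, while on the vector group $U_\gencochar \cong \mathfrak{g}_1$ the conjugation acts by multiplication by $\gendRunif \in \BdR^+$; hence $\gencochar(\gendRunif) p \gencochar(\gendRunif)^{-1} \in \genparabolic_\gencochar(\BdR^+)$. For $u$, identify $U_\gencochar^- \cong \mathfrak{g}_{-1}$ and write $u$ as a vector $V \in \mathfrak{g}_{-1} \otimes \BdR^+$; the condition $\genred{u} = 1$ forces $V \in \gendRunif \cdot (\mathfrak{g}_{-1} \otimes \BdR^+)$, so $\gencochar(\gendRunif) u \gencochar(\gendRunif)^{-1}$ corresponds to $\gendRunif^{-1}V$, which still lies in $\mathfrak{g}_{-1} \otimes \BdR^+$. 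Multiplying, $\gencochar(\gendRunif) h \gencochar(\gendRunif)^{-1} \in G(\BdR^+)$, as desired. The main obstacle---and the reason the minuscule hypothesis is essential---is this last cancellation: the single pole $\gendRunif^{-1}$ arising from $\gencochar(\gendRunif)$-conjugation on $U_\gencochar^-$ is exactly absorbed by the simple zero coming from $\genred{u} = 1$, whereas if $\gencochar$ had weights of absolute value $\geq 2$ on $\mathfrak{g}$, higher-order poles would appear that a simple zero could not clear.
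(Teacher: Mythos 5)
Your proof is correct, but note that there is nothing internal to compare it against: the paper does not prove this proposition, it simply quotes Caraiani--Scholze \cite[Theorem 3.4.5]{CS_cohomunitaryshimura}. What you give is the standard direct argument at the level of $\FFresfield$-points, and it is sound: surjectivity follows from smoothness of $G$ and $t$-adic completeness of $\BdR^+$ (so $G(\BdR^+)\to G(\FFresfield)$ is surjective); for injectivity, the reduction $\genred{h}$ lies in $\genparabolic_\gencochar(\FFresfield)$ (this implicitly uses that parabolic subgroups are self-normalizing, worth saying), the factorization of $h\colon \Spec(\BdR^+)\to G$ through the big cell $U_\gencochar^-\times\genparabolic_\gencochar$ is justified exactly as you say because $\Spec(\BdR^+)$ is local, and the minuscule hypothesis enters precisely where you locate it: conjugation by $\gencochar(\gendRunif)$ scales $U_\gencochar^-\simeq\mathfrak{g}_{-1}$ by $\gendRunif^{-1}$ only (weights bounded by $1$ in absolute value), and this simple pole is absorbed by the simple zero forced by $\genred{u}=1$; the identification of $U_\gencochar^{\pm}$ with $\mathfrak{g}_{\pm1}$ via the exponential is legitimate since $\genlocfield$ has characteristic zero. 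Two remarks on scope: first, the cited Caraiani--Scholze result is stronger than what you prove --- it is an isomorphism between $\Gr_{G,\gencochar}$ and the diamond attached to $\flagvar(G,\gencochar)$, i.e.\ a statement functorial in perfectoid test objects, whereas your argument only yields a bijection on $\FFresfield$-points; however, that pointwise statement is exactly what the proposition as formulated here asserts, and the paper explicitly works only with $\FFresfield$-points, so your more elementary route suffices for every use made of it in this paper. Second, strictly speaking the well-definedness of $\BBmap{\gencochar}$ (independence of the chosen representative $g\in G(\BdR^+)$) is part of the same circle of ideas; you may take it as given by the paper's definition, but it can be checked by a computation symmetric to your injectivity step.
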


\subsection{$G$-bundles on the Fargues-Fontaine curve}$ $

\begin{defn}\label{adicFFC} Fix a uniformizer $\uniformizer$ of $\genlocfield$ and a pseudouniformizer $\pseudounif$ of $\tilt{\FFresfield}$. Let $q$ be the number of elements in the residue field of $\genlocfield$. 

\begin{enumerate}[label=(\arabic*)]
\item We set
\[ \Ycal:= \Spa(\witt_{\integerring{\genlocfield}}(\integerring{\tilt{\FFresfield}})\setminus\{|\uniformizer [\pseudounif]|=0\},\]
where we write $\witt_{\integerring{\genlocfield}}(\integerring{\tilt{\FFresfield}}): = \witt(\integerring{\tilt{\FFresfield}}) \otimes_{\witt(\F_q)} \integerring{\genlocfield}$ for the ring of ramified Witt vectors over $\integerring{\tilt{\FFresfield}}$ with coefficients in $\integerring{\genlocfield}$ and the Teichmuller lift $[\pseudounif]$ of $\pseudounif$, and define the \emph{adic Fargues-Fontaine curve} associated to the pair $(\genlocfield, \tilt{\FFresfield})$ by
\[ \adicff:= \Ycal/\genfrob^\Z\]
where $\genfrob$ denotes the automorphism of $\Ycal$ induced by the $q$-Frobenius automorphism on $\witt_{\integerring{\genlocfield}}(\integerring{\tilt{\FFresfield}})$.
\smallskip

\item We define the \emph{schematic Fargues-Fontaine curve} associated to the pair $(\genlocfield, \tilt{\FFresfield})$ by
\[
\schff:= \Proj \left( \bigoplus_{n \geq 0 } H^0(\Ycal, \strsheaf[\Ycal])^{\genfrob = \uniformizer^n} \right).
\]
\end{enumerate}
\end{defn}

\begin{remark}
The definition of the adic Fargues-Fontaine curve relies on the fact that the action of $\genfrob$ on $\Ycal$ is properly discontinuous. 
\end{remark}

\begin{theorem}[{\cite[Theorem 4.10]{Kedlaya_noeth}}, {\cite[Th\'eor\`eme 6.5.2
]{FF_curve}}, {\cite[Theorem 8.7.7]{KL_relpadic1}}]\label{geom properties of FF curve} 
We have the following statements:
\begin{enumerate}[label=(\arabic*)]
\item $\adicff$ is a Noetherian adic space over $\genlocfield$. 

\item $\schff$ is a Dedekind scheme over $\genlocfield$. 
\smallskip

\item\label{GAGA for FF curve} There exists an equivalence of the categories of vector bundles on $\adicff$ and $\schff$, induced by pullback along a natural map of locally ringed spaces 
$\adicff \longrightarrow \schff$.
\end{enumerate}
\end{theorem}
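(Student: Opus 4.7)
The plan is to treat the three assertions separately, each relying on one of the cited foundational works.

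For (1), the approach is to cover $\Ycal$ by ``annular'' rational opens $\Ycal_{[s,r]}$ defined by bounds of the form $|\uniformizer|^r \le |[\pseudounif]| \le |\uniformizer|^s$ for rationals $0 < s \le r$, and to show that on each such annulus the sections of the structure presheaf form a sheafy, strongly Noetherian Banach $\genlocfield$-algebra. This is the content of the cited theorem of Kedlaya and constitutes the main technical obstacle. Once Noetherianness of $\Ycal$ is in hand, one observes that $\genfrob$ scales the radial parameter by a factor of $q$, so a fundamental domain for the $\genfrob^\Z$-action is covered by finitely many such annuli, and Noetherianness of $\adicff = \Ycal/\genfrob^\Z$ follows by descent along the properly discontinuous action.

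For (2), one sets $P := \bigoplus_{n \ge 0} H^0(\Ycal, \strsheaf[\Ycal])^{\genfrob = \uniformizer^n}$, so that $\schff = \Proj(P)$. The strategy is to check that $P$ is a graded integral domain and that, for each untilt of $\tilt{\FFresfield}$, there is a degree-one element $t \in P_1$ whose vanishing locus on $\adicff$ is exactly the corresponding classical point. The basic opens $D_+(t) = \Spec(P[t^{-1}]_0)$ then cover $\schff$ with each $P[t^{-1}]_0$ a principal ideal domain, and regularity at each closed point follows from identifying the completed local ring with $\BdR^+$, which is a complete discrete valuation ring by Proposition~\ref{BdR as a discrete valued field}. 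Together these give that $\schff$ is a Dedekind scheme over $\genlocfield$.

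For (3), the map $\adicff \to \schff$ is constructed by sending each rational open $\{|t| \ne 0\}/\genfrob^\Z \subset \adicff$ to $D_+(t) \subset \schff$ via the natural ring map $P[t^{-1}]_0 \to \strsheaf[\adicff](\{|t| \ne 0\}/\genfrob^\Z)$, and by checking compatibility on overlaps. The equivalence of vector bundle categories is then proved via Beauville-Laszlo style gluing: on the scheme side, a vector bundle is determined by its restriction to the complement of a classical point together with $\BdR^+$-lattice data at that point, and the same description holds on the adic side with the same completed local ring. Full faithfulness of pullback is checked by comparing global sections over affine opens, which match on both sides. The hardest step, carried out in Kedlaya-Liu, is essential surjectivity: given a vector bundle on $\adicff$, one must algebraize it, which reduces via the Harder-Narasimhan formalism to the semistable case and ultimately to the observation that the twisting line bundles $\Ocal(n)$ for $n \in \Z$ are visibly algebraic on both sides.
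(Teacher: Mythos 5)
This theorem is not proved in the paper at all: it is imported with citations to Kedlaya, Fargues--Fontaine, and Kedlaya--Liu, so there is no internal argument to compare your proposal against. Read as a summary of those external proofs, your outline is broadly consistent with the sources: Kedlaya's cited theorem is indeed the strong Noetherianness (hence sheafiness) of the rings of the annuli covering $\Ycal$, with Noetherianness of $\adicff$ following because a fundamental domain for the properly discontinuous $\genfrob$-action is covered by finitely many such affinoids; and the Dedekind property of $\schff = \Proj(P)$ is obtained exactly as you say, by covering $\schff$ with opens $D_+(t) = \Spec(P[1/t]_0)$ for degree-one elements $t$, where $P[1/t]_0$ is a principal ideal domain and the completed local ring at the remaining closed point is the discrete valuation ring $\BdR^+$.

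Two caveats, since your text is framed as a proof rather than a citation. First, the steps you present as checks are precisely the deep content of the theory: the existence of enough degree-one elements of $P$ whose zero loci realize prescribed untilts (equivalently the description of the closed points), the fact that $P[1/t]_0$ is a PID, and sheafiness of the structure presheaf are not routine verifications, and your sketch supplies no argument for them beyond deferring to the references --- which is acceptable here only because the paper itself treats the whole statement as a black box. Second, your account of part (3) does not match the cited proof and, taken on its own, is circular: Kedlaya--Liu establish essential surjectivity by relating vector bundles on $\adicff$ to $\varphi$-modules over (relative) Robba rings and invoking the slope/classification theory there, not by a direct Beauville--Laszlo gluing argument; and your proposed reduction ``via the Harder--Narasimhan formalism to the semistable case, where the $\Ocal(n)$ are visibly algebraic'' already presupposes the classification of semistable bundles on the adic curve, which is itself the hard theorem being transported. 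So as a pointer to the literature your proposal is fine and plays the same role as the paper's citation, but as a standalone proof the GAGA step has a genuine gap.
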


\begin{remark}
The scheme $\schff$ is not a curve in the usual sense as it is not of finite type over 
$E$. 
\end{remark}

In light of the statement \ref{GAGA for FF curve} in Theorem \ref{geom properties of FF curve}, we will henceforth identify $G$-bundles on $\adicff$ with $G$-bundles on $\schff$.

\begin{defn}\label{G-bundle assoc to an elt of B(G)}
Given an element $b \in B(G)$, we define the associated $G$-bundle $\genvb_b$ on $\adicff$ (or on $\schff)$ by descending along the map $\Ycal \longrightarrow \Ycal/\genfrob^\Z = \adicff$ the trivial $G$-bundle on $\Ycal$ equipped with the $\genfrob$-linear automorphism given by $b$. 
\end{defn}


\begin{theorem}[{\cite[Th\'eor\`eme 5.1]{Fargues_Gbundle}}]\label{classification of G-bundles on FF curve}
The map $B(G) \longrightarrow H_{\et}^1(\schff, G)$ sending $b$ to the isomorphism class of $\genvb_b$ is a bijection. 
\end{theorem}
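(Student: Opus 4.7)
The plan is to prove the bijection in two stages: first treat the case $G = \GL_n$, which amounts to classifying vector bundles on $\schff$; then bootstrap to a general reductive group via a parabolic/Harder--Narasimhan reduction to the basic case.

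For the $\GL_n$ case, one first constructs, for each $\genslope = d/h \in \Q$ in lowest terms, a stable vector bundle $\Ocal(\genslope)$ of rank $h$ and degree $d$ on $\schff$: for $h=1$ this is the twisting sheaf of the graded ring defining $\schff$ in Definition \ref{adicFFC}; in general it is obtained by pushforward from a degree-$h$ unramified cover of $\schff$ (which exists by the theory of untilts). The claim to prove is that every vector bundle $\genvb$ on $\schff$ decomposes as $\bigoplus_i \Ocal(\genslope_i)$, and the multiset $\{\genslope_i\}$ matches the Newton polygon of a unique element of $B(\GL_n)$ under Kottwitz's identification. Since $\schff$ is a Dedekind scheme with a natural degree function (Theorem \ref{geom properties of FF curve}), $\genvb$ admits a Harder--Narasimhan filtration; the heart of the argument is to show this filtration splits, which reduces to the vanishing
\[
\Ext^1(\Ocal(\genslope), \Ocal(\mu)) = 0 \quad \text{for } \genslope < \mu,
\]
and this in turn reduces to $H^1(\schff, \Ocal(\nu)) = 0$ for $\nu > 0$. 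The $H^1$ vanishing follows from the fundamental exact sequence $0 \to \Q_p \to \Be \to \BdR/\BdR^+ \to 0$ (and its slope-$\nu$ analogues via the unramified covers), which identifies the cohomology with classical period-ring quantities.

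For the general reductive case, one establishes Harder--Narasimhan theory for $G$-bundles on $\schff$, assigning to each $G$-bundle $\genvb$ a canonical reduction to a parabolic $\genparabolic \subseteq G$ such that the induced Levi $\genlevi$-bundle $\genvb_\genlevi$ is \emph{semistable}. Semistable $G$-bundles correspond to \emph{basic} elements of $B(G)$, and the basic case of the bijection reduces --- via a faithful representation and Stage 1 --- to the assertion that every semistable vector bundle of slope $0$ is trivial. Combining the parabolic reduction with the basic bijection for $\genlevi$ yields an element of $B(\genlevi)$ whose image in $B(G)$ is the class of $\genvb$; injectivity follows from uniqueness of the HN reduction together with the basic case.

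The main obstacle is the triviality of semistable slope-$0$ vector bundles on $\schff$: through the dictionary between vector bundles on $\schff$ and $\genfrob$-modules over the Robba ring / period rings, this is equivalent to the Colmez--Fontaine theorem that weakly admissible implies admissible, and constitutes the nontrivial $p$-adic Hodge theoretic heart of the classification. A secondary subtlety is showing that the HN reduction for $G$-bundles is well-defined and canonical in this setting; this requires convexity of the Newton map valued in $X_*(\genmaxtorus)_\Q^+$ and a careful analysis in the Weyl chamber, together with Galois-cohomological input to handle inner forms of $\genlevi$ that can arise as automorphism groups of the corresponding $\genvb_b$.
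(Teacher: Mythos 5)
This statement is not proved in the paper at all: it is quoted as Fargues' theorem \cite{Fargues_Gbundle}, and every later argument treats it as a black box. So your proposal can only be measured against the proof in the literature. For the $\GL_n$ stage your outline is a fair summary of the Fargues--Fontaine classification \cite{FF_curve}: construct the stable bundles $\Ocal(\genslope)$ via pushforward from the curves attached to unramified extensions, split the Harder--Narasimhan filtration using $\Ext^1(\Ocal(\genslope),\Ocal(\mu))=0$ for $\genslope<\mu$, which via the tensor-product decomposition of $\Ocal(\genslope)^\vee\otimes\Ocal(\mu)$ reduces to $H^1(\schff,\Ocal(\nu))=0$ for $\nu>0$ and the fundamental exact sequence $0\to\genlocfield\to\Be\to\BdR/\BdR^+\to 0$; and you correctly isolate the genuinely hard input, namely that a semistable bundle of slope $\genslope$ is a direct sum of copies of $\Ocal(\genslope)$, equivalent (through the isocrystal/$\genfrob$-module dictionary) to weakly admissible implies admissible.

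The gap is in the general-$G$ stage. Your claim that ``the basic case of the bijection reduces, via a faithful representation and Stage 1, to the assertion that every semistable vector bundle of slope $0$ is trivial'' is not correct as stated, and it compresses away the hardest part of Fargues' Th\'eor\`eme 5.1. Knowing that the vector bundle associated to a semistable $G$-bundle under a faithful representation is trivial (or a prescribed direct sum of $\Ocal(\genslope)$'s) does not imply that the $G$-bundle itself is of the form $\genvb_b$ with $b$ basic: the discrepancy is measured by a nonabelian cohomology set that must be computed, and this is precisely where the work lies. Fargues' actual argument for the semistable case proceeds by d\'evissage: the case of tori (via an explicit computation of $H^1_{\et}(\schff,T)$), the Steinberg-type theorem that every semistable $G$-bundle is trivial when $G$ is semisimple and simply connected (proved using reductions to Borel subgroups and the vanishing $H^1(\schff,\Ocal(\genslope))=0$ for $\genslope>0$, not by a faithful representation), and then $z$-extensions to pass to general reductive $G$; injectivity additionally requires showing that the Newton and Kottwitz invariants of $b$ are recovered from the bundle $\genvb_b$. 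Your closing remark about ``Galois-cohomological input to handle inner forms of $\genlevi$'' gestures at this, but as written the proposal offers no mechanism for the surjectivity onto semistable $G$-bundles, so the general case remains unproved by your outline.
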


\begin{prop}\label{isocrystals and vector bundles on FF curve}
The set of isomorphism classes of isocrystals over $\compmaxunram{\genlocfield}$ and the set of isomorphism classes of vector bundles on $\schff$ admit a natural bijection which is compatible with direct sums, duals, and ranks.  
\end{prop}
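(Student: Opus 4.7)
The plan is to reduce the statement to the $\GL_n$ case and then invoke Theorem \ref{classification of G-bundles on FF curve}. A vector bundle of rank $n$ on $\schff$ is the same datum as a $\GL_n$-bundle, via the associated bundle construction for the standard representation (this also converts direct sums, duals, and the rank function on vector bundles into the corresponding operations on $\GL_n$-torsors induced by $\GL_n \times \GL_m \to \GL_{n+m}$, by contragredient, and by the standard embedding dimension). Thus it suffices to produce a natural bijection between the set of isomorphism classes of $n$-dimensional isocrystals over $\compmaxunram{\genlocfield}$ and $B(\GL_n)$, compatible with these three operations, and then compose with the bijection of Theorem \ref{classification of G-bundles on FF curve} applied to $G = \GL_n$.

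For the first bijection I would argue as follows. An $n$-dimensional isocrystal $(\genisocrystal, \genfrob)$ over $\compmaxunram{\genlocfield}$ is by definition a finite-dimensional $\compmaxunram{\genlocfield}$-vector space equipped with a $\genfrob$-semilinear automorphism. Choosing a basis of $\genisocrystal$ identifies $\genfrob$ with an element $b \in \GL_n(\compmaxunram{\genlocfield})$, and two choices of basis yield $\genfrob$-conjugate elements; conversely any $b \in \GL_n(\compmaxunram{\genlocfield})$ defines an isocrystal structure on $(\compmaxunram{\genlocfield})^n$. This gives a canonical bijection between isomorphism classes of $n$-dimensional isocrystals and $B(\GL_n)$, which is exactly the standard group-theoretic interpretation of Kottwitz's set for $\GL_n$.

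Composing with Theorem \ref{classification of G-bundles on FF curve} produces the desired bijection sending $(\genisocrystal, \genfrob)$ to $\genvb_b$. The compatibilities are then immediate from Definition \ref{G-bundle assoc to an elt of B(G)}: the bundle $\genvb_b$ is obtained by descending the trivial rank-$n$ bundle on $\Ycal$ along $\Ycal \to \Ycal/\genfrob^\Z = \adicff$ using the $\genfrob$-semilinear automorphism $b$, which is the very same descent one uses to construct vector bundles from isocrystals. Hence if $b \in \GL_n(\compmaxunram{\genlocfield})$ and $b' \in \GL_m(\compmaxunram{\genlocfield})$ represent $(\genisocrystal,\genfrob)$ and $(\genisocrystal',\genfrob')$, then $b \oplus b' \in \GL_{n+m}(\compmaxunram{\genlocfield})$ represents $\genisocrystal \oplus \genisocrystal'$ and $\genvb_{b \oplus b'} \simeq \genvb_b \oplus \genvb_{b'}$; likewise $(b^{-1})^\top$ represents the dual isocrystal and yields $\genvb_b^\vee$; and the rank $n$ is preserved tautologically.

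There is no real obstacle here beyond bookkeeping; the only point that requires a little care is the verification that the association $b \mapsto \genvb_b$ is indeed tensorially functorial in the sense above, which in turn reduces to the observation that the descent datum on $\Ycal$ for a direct sum (respectively dual) of isocrystals agrees with the direct sum (respectively dual) of the individual descent data. Once this naturality is spelled out, the proposition follows.
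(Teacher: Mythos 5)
Your proposal is correct and follows essentially the same route as the paper: identify isomorphism classes of rank-$n$ isocrystals over $\compmaxunram{\genlocfield}$ with $B(\GL_n)$ via Kottwitz, identify rank-$n$ vector bundles on $\schff$ with $B(\GL_n)$ via Theorem \ref{classification of G-bundles on FF curve}, and compose. The paper leaves the compatibility with direct sums, duals, and ranks as a straightforward check, which is exactly the descent-datum bookkeeping you spell out.
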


\begin{proof}
Consider an arbitrary integer $n>0$. Given $b \in B(\GL_n)$, we write $\genisocrystal_b$ for the isocrystal over $\compmaxunram{\genlocfield}$ with underlying vector space $\compmaxunram{\genlocfield}^{\oplus n}$ and the Frobenius-semilinear automorphism given by $b$. As observed by Kottwitz \cite{Kottwitz_isocrystal}, there exists a natural bijection between $B(\GL_n)$ and the set of isomorphism classes of isocrystals over $\compmaxunram{\genlocfield}$ of rank $n$ where $b \in B(\GL_n)$ maps to the isomorphism class of $\genisocrystal_b$. Moreover, Theorem \ref{classification of G-bundles on FF curve} yields a bijection between $B(\GL_n)$ and the set of isomorphism classes of vector bundles over $\schff$ of rank $n$ where $b \in B(\GL_n)$ maps to the isomorphism class of $\genvb_b$. We thus obtain a bijection between the set of isomorphism classes of isocrystals over $\compmaxunram{\genlocfield}$ and the set of isomorphism classes of vector bundles on $\schff$. It is straight forward to check that this bijection is compatible with diret sums, duals, and ranks. 
\end{proof}

\begin{defn}\label{HN polygon of vector bundle on FF curve}
Let $\genvb$ be a vector bundle on $\schff$. We denote by $\genisocrystal(\genvb)$ the isomorphism class of isocrystals over $\compmaxunram{\genlocfield}$ that corresponds to $\genvb$ under the bijection in Proposition \ref{isocrystals and vector bundles on FF curve}. 
\begin{enumerate}[label=(\arabic*)]
\item We write $\rk(\genvb)$ for the rank of $\genvb$, and define the \emph{degree} of $\genvb$, denoted by $\deg(\genvb)$, to be the degree of $\genisocrystal(\genvb)$. 
\smallskip
\item We define the \emph{Harder-Narasimhan (HN) polygon} of $\genvb$ by $\HN(\genvb) := -\Newt(\genisocrystal(\genvb)^\vee)$, where $\Newt(\genisocrystal(\genvb)^\vee)$ refers to the Newton polygon of the dual of $\genisocrystal(\genvb)$.
\smallskip

\item We say that $\genvb$ is \emph{semistable} of slope $\genslope$ if $\HN(\genvb)$ is a line segment of slope $\genslope$. 
\end{enumerate}
\end{defn}

\begin{remark}
The definition of $\HN(\genvb)$ is in line with the convention 
that Newton polygons are convex while Harder-Narasimhan polygons are concave. 
It is also worthwhile to mention that the correct (or usual) definition of semistability should be given in terms of the Harder-Narasimhan formalism for vector bundles on $\schff$; in fact, the equivalence of our definition and the correct definition is due to a highly nontrivial result of Fargues-Fontaine \cite{FF_curve}. 
\end{remark}

\begin{prop}\label{HN decomp of vector bundles on FF curve}
Let $\genvb$ be a vector bundle on $\schff$.
\begin{enumerate}[label=(\arabic*)]
\item $\genvb$ admits a direct sum decomposition $\genvb \simeq \oplus \genvb_i$ where the $\genvb_i$'s are semistable vector bundles on $\schff$ of distinct slopes. 
\smallskip

\item\label{uniqueness of direct summands in HN decomp} If the $\genvb_i$'s are arranged in order of descending slope, $\HN(\genvb)$ is given by the concatenation of the polygons $\HN(\genvb_i)$. 
\end{enumerate}
\end{prop}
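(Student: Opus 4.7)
The plan is to reduce both assertions to the Dieudonn\'e--Manin classification of isocrystals via the bijection in Proposition~\ref{isocrystals and vector bundles on FF curve}.

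First, I would invoke Dieudonn\'e--Manin for the isocrystal $\genisocrystal(\genvb)$ over $\compmaxunram{\genlocfield}$: it admits a canonical direct sum decomposition $\genisocrystal(\genvb) \simeq \bigoplus_i \genisocrystal_i$ into isoclinic summands of pairwise distinct slopes $\mu_i \in \mathbb{Q}$. Let $\genvb_i$ be the vector bundle on $\schff$ corresponding to $\genisocrystal_i$ under Proposition~\ref{isocrystals and vector bundles on FF curve}. Since the bijection is compatible with direct sums, we obtain $\genvb \simeq \bigoplus_i \genvb_i$. To verify that each $\genvb_i$ is semistable of slope $\mu_i$ in the sense of Definition~\ref{HN polygon of vector bundle on FF curve}, I would note that an isoclinic isocrystal of slope $\mu_i$ has Newton polygon equal to the line segment of slope $\mu_i$ from $(0,0)$ to $(\rk \genvb_i, \mu_i \rk \genvb_i)$, and its dual $\genisocrystal_i^\vee$ is again isoclinic, of slope $-\mu_i$; hence $\HN(\genvb_i) = -\Newt(\genisocrystal_i^\vee)$ is a line segment of slope $\mu_i$. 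Since the slopes $\mu_i$ are pairwise distinct by construction, this proves part (1).

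For part (2), I would use compatibility of the bijection with direct sums once more, together with the fact that duality of isocrystals commutes with direct sums and negates slopes. The Newton polygon of a direct sum of isocrystals is the concatenation of the Newton polygons of the summands arranged in order of ascending slope. Applying this to $\genisocrystal(\genvb)^\vee \simeq \bigoplus_i \genisocrystal_i^\vee$ and ordering the $\genisocrystal_i^\vee$ by ascending slope $-\mu_i$, equivalently the $\genisocrystal_i$ by descending slope $\mu_i$, then negating, yields $\HN(\genvb)$ as the concatenation of the line segments $\HN(\genvb_i)$ arranged in descending slope order, which is exactly the claim.

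The main obstacle is bookkeeping the sign and duality conventions: the definition $\HN(\genvb) = -\Newt(\genisocrystal(\genvb)^\vee)$ composes two operations that each reverse the slope ordering, and one must track carefully that the net effect restores the convention that concave HN polygons are concatenated in descending slope order. Beyond this verification, the proof reduces cleanly to the classical Dieudonn\'e--Manin theorem transported across the bijection of Proposition~\ref{isocrystals and vector bundles on FF curve}.
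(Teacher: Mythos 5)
Your proof is correct and follows the same route as the paper, whose proof is the one-line observation that the claim follows from Proposition \ref{isocrystals and vector bundles on FF curve} together with the semisimplicity (Dieudonn\'e--Manin classification) of isocrystals over $\compmaxunram{\genlocfield}$. You have simply spelled out the details the paper leaves implicit, including the sign and duality bookkeeping in the definition $\HN(\genvb) = -\Newt(\genisocrystal(\genvb)^\vee)$, which you handle correctly.
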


\begin{proof}
The assertion is evident by Proposition \ref{isocrystals and vector bundles on FF curve} and the semisimplicity of isocrystals. 
\end{proof}

\begin{remark}
The statement \ref{uniqueness of direct summands in HN decomp} implies that the direct summands $\genvb_i$ are uniquely determined up to permutations. 
\end{remark}

\begin{defn}
Let $\genvb$ be a vector bundle on $\schff$. We refer to the direct sum decomposition $\genvb \simeq \oplus \genvb_i$ in Proposition \ref{HN decomp of vector bundles on FF curve} as the \emph{Harder-Narasimhan (HN) decomposition} of $\genvb$. 
\end{defn}

\subsection{The Newton stratification of Schubert cells and flag varieties}$ $

For the rest of this paper, we fix a closed point $\FFclosedpt$ on $\schff$ given by the following proposition:
\begin{prop}[{\cite[Th\'eor\`emes 6.5.2 and 7.3.3]{FF_curve}, \cite[Remark 1.7]{CT_weakaddandnewton}}]\label{distinguished closed pt on FF curve}
There exists a closed point $\FFclosedpt$ on $\schff$ with the following properties:
\begin{enumerate}[label=(\roman*)]
\item $\schff - \FFclosedpt$ is the spectrum of a principal domain $\Be \subseteq \BdR$. 
\smallskip

\item The completed local ring at $\FFclosedpt$ is canonically isomorphic to $\BdR^+$. 
%
\end{enumerate}
\end{prop}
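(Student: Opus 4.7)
The plan is to construct $\FFclosedpt$ explicitly via the Fontaine map applied to $\FFresfield$, then verify both properties using the Proj description of $\schff$. I would first apply Proposition \ref{relative fontaine map} with $\genperfdring = \FFresfield$ to obtain a surjection $\theta \colon \witt_{\integerring{\genlocfield}}(\integerring{\tilt{\FFresfield}}) \surj \integerring{\FFresfield}$ whose kernel is a principal ideal $(\xi)$. Since the quotient $\integerring{\FFresfield}$ has neither $\uniformizer$ nor $[\pseudounif]$ as a zerodivisor, the ideal $(\xi)$ defines a closed point of $\Ycal$, whose $\genfrob^\Z$-orbit is a closed point of $\adicff = \Ycal / \genfrob^\Z$; its image under the morphism $\adicff \to \schff$ from Theorem \ref{geom properties of FF curve} is the desired point $\FFclosedpt$.

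For property (ii), the completed local ring of $\witt_{\integerring{\genlocfield}}(\integerring{\tilt{\FFresfield}})[1/p]$ at $(\xi)$ is, by construction, the $\xi$-adic completion of that ring, which is exactly $\BdR^+$ by the definition given in the excerpt. Since our point does not lie in the locus removed in Definition \ref{adicFFC}, the quotient map $\Ycal \to \adicff$ is a local isomorphism there, and the morphism $\adicff \to \schff$ from Theorem \ref{geom properties of FF curve} induces an isomorphism on completed local rings; composing, the completed local ring of $\schff$ at $\FFclosedpt$ is canonically identified with $\BdR^+$.

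For property (i), affineness of $\schff - \FFclosedpt$ follows from the Proj description in Definition \ref{adicFFC} once we exhibit a degree-one element $t$ of the graded ring $\bigoplus_n H^0(\Ycal, \strsheaf[\Ycal])^{\genfrob = \uniformizer^n}$ whose vanishing on $\schff$ is exactly $\FFclosedpt$; such an element is classical (Fontaine's $\log([\epsilon])$ for a compatible system of $p$-power roots of unity $\epsilon$ when $\genlocfield = \Q_p$, with analogous Lubin-Tate constructions for general $\genlocfield$). One then has $\schff - \FFclosedpt = \Spec(\Be)$ with $\Be$ the degree-zero part of the localization at $t$. The main obstacle is to show that $\Be$ is a \emph{principal} ideal domain, not merely a Dedekind domain. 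Since $\schff$ is Dedekind by Theorem \ref{geom properties of FF curve}, so is $\Spec(\Be)$, reducing the problem to verifying $\Pic(\Spec(\Be)) = 0$. By Theorem \ref{classification of G-bundles on FF curve} applied to $G = \GL_1$, the group $\Pic(\schff)$ is infinite cyclic generated by $\strsheaf[\schff](1)$, which is trivialized on $\Spec(\Be)$ by $t$; the standard localization sequence for Picard groups of Dedekind schemes then gives $\Pic(\Spec(\Be)) = 0$, so every nonzero ideal of $\Be$ is principal.
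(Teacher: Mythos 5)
The paper does not actually prove this proposition: it is quoted directly from Fargues--Fontaine (Th\'eor\`emes 6.5.2 and 7.3.3) and Chen--Tong, so there is no internal argument to compare yours against. What you have written is therefore a sketch of how the cited theorems themselves are proved, and in outline it follows the standard route: realize $\FFclosedpt$ via the kernel of Fontaine's map $\theta$, identify the completed local ring with the $\xi$-adic completion defining $\BdR^+$, and deduce principality of $\Be$ from $\Pic(\schff - \FFclosedpt) = 0$, using that $\Pic(\schff)$ is infinite cyclic by the $\GL_1$ case of Theorem \ref{classification of G-bundles on FF curve}. That last Picard-group/localization argument is correct once its inputs are granted.

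The gaps are precisely at the hard inputs. First, everything hinges on the existence of a degree-one element $t$ of the graded ring whose divisor on $\schff$ is exactly the \emph{reduced} point $\FFclosedpt$; you assert this as ``classical,'' but it is the crux of Th\'eor\`eme 6.5.2 (for general $\genlocfield$ it requires Lubin--Tate periods and the fundamental exact sequence). Without it you get neither the affineness of $\schff - \FFclosedpt$ nor the identification $\Pic(\schff - \FFclosedpt) \cong \Pic(\schff)/\langle[\strsheaf[\schff](\FFclosedpt)]\rangle = 0$, since that quotient computation needs $\strsheaf[\schff](\FFclosedpt) \simeq \strsheaf[\schff](1)$, i.e.\ that $t$ has a simple zero at $\FFclosedpt$ and no other zero. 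Second, the completed-local-ring step conflates several completions: $\BdR^+$ is defined as the $\xi$-adic completion of the full (non-Noetherian) ring $\witt(\integerring{\tilt{\FFresfield}})[1/p]$, whereas what is required is the completion of the local ring of the scheme $\schff$ at $\FFclosedpt$; your passage through $\Ycal$ and $\adicff$ uses that the comparison map $\adicff \to \schff$ induces an isomorphism on completed local rings, which is not available from Theorem \ref{geom properties of FF curve} as stated (it only gives an equivalence of categories of vector bundles), and the claim that $\Ycal \to \adicff$ is a local isomorphism near the point, as well as the match between adic and algebraic local rings, needs separate justification. A minor further point: Proposition \ref{relative fontaine map} concerns the $p$-typical Witt vectors, while $\Ycal$ is built from the ramified Witt vectors $\witt_{\integerring{\genlocfield}}(\integerring{\tilt{\FFresfield}})$, so one needs the $\integerring{\genlocfield}$-variant of $\theta$. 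In short, your outline is the right strategy but presupposes exactly the analytic content of the theorems the paper is citing, so it does not constitute a self-contained proof.
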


\begin{remark}
A closed point on $\schff$ corresponds to a characteristic $0$ untilt of $\tilt{\FFresfield}$ (i.e., a perfectoid field $\genpadicfield$ with an isomorphism $\tilt{\genpadicfield} \simeq \tilt{\FFresfield}$) up to $\genfrob$-equivalences. 
We may take $\FFclosedpt$ to be the closed point on $\schff$ corresponding to $\FFresfield$ with the identity map on $\tilt{\FFresfield}$. The field $\FFresfield$ alone does not determine $\FFclosedpt$ as $\tilt{\FFresfield}$ has automorphisms which are not $\genfrob$-equivalent to the identity map. 
\end{remark}


\begin{prop}\label{Beauville-Laszlo for FF curve}
The set $H_{\et}^1(\schff, G)$ is naturally in bijection with the set of isomorphism classes of triples $(\punctuation{\genvb}, \completion{\genvb}, \gentrivialization)$ where
\begin{itemize}
\item $\punctuation{\genvb}$ is a $G$-bundle on $\schff - \FFclosedpt$,
\smallskip

\item $\completion{\genvb}$ is a trivial $G$-bundle on $\Spec(\BdR^+)$, and
\smallskip

\item $\gentrivialization$ is a gluing map of $\punctuation{\genvb}$ and $\completion{\genvb}$ over $\Spec(\BdR)$. 
\end{itemize}
\end{prop}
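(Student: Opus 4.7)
My plan is to deduce the proposition from the Beauville--Laszlo gluing theorem combined with the automatic triviality of $G$-torsors over $\Spec(\BdR^+)$. By Proposition~\ref{distinguished closed pt on FF curve}, the closed point $\FFclosedpt$ of the Dedekind scheme $\schff$ has completed local ring $\BdR^+$ with fraction field $\BdR$, and $\schff - \FFclosedpt = \Spec(\Be)$. The classical Beauville--Laszlo theorem \cite{BL_beauvillelaszlothm} therefore identifies quasi-coherent sheaves on $\schff$ with gluing data along $\Spec(\Be)$ and $\Spec(\BdR^+)$ meeting over $\Spec(\BdR)$. To upgrade this to $G$-torsors, I would either choose a faithful representation $G \inj \GL_N$ and descend along the quotient, or appeal to the Tannakian description of $G$-torsors as tensor-exact functors from $\Rep(G)$ to vector bundles. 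This produces a bijection between $H_{\et}^1(\schff, G)$ and isomorphism classes of triples $(\punctuation{\genvb}, \completion{\genvb}, \gentrivialization)$ in which $\completion{\genvb}$ is an \emph{arbitrary} $G$-torsor on $\Spec(\BdR^+)$.

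It then remains to show that every such $\completion{\genvb}$ is trivializable, which will promote the bijection above to the one claimed in the proposition. Recall from Proposition~\ref{BdR as a discrete valued field} that $\BdR^+$ is a complete discrete valuation ring with algebraically closed residue field $\FFresfield$, and that $G$ is connected reductive, hence smooth over $\genlocfield$. First, by Steinberg's theorem, every $G$-torsor over the algebraically closed field $\FFresfield$ is trivial, so $\completion{\genvb}|_{\Spec(\FFresfield)}$ admits a section. Next, the formal smoothness of $G$ allows one to lift this section successively modulo $\gendRunif^n$, and an inverse limit using the $\gendRunif$-adic completeness of $\BdR^+$ yields a global section of $\completion{\genvb}$ over $\Spec(\BdR^+)$. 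Two different trivializations of $\completion{\genvb}$ differ by an element of $G(\BdR^+)$, which is absorbed into the gluing datum $\gentrivialization$ via an isomorphism of triples, so the claimed bijection follows.

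The main technical input I anticipate is the enhancement of Beauville--Laszlo from quasi-coherent sheaves to $G$-torsors; once this packaging is in place, the rest is a routine combination of Steinberg's theorem and Hensel-type lifting along a complete DVR with algebraically closed residue field. I do not expect any substantive obstacle beyond this bookkeeping, because both ingredients are entirely standard for a connected reductive $G$ over such a base.
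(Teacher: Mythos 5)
Your proof is correct and takes essentially the same route as the paper: Beauville--Laszlo gluing along the closed point $\FFclosedpt$ (using Proposition \ref{distinguished closed pt on FF curve}) combined with the fact that every $G$-torsor over $\Spec(\BdR^+)$ is trivial. The only difference is that the paper simply cites Nguyen--Viehmann and Chen--Tong for this triviality, whereas you supply a direct (and valid) argument via triviality over the algebraically closed residue field $\FFresfield$ together with smoothness and Hensel-type lifting over the complete discrete valuation ring $\BdR^+$.
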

\begin{proof}
Every $G$-bundle on $\schff$ becomes trivial after the pullback via the map $\Spec(\BdR^+) \to \schff$ induced by $\FFclosedpt$, as noted by Nguyen-Viehmann \cite[\S2.1]{NV_HNstrata} and Chen-Tong \cite[Remark 1.7]{CT_weakaddandnewton}. Hence the desired assertion follows from Proposition \ref{distinguished closed pt on FF curve} and the theorem of Beauville-Laszlo \cite{BL_beauvillelaszlothm}. 
\end{proof}

\begin{defn}\label{modification of G-bundle on FF curve}
Let $\genvb$ be a $G$-bundle on $\schff$. A \emph{modification} of $\genvb$ at $\FFclosedpt$ is a $G$-bundle $\modif{\genvb}$ on $\schff$ together with an isomorphism between $\genvb$ and $\modif{\genvb}$ on $\schff - \FFclosedpt$. 
\end{defn}

\begin{example}\label{modification given by a point on BdR+ Grassmannian}
Consider an element $b \in B(G)$ and a point $x \in \Gr_G(\FFresfield)$. We may write $x = gG(\BdR^+)$ for some $g \in G(\BdR)$ under the identification $\Gr_G(\FFresfield) \cong G(\BdR)/G(\BdR^+)$ noted in Proposition \ref{points of BdR-Grassmannian}. Now, in light of Proposition \ref{Beauville-Laszlo for FF curve} we take a triple $(\punctuation{\genvb}, \completion{\genvb}, \gentrivialization)$ corresponding to $\genvb_b$ and a $G$-bundle $\genvb_{b, x}$ on $\schff$ corresponding to $(\punctuation{\genvb}, \completion{\genvb}, g\gentrivialization)$. 
By construction, 
$\genvb_{b, x}$ is naturally a modification of $\genvb_b$ at $\FFclosedpt$. 
\end{example}

\begin{defn}\label{def of newton stratification}
Consider an element $b \in B(G)$ and a dominant cocharacter $\gencochar$ of $G$. 
\begin{enumerate}[label=(\arabic*)]
\item For each $x \in \Gr_G(\FFresfield)$, we refer to the $G$-bundle $\genvb_{b, x}$ constructed in Example \ref{modification given by a point on BdR+ Grassmannian} as the \emph{modification of $\genvb_b$ at $\FFclosedpt$ induced by $x$}. 
\smallskip

\item For each $\modif{b} \in B(G)$, we define the associated \emph{Newton stratum with respect to $b$} in $\Gr_{G, \gencochar}$ as the subdiamond $\Gr_{G, \gencochar, b}^{\modif{b}}$ of $\Gr_{G, \gencochar}$ with
\[\Gr_{G, \gencochar, b}^{\modif{b}}(\FFresfield) = \{x \in \Gr_{G, \gencochar}(\FFresfield): \genvb_{b, x} \simeq \genvb_{\modif{b}}\}.\]

\item For each $\modif{b} \in B(G)$, we define the associated \emph{Newton stratum with respect to $b$} in $\flagvar(G, \gencochar)$ as the subvariety $\flagvar(G, \gencochar, b)^{\modif{b}}$ of $\flagvar(G, \gencochar)$ such that $\flagvar(G, \gencochar, b)^{\modif{b}}(\FFresfield)$ is the image of $\Gr_{G, \gencochar, b}^{\modif{b}}(\FFresfield)$ under the map $\BBmap{\gencochar}$. 
\end{enumerate}
\end{defn}

\begin{remark}
The subdiamond $\Gr_{G, \gencochar, b}^{\modif{b}}$ of $\Gr_{G, \gencochar}$ is uniquely determined by its set of $\FFresfield$-points since $\Gr_{G, \gencochar}$ is a locally spatial diamond.
\end{remark}

\subsection{Subsheaves and extensions of vector bundles on the Fargues-Fontaine curve}$ $

\begin{defn}
Given two integers $n$ and $d$ with $n >0$, a \emph{rationally tuplar polygon} of rank $n$ and degree $d$ is the graph $\genpolygon$ of a continuous function $f$ with the following properties:
\begin{enumerate}[label=(\roman*)]
\item $f$ is defined on $[0, n]$ with $f(0) = 0$ and $f(n) = d$. 
\smallskip

\item $f$ is linear on $[i-1, i]$ for each $i = 1, \cdots, n$ with a rational slope denoted by $\HNslope{\genpolygon}{i}$. 
\end{enumerate} 
\end{defn}

\begin{example}\label{examples of rationally tuplar polygon}
We are particularly interested in the following rationally tuplar polygons:
\begin{enumerate}[label=(\arabic*)]
\item For every vector bundle $\genvb$ on $\schff$ of rank $n$ and degree $d$, its HN polygon $\HN(\genvb)$ is a rationally tuplar polygon of rank $n$ and degree $d$.
\smallskip

\item For $G = \GL_n$ with Borel subgroup $\genborel$ of upper triangular matrices and maximal torus $\genmaxtorus$ of diagonal matrices, we regard all dominant cocharacters as rationally tuplar polygons of rank $n$ under the natural identification
\[\domcocharset \cong \{ (a_i) \in \Z^n: a_1 \geq a_2 \geq \cdots \geq a_n\}.\]

\item We write $\lineseg{d/n}{n}$ for the line segment connecting $(0, 0)$ and $(n, d)$, which is a rationally tuplar polygon of rank $n$ and degree $d$.
\end{enumerate}
\end{example}

\begin{defn}\label{partial orders for rationally tuplar polygons}
Let $\polyset{n}$ denote the set of rationally tuplar polygons of rank $n$. 
\begin{enumerate}[label=(\arabic*)]
\item We define the \emph{Bruhat order} $\geq$ on $\polyset{n}$ by writing $\genpolygon \geq \gensecpolygon$ if we have 
\[ \sum_{i = 1}^j \HNslope{\genpolygon}{i} \geq \sum_{i=1}^j \HNslope{\gensecpolygon}{i} \quad\quad \text{ for each } j = 1, \cdots, n\]
with equality for $j = n$.
\smallskip

\item We define the \emph{slopewise dominance order} $\slodom$ on $\polyset{n}$ by writing $\genpolygon \slodom \gensecpolygon$ if we have $\HNslope{\genpolygon}{i} \geq \HNslope{\gensecpolygon}{i}$ for each $i = 1, \cdots, n$. 
\end{enumerate}
\end{defn}

\begin{remark}
Intuitively, we have $\genpolygon \geq \gensecpolygon$ if and only if $\genpolygon$ lies on or above $\gensecpolygon$ with the same endpoints, as illustrated by Figure \ref{dominance order figure illustration}.
\end{remark}
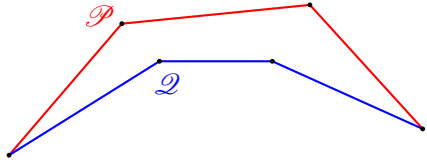
\begin{figure}[H]
\begin{tikzpicture}[scale=0.5]	
		\coordinate (left) at (0, 0);
		\coordinate (p1) at (3, 3.5);
		\coordinate (p2) at (8, 4);
		\coordinate (right) at (11, 0.7);

		\coordinate (q2) at (4, 2.5);
		\coordinate (q3) at (7, 2.5);
				
		\draw[step=1cm,thick, color=red] (left) -- (p1) --  (p2) -- (right);
		\draw[step=1cm,thick, color=blue] (left) -- (q2) -- (q3) -- (right);

		\draw [fill] (q2) circle [radius=0.05];		
		\draw [fill] (q3) circle [radius=0.05];			
		\draw [fill] (left) circle [radius=0.05];
		\draw [fill] (right) circle [radius=0.05];		
		
		\draw [fill] (p1) circle [radius=0.05];		
		\draw [fill] (p2) circle [radius=0.05];		
		
		\path (q2) ++(0.2, -0.6) node {\color{blue}$\gensecpolygon$};
		\path (p1) ++(-0.6, 0.2) node {\color{red}$\genpolygon$};
\end{tikzpicture}
\caption{Illustration of the Bruhat order}\label{dominance order figure illustration}
\end{figure}

\begin{prop}[{\cite[Theorem 1.2.1]{Hong_subvb}}]\label{subsheaves and slopewise dominance}
Let $\gensubvb$ and $\genvb$ be vector bundles on $\schff$ of rank $n$. 
Then $\gensubvb$ is a subsheaf of $\genvb$ if and only if we have $\HN(\genvb) \slodom \HN(\gensubvb)$. 
\end{prop}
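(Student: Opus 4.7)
The plan is to treat the two implications separately. The direction ($\Rightarrow$) is a formal consequence of the HN decomposition combined with vanishing of $\Hom$ between semistable bundles of incompatible slopes, while the direction ($\Leftarrow$) requires an inductive construction of the embedding, which is the technical heart of the result.

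For ($\Rightarrow$), assume $\gensubvb \hookrightarrow \genvb$, and for each $c \in \Q$ define $\gensubvb_{\geq c}$ (respectively $\genvb_{\geq c}$) to be the direct sum of those HN summands of $\gensubvb$ (respectively of $\genvb$) furnished by Proposition \ref{HN decomp of vector bundles on FF curve} whose slope is $\geq c$. I claim that the composition $\gensubvb_{\geq c} \hookrightarrow \gensubvb \hookrightarrow \genvb$ factors through $\genvb_{\geq c}$: further composing with the projection $\genvb \twoheadrightarrow \genvb/\genvb_{\geq c}$ yields a map from a direct sum of semistable bundles of slope $\geq c$ to one of slope $<c$, which vanishes since $\Hom$ between semistable bundles of respective slopes $\lambda>\mu$ is zero on $\schff$. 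This produces an injection $\gensubvb_{\geq c} \hookrightarrow \genvb_{\geq c}$, hence $\rk(\gensubvb_{\geq c}) \leq \rk(\genvb_{\geq c})$. Since $\rk(\gensubvb_{\geq c})$ equals the number of indices $i$ with $\HNslope{\HN(\gensubvb)}{i} \geq c$ (and similarly for $\genvb$), taking $c = \HNslope{\HN(\gensubvb)}{k}$ for each $k$ yields $\HNslope{\HN(\genvb)}{k} \geq \HNslope{\HN(\gensubvb)}{k}$, i.e.\ $\HN(\genvb) \slodom \HN(\gensubvb)$.

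For ($\Leftarrow$), induct on the number $s$ of distinct HN slopes of $\gensubvb$. In the base case $s=1$, $\gensubvb$ is semistable of some slope $\genslope$, and slopewise dominance forces every HN slope of $\genvb$ to be $\geq \genslope$; after twisting both bundles by a line bundle of slope $-\genslope$, the required injection is obtained by exhibiting a sufficiently generic section of $\gensubvb^{\vee} \otimes \genvb$ which, having source and target of equal rank, is automatically injective, relying on positivity of $H^0$ for bundles on $\schff$ of nonnegative slope as established in \cite{FF_curve}. For the inductive step, denote by $\auxsubvb$ the top (maximum-slope) HN summand of $\gensubvb$ and fix an embedding of $\auxsubvb$ into a rank-compatible subbundle of $\genvb$ whose cokernel $\modif{\genvb}$ satisfies $\HN(\modif{\genvb}) \slodom \HN(\gensubvb/\auxsubvb)$; the induction hypothesis then produces an embedding of $\gensubvb/\auxsubvb$ into $\modif{\genvb}$, which lifts back to $\genvb$ using the vanishing of $\Ext^1$ between appropriately ordered pairs of semistable bundles on $\schff$.

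The main obstacle lies in the inductive step of ($\Leftarrow$): the subbundle of $\genvb$ into which $\auxsubvb$ is embedded must be selected so that its cokernel, together with the lower-slope part of $\genvb$, retains an HN polygon slopewise-dominating that of $\gensubvb/\auxsubvb$. A naive matching between the top summands of $\gensubvb$ and $\genvb$ can fail when their ranks do not line up, so one needs an adaptive choice driven by the full slope sequence of $\HN(\gensubvb)$. Once this combinatorial selection is made, verifying that the cokernel polygon continues to dominate is a direct bookkeeping argument on slope multiplicities, after which the $\Ext^1$-vanishing noted above assembles the pieces into the desired global embedding.
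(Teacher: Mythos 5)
First, a point of comparison: the paper does not prove this proposition at all --- it is quoted as Theorem 1.2.1 of \cite{Hong_subvb} and used as a black box. So what you are really attempting is a from-scratch proof of that cited theorem. Your forward direction is fine: composing $\gensubvb_{\geq c} \hookrightarrow \genvb$ with the projection to $\genvb/\genvb_{\geq c}$ kills the map by the slope vanishing of $\Hom$, so $\gensubvb_{\geq c}$ injects into $\genvb_{\geq c}$, and the rank count for $c = \HNslope{\HN(\gensubvb)}{k}$ gives slopewise dominance. Likewise, the final lifting step in your induction is sound and is exactly Proposition \ref{splitting extension for vector bundles with dominating slopes}: the preimage in $\genvb$ of $\gensubvb/\auxsubvb \subseteq \genvb/\auxsubvb$ is an extension of $\gensubvb/\auxsubvb$ by $\auxsubvb$, which splits because $\auxsubvb$ carries the maximal slope, and the splitting reassembles $\gensubvb$ inside $\genvb$.

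The gaps are in the two places where the existence of maps actually has to be produced. In the base case, ``twist by a line bundle of slope $-\genslope$'' is not available: $\Pic(\schff) \cong \Z$, so no such line bundle exists when $\genslope \notin \Z$; and the claim that a sufficiently generic section of $\gensubvb^{\vee}\otimes\genvb$ is ``automatically injective'' because the ranks agree is unjustified --- equal rank only says injectivity is a generic-fiber condition, not that some map achieves it. Indeed for $\gensubvb = \strsheaf(1/2)$ and $\genvb = \strsheaf(1)^{\oplus 2}$ every individual component $\strsheaf(1/2) \to \strsheaf(1)$ has nonzero kernel for rank reasons, so one must genuinely prove that some combination of sections has zero common kernel; this is precisely the kind of statement the cited paper works to establish, and ``positivity of $H^0$ for nonnegative slope'' does not deliver it by itself. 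In the inductive step, the entire weight rests on choosing a saturated embedding $\auxsubvb \hookrightarrow \genvb$ whose quotient $\modif{\genvb}$ satisfies $\HN(\modif{\genvb}) \slodom \HN(\gensubvb/\auxsubvb)$. You correctly identify this as the main obstacle, but then dispose of it with ``an adaptive choice'' and ``direct bookkeeping.'' Knowing which vector bundles occur as quotients of $\genvb$ by a semistable subbundle of prescribed rank and slope is a classification statement of essentially the same depth as the theorem you are proving (it is the companion result in \cite{Hong_subvb}), so deferring it leaves the heart of the argument unproven. As written, the proposal is a plausible reduction scheme, not a proof.
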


\begin{defn}\label{suitable permutation of HN polygon for extension}
Given vector bundles $\gensubvb$, $\genextvb$ and $\genquotvb$ on $\schff$, we define 
a \emph{$(\gensubvb, \genextvb, \genquotvb)$-permutation} of $\HN(\gensubvb \oplus \genquotvb)$ to be a rationally tuplar polygon $\genpolygon \geq \HN(\genextvb)$ with the following properties:
\begin{enumerate}[label=(\roman*)]
\item The tuple $(\HNslope{\genpolygon}{i})$ is a permutation of the tuple $(\HNslope{\HN(\gensubvb \oplus \genquotvb)}{i})$. 
\smallskip


\item For each $i = 1, \cdots, \rk(\genextvb)$, we have
\smallskip
\begin{itemize}
\item $\HNslope{\genpolygon}{i} < \HNslope{\HN(\genextvb)}{i}$ only if $\HNslope{\genpolygon}{i}$ occurs as a slope in $\HN(\gensubvb)$, and
\smallskip

\item $\HNslope{\genpolygon}{i} > \HNslope{\HN(\genextvb)}{i}$ only if $\HNslope{\genpolygon}{i}$ occurs as a slope in $\HN(\genquotvb)$.
\end{itemize}
\end{enumerate}
\end{defn}

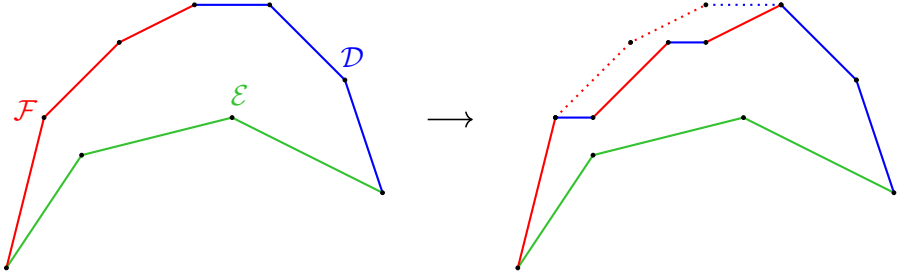
\begin{figure}[H]
\begin{tikzpicture}[scale=0.5]	
		\coordinate (left) at (0, 0);
		\coordinate (p1) at (2, 3);
		\coordinate (p2) at (6, 4);
		\coordinate (right) at (10, 2);

		\coordinate (q1) at (1, 4);
		\coordinate (q2) at (3, 6);
		\coordinate (q3) at (5, 7);
		\coordinate (q4) at (7, 7);
		\coordinate (q5) at (9, 5);
				
		\draw[step=1cm,thick, color=mynicegreen] (left) -- (p1) --  (p2) -- (right);
		\draw[step=1cm,thick, color=red] (left) -- (q1) -- (q2) -- (q3);
		\draw[step=1cm,thick, color=blue] (q3) -- (q4) -- (q5) -- (right);

		\draw [fill] (q1) circle [radius=0.05];		
		\draw [fill] (q2) circle [radius=0.05];		
		\draw [fill] (q3) circle [radius=0.05];		
		\draw [fill] (q4) circle [radius=0.05];		
		\draw [fill] (q5) circle [radius=0.05];		
		\draw [fill] (left) circle [radius=0.05];
		\draw [fill] (right) circle [radius=0.05];		
		
		\draw [fill] (p1) circle [radius=0.05];		
		\draw [fill] (p2) circle [radius=0.05];		
		
		\path (q5) ++(0.2, 0.6) node {\color{blue}$\gensubvb$};
		\path (p2) ++(0.2, 0.6) node {\color{mynicegreen}$\genextvb$};
		\path (q1) ++(-0.5, 0.2) node {\color{red}$\genquotvb$};
\end{tikzpicture}
\hspace{0.3cm}
\begin{tikzpicture}[scale=0.4]
        \pgfmathsetmacro{\textycoordinate}{5}
		\draw[->, line width=0.6pt] (0, \textycoordinate) -- (1.5,\textycoordinate);
		\draw (0,0) circle [radius=0.00];	
\end{tikzpicture}
\hspace{0.3cm}
\begin{tikzpicture}[scale=0.5]
		\coordinate (left) at (0, 0);
		\coordinate (p1) at (2, 3);
		\coordinate (p2) at (6, 4);
		\coordinate (right) at (10, 2);

		\coordinate (q1) at (1, 4);
		\coordinate (q2) at (3, 6);
		\coordinate (q3) at (5, 7);
		\coordinate (q4) at (7, 7);
		\coordinate (q5) at (9, 5);
		
		\coordinate (r1) at (q1);
		\coordinate (r2) at (2, 4);
		\coordinate (r3) at (4, 6);
		\coordinate (r4) at (5, 6);
		\coordinate (r5) at (q4);
		\coordinate (r6) at (q5);
				
		\draw[step=1cm,thick, color=mynicegreen] (left) -- (p1) --  (p2) -- (right);
		\draw[step=1cm,thick, color=red] (left) -- (q1);
		\draw[step=1cm,thick,dotted, color=red] (q1) -- (q2) -- (q3);
		\draw[step=1cm,thick,dotted, color=blue] (q3) -- (q4);
		\draw[step=1cm,thick, color=blue] (q4) -- (q5) -- (right);
		
		\draw[step=1cm,thick, color=blue] (r1) -- (r2);
		\draw[step=1cm,thick, color=red] (r2) -- (r3);
		\draw[step=1cm,thick, color=blue] (r3) -- (r4);
		\draw[step=1cm,thick, color=red] (r4) -- (r5);

		\draw [fill] (q1) circle [radius=0.05];		
		\draw [fill] (q2) circle [radius=0.05];		
		\draw [fill] (q3) circle [radius=0.05];		
		\draw [fill] (q4) circle [radius=0.05];		
		\draw [fill] (q5) circle [radius=0.05];		
		\draw [fill] (left) circle [radius=0.05];
		\draw [fill] (right) circle [radius=0.05];		
		
		\draw [fill] (p1) circle [radius=0.05];		
		\draw [fill] (p2) circle [radius=0.05];	

		\draw [fill] (r1) circle [radius=0.05];		
		\draw [fill] (r2) circle [radius=0.05];		
		\draw [fill] (r3) circle [radius=0.05];		
		\draw [fill] (r4) circle [radius=0.05];		
		\draw [fill] (r5) circle [radius=0.05];		
		\draw [fill] (r6) circle [radius=0.05];		
		
\end{tikzpicture}
\caption{Illustration of the conditions in Definition \ref{suitable permutation of HN polygon for extension}}
\end{figure}

\begin{prop}[{\cite[Proposition 5.6.23]{FF_curve}}]\label{splitting extension for vector bundles with dominating slopes}
Given vector bundles $\gensubvb$ and $\genquotvb$ on $\schff$ such that the minimum slope in $\HN(\gensubvb)$ is greater than or equal to the maximum slope in $\HN(\genquotvb)$, every extension of $\genquotvb$ by $\gensubvb$ splits. 
\end{prop}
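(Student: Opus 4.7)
The strategy is to prove the stronger statement $\mathrm{Ext}^1(\genquotvb, \gensubvb) = 0$, which immediately forces every extension of $\genquotvb$ by $\gensubvb$ to split.

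First I would reduce to the semistable case. By Proposition \ref{HN decomp of vector bundles on FF curve}, both bundles admit HN decompositions $\gensubvb \simeq \bigoplus_i \gensubvb_i$ and $\genquotvb \simeq \bigoplus_j \genquotvb_j$ into semistable summands. Since $\mathrm{Ext}^1$ is additive in each argument, it suffices to check that $\mathrm{Ext}^1(\genquotvb_j, \gensubvb_i) = 0$ for every pair $(i,j)$. The hypothesis that the minimum slope in $\HN(\gensubvb)$ dominates the maximum slope in $\HN(\genquotvb)$ ensures that each such pair satisfies $\mu(\gensubvb_i) \geq \mu(\genquotvb_j)$, so the reduction is to pairs of semistable bundles with this slope relation.

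Next I would identify $\mathrm{Ext}^1(\genquotvb_j, \gensubvb_i) \cong H^1(\schff, \gensubvb_i \otimes \genquotvb_j^\vee)$. The tensor product of semistable bundles on the Fargues-Fontaine curve is again semistable, so $\gensubvb_i \otimes \genquotvb_j^\vee$ is semistable of slope $\mu(\gensubvb_i) - \mu(\genquotvb_j) \geq 0$. Applying the cohomological vanishing theorem of Fargues-Fontaine, namely that $H^1(\schff, \Fcal) = 0$ for any semistable vector bundle $\Fcal$ on $\schff$ of nonnegative slope, yields the desired vanishing.

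The hard part is the last step, which is a genuinely nontrivial input from the structure theory of $\schff$ and is really why this proposition is attributed to Fargues-Fontaine rather than proved from scratch here. The vanishing rests on the precise classification of semistable bundles on the curve together with an explicit computation of $H^1(\schff, \Ocal_\schff(\lambda))$ for rational $\lambda$; semistability of the tensor product is similarly a structural fact about the curve. Modulo these two ingredients, the argument above is purely formal.
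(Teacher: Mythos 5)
The paper does not prove this proposition at all: it is quoted verbatim from Fargues--Fontaine (Proposition 5.6.23 of their book), so there is no internal argument to compare against. Your sketch is correct and is essentially the proof one finds in the cited source: reduce via the HN decompositions and additivity of $\Ext^1$ to semistable summands $\gensubvb_i$, $\genquotvb_j$ with $\mu(\gensubvb_i)\geq\mu(\genquotvb_j)$, identify $\Ext^1(\genquotvb_j,\gensubvb_i)\cong H^1(\schff,\gensubvb_i\otimes\genquotvb_j^\vee)$ (legitimate since $\genquotvb_j$ is locally free), and invoke the two structural facts about the curve --- tensor compatibility of semistability and vanishing of $H^1$ in nonnegative slopes. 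If anything, you can make the last step even more concrete using the classification itself: a semistable bundle of slope $\lambda$ is a direct sum of copies of the stable bundle $\strsheaf[\schff](\lambda)$, so $\gensubvb_i\otimes\genquotvb_j^\vee$ decomposes into copies of $\strsheaf[\schff](\lambda)$ with $\lambda\geq 0$, for which $H^1$ vanishes; this is precisely how the vanishing is obtained in Fargues--Fontaine, so your deferral of those two ingredients is exactly the right division of labor.
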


\begin{prop}[{\cite[Theorem 3.12]{Hong_extvbfinal}}, {\cite[Proposition 5.3]{CT_weakaddandnewton}}]\label{classification of extensions, necessity condition on E-permutation}
Let $\gensubvb$, $\genextvb$ and $\genquotvb$ be vector bundles on $\schff$ such that there exists a short exact sequence 
\begin{equation*}\label{short exact sequence, semistable case}
0 \longrightarrow \gensubvb \longrightarrow \genextvb \longrightarrow \genquotvb \longrightarrow 0.
\end{equation*}
There exists a $(\gensubvb, \genextvb, \genquotvb)$-permutation of $\HN(\gensubvb \oplus \genquotvb)$.
\end{prop}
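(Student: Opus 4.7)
The plan is to handle the case when $\genextvb$ is semistable first, and then reduce the general case to the semistable case via the Harder-Narasimhan filtration of $\genextvb$.

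\emph{Semistable case.} Suppose $\genextvb$ is semistable of slope $\lambda$. Every HN slope of the subsheaf $\gensubvb$ is at most $\lambda$, since any subsheaf of a semistable bundle on $\schff$ has maximum slope at most the semistable slope; dually, every HN slope of $\genquotvb$ is at least $\lambda$. I would define $\genpolygon$ by listing the slopes of $\HN(\genquotvb)$ in decreasing order followed by those of $\HN(\gensubvb)$ in decreasing order. The slope multiset of $\genpolygon$ then agrees with that of $\HN(\gensubvb \oplus \genquotvb)$ by construction; condition (ii) of Definition \ref{suitable permutation of HN polygon for extension} follows because slopes of $\genpolygon$ above $\lambda$ come from $\HN(\genquotvb)$ and those below $\lambda$ come from $\HN(\gensubvb)$; and $\genpolygon \geq \HN(\genextvb)$ in the Bruhat order follows from a direct partial-sum computation using that all tail slopes of $\genpolygon$ are at most $\lambda$.

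\emph{General case.} For arbitrary $\genextvb$, consider the HN filtration $0 = \genextvb_0 \subset \genextvb_1 \subset \cdots \subset \genextvb_r = \genextvb$, with $\genextvb_i/\genextvb_{i-1}$ semistable of slope $\eta_i'$. Setting $\gensubvb_{(i)} := \gensubvb \cap \genextvb_i$ and letting $\genquotvb_{(i)}$ denote the image of $\genextvb_i$ in $\genquotvb$, the snake lemma produces induced short exact sequences
\[
0 \longrightarrow \gensubvb_{(i)}/\gensubvb_{(i-1)} \longrightarrow \genextvb_i/\genextvb_{i-1} \longrightarrow \genquotvb_{(i)}/\genquotvb_{(i-1)} \longrightarrow 0.
\]
Applying the semistable case to each layer yields polygons $\genpolygon_i$, and the concatenation $\genpolygon := \genpolygon_1 \concat \cdots \concat \genpolygon_r$ satisfies $\genpolygon \geq \HN(\genextvb)$ in the Bruhat order and condition (ii) layerwise.

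\emph{Main obstacle.} The difficulty is that the slope multiset of this $\genpolygon$ is the union over $i$ of the slope multisets of $\gensubvb_{(i)}/\gensubvb_{(i-1)}$ and $\genquotvb_{(i)}/\genquotvb_{(i-1)}$, which in general differs from the slope multiset of $\HN(\gensubvb \oplus \genquotvb)$. For instance, the filtration $\Ocal \subset \Ocal(1/2)$ of a semistable $\Ocal(1/2)$ has successive quotients with slopes $\{0, 1\}$ instead of $\{1/2, 1/2\}$. To resolve this mismatch, I would invoke Proposition \ref{HN decomp of vector bundles on FF curve} together with the classification of extensions of semistable bundles on the Fargues-Fontaine curve, using them to refine the filtration and swap slopes within layers where $\HN(\genextvb)$ is constant so that the resulting slope multiset matches $\HN(\gensubvb \oplus \genquotvb)$ exactly while preserving both the Bruhat inequality and condition (ii). This delicate combinatorial matching, worked out in detail in \cite[Theorem 3.12]{Hong_extvbfinal} and \cite[Proposition 5.3]{CT_weakaddandnewton}, is where the main content of the proposition lies.
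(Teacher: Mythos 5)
First, a point of comparison: this paper does not prove the proposition at all --- it is imported verbatim from \cite[Theorem 3.12]{Hong_extvbfinal} and \cite[Proposition 5.3]{CT_weakaddandnewton} --- so there is no internal proof to measure against, and a complete blind proof would have to reproduce the content of those references. Your semistable case is correct: when $\genextvb$ is semistable of slope $\lambda$ one has $\mumax(\gensubvb) \leq \lambda \leq \mumin(\genquotvb)$, and the polygon obtained by listing the slopes of $\HN(\genquotvb)$ followed by those of $\HN(\gensubvb)$, each in descending order, is indeed a $(\gensubvb, \genextvb, \genquotvb)$-permutation of $\HN(\gensubvb \oplus \genquotvb)$ by the partial-sum computation you indicate.

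The general case, however, has a genuine gap, and it sits exactly where you flag it. Condition (i) of Definition \ref{suitable permutation of HN polygon for extension} pins the slope multiset of the sought polygon to be exactly that of $\HN(\gensubvb \oplus \genquotvb)$, whereas your layerwise construction produces the slopes of the subquotients $\gensubvb_{(i)}/\gensubvb_{(i-1)}$ and $\genquotvb_{(i)}/\genquotvb_{(i-1)}$; as your own example shows, these are genuinely different rational numbers (e.g.\ $\{0,1\}$ versus $\{1/2,1/2\}$), not a rearrangement of the HN slopes of $\gensubvb \oplus \genquotvb$, so no amount of ``swapping slopes within layers'' can restore condition (i), and refining the HN filtration of $\genextvb$ does not change the induced subquotients of $\gensubvb \cap \genextvb_\bullet$ at all. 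Discharging this step by citing the same two references means the argument adds nothing beyond the citation the paper already makes, while the reduction you do carry out (to $\genextvb$ semistable via its HN filtration) points in an unproductive direction. If one wants a filtration-based reduction, the natural one goes the other way: pull back the HN filtration of $\genquotvb$ to a filtration of $\genextvb$ beginning with $\gensubvb$, which preserves $\gensubvb$ and the HN constituents of $\genquotvb$ (this is the shape of Proposition \ref{classification of extensions}); but passing from the resulting layerwise data to a single $(\gensubvb,\genextvb,\genquotvb)$-permutation of $\HN(\gensubvb \oplus \genquotvb)$ is again essentially the content of the cited results, so the main work of the proposition remains unsupplied.
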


\begin{prop}[{\cite[Theorem 4.4]{Hong_extvbfinal}}, {\cite[Proposition 5.9]{CT_weakaddandnewton}}]\label{classification of extensions}
Let $\gensubvb$, $\genextvb$ and $\genquotvb$ be vector bundles on $\schff$. We write the HN decomposition of $\genquotvb$ as
\[ \genquotvb \simeq \bigoplus_{i=1}^\numslope \genquotvb_i\]
where the $\genquotvb_i$'s are arranged in order of descending slope. There exists a short exact sequence 
\begin{equation*}\label{short exact sequence, semistable case}
0 \longrightarrow \gensubvb \longrightarrow \genextvb \longrightarrow \genquotvb \longrightarrow 0
\end{equation*}
if and only if there exists a sequence of vector bundles $\gensubvb = \genvb_0, \genvb_1, \cdots, \genvb_\numslope = \genvb$ on $\schff$ such that the polygon $\HN(\genvb_{i-1} \oplus \genquotvb_i)$ has an $(\genextvb_{i-1}, \genextvb_i, \genquotvb_i)$-permutation for each $i = 1, \cdots, \numslope$. 
\end{prop}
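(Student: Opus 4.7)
The plan is to reduce both directions to the semistable-quotient case and iterate along the HN decomposition of $\genquotvb$. For the forward direction, given a short exact sequence $0 \to \gensubvb \to \genextvb \to \genquotvb \to 0$, I would let $\genvb_i \subseteq \genextvb$ denote the preimage of $\bigoplus_{j=1}^i \genquotvb_j$ under the surjection $\genextvb \twoheadrightarrow \genquotvb$. This yields a filtration $\gensubvb = \genvb_0 \subseteq \genvb_1 \subseteq \cdots \subseteq \genvb_\numslope = \genextvb$ with short exact sequences $0 \to \genvb_{i-1} \to \genvb_i \to \genquotvb_i \to 0$. Applying Proposition \ref{classification of extensions, necessity condition on E-permutation} to each such sequence produces the required $(\genvb_{i-1}, \genvb_i, \genquotvb_i)$-permutation of $\HN(\genvb_{i-1} \oplus \genquotvb_i)$.

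For the reverse direction, I would invoke the semistable-quotient case of the classification theorem, which asserts that when $\genquotvb_i$ is semistable, the existence of a $(\genvb_{i-1}, \genvb_i, \genquotvb_i)$-permutation of $\HN(\genvb_{i-1} \oplus \genquotvb_i)$ is equivalent to the existence of a short exact sequence $0 \to \genvb_{i-1} \to \genvb_i \to \genquotvb_i \to 0$; this is the main technical input drawn from \cite{Hong_extvbfinal} and \cite{CT_weakaddandnewton}. Splicing these individual short exact sequences along the chain $\gensubvb = \genvb_0 \subseteq \cdots \subseteq \genvb_\numslope = \genextvb$ produces a filtration of $\genextvb$ whose successive quotients are $\genquotvb_1, \ldots, \genquotvb_\numslope$. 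I would then identify $\genextvb/\gensubvb$ with $\genquotvb$ by induction on $k$: assuming $\genvb_k/\gensubvb \simeq \bigoplus_{j=1}^k \genquotvb_j$, the induced short exact sequence $0 \to \bigoplus_{j=1}^k \genquotvb_j \to \genvb_{k+1}/\gensubvb \to \genquotvb_{k+1} \to 0$ has kernel of minimum slope $\HNslope{\HN(\genquotvb_k)}{1} > \HNslope{\HN(\genquotvb_{k+1})}{1}$ by the descending-slope arrangement of the HN pieces, so Proposition \ref{splitting extension for vector bundles with dominating slopes} forces it to split. Hence $\genextvb/\gensubvb \simeq \bigoplus_{i=1}^\numslope \genquotvb_i = \genquotvb$, giving the desired short exact sequence.

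The main obstacle is the reverse direction, where one must ensure that the semistable-quotient result produces an extension whose middle term is genuinely the prescribed bundle $\genvb_i$ rather than merely some vector bundle with the same HN polygon. This is precisely the purpose of the permutation condition in Definition \ref{suitable permutation of HN polygon for extension}, which constrains the rearrangement of slopes together with the Bruhat-order bound $\genpolygon \geq \HN(\genextvb)$ in a manner that pins down the isomorphism class of the extension. A secondary technical point is the iterated splitting used to identify $\genextvb/\gensubvb$ with $\genquotvb$, which crucially relies on the descending-slope arrangement of the $\genquotvb_i$ and on Proposition \ref{splitting extension for vector bundles with dominating slopes} being applicable at every stage of the induction.
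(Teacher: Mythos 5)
Your argument is correct, and since the paper itself gives no proof of this proposition but imports it from \cite{Hong_extvbfinal} and \cite{CT_weakaddandnewton}, your route is essentially the one taken in those references: necessity by pulling back the HN filtration of $\genquotvb$ along $\genextvb \surj \genquotvb$ and applying Proposition \ref{classification of extensions, necessity condition on E-permutation} to each graded step, and sufficiency by invoking the semistable-quotient case, splicing the resulting extensions, and using Proposition \ref{splitting extension for vector bundles with dominating slopes} to identify $\genextvb/\gensubvb$ with $\bigoplus_i \genquotvb_i \simeq \genquotvb$ via the strictly descending slopes. Do note that the entire weight of the sufficiency direction rests on the semistable-quotient equivalence, which is not stated anywhere in this paper (only the necessity half appears, as Proposition \ref{classification of extensions, necessity condition on E-permutation}); citing it from \cite{Hong_extvbfinal} or \cite{CT_weakaddandnewton} is legitimate given that the proposition itself is a quoted result, but it is the nontrivial input rather than a routine step. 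One correction to your closing discussion: by Theorem \ref{classification of G-bundles on FF curve} and Proposition \ref{isocrystals and vector bundles on FF curve}, a vector bundle on $\schff$ is determined up to isomorphism by its HN polygon, so there is no gap between producing \emph{some} middle term with the right HN polygon and producing the prescribed bundle $\genvb_i$; the permutation condition of Definition \ref{suitable permutation of HN polygon for extension} is an existence criterion for the extension, not a device that pins down its isomorphism class.
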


\section{Nonempty Newton strata in minuscule Schubert cells for $\GL_n$}

In this section, we classify all nonempty Newton strata in an arbitrary minuscule Schubert cell for $\GL_n$ by studying modifications of vector bundles on the Fargues-Fontaine curve. We first establish in \S\ref{subsection for inductive classification of nonempty newton strata} an inductive classification for nonempty Newton strata associated to an arbitrary element of $B(\GL_n)$. We then prove in \S\ref{combinatorics on polygons} some combinatorial lemmas about rationally tuplar polygons and use them in \S\ref{subsection for explicit classification of Newton strata} to give an explicit classification of all nonempty Newton strata associated to a large class of element of $B(\GL_n)$. Throughout this section, we take dominant cocharacters of $\GL_n$ with respect to the standard Borel subgroup of upper triangular matrices and the standard maximal torus of diagonal matrices.

\subsection{An inductive classification of nonempty Newton strata}\label{subsection for inductive classification of nonempty newton strata}$ $

\begin{defn}
Given a rationally tuplar polygon $\genpolygon$ of rank $n$, we define its \emph{dual} to be the rationally tuplar polygon $\dualpolygon{\genpolygon}$ with $\HNslope{\dualpolygon{\genpolygon}}{i} = -\HNslope{\genpolygon}{n+1-i}$ for each $i = 1, \cdots, n$. 
\end{defn}

\begin{example}\label{examples of dual polygons}
We illustrate the notion of duality for the polygons in Example \ref{examples of rationally tuplar polygon}. 
\begin{enumerate}[label=(\arabic*)]
\item For a vector bundle $\genvb$ on $\schff$ of rank $n$, we have $\dualpolygon{\HN(\genvb)} = \HN(\genvb^\vee)$ where $\genvb^\vee$ denotes the dual bundle of $\genvb$. 
\smallskip

\item For a dominant cocharacter $\gencochar$ of $\GL_n$, the polygon $\dualpolygon{\gencochar}$ represents the unique dominant cocharacter in the conjugacy class of $\gencochar^{-1}$. 
\smallskip

\item For arbitrary integers $d$ and $n$, we have $\dualpolygon{\lineseg{d/n}{n}} = \lineseg{-d/n}{n}$. 
\end{enumerate}
\end{example}

\begin{prop}[{\cite[Proposition 5.2]{CFS_admlocus}, \cite[Corollary 5.4]{Viehmann_weakadmlocNewton}}]\label{classification of nonempty newton strata with respect to basic element}
Let $b$ and $\modif{b}$ be elements of $B(\GL_n)$ such that $\genvb_b$ is semistable. Given a dominant cocharacter $\gencochar$ of $\GL_n$, the Newton stratum $\Gr_{\GL_n, \gencochar, b}^{\modif{b}}$ is nonempty if and only if we have 
\begin{equation}\label{newton map inequality for semistable case}
\newtonmap{b} + \dualpolygon{\gencochar} \geq \newtonmap{\modif{b}}
\end{equation}
where $\newtonmap{b}$ and $\newtonmap{\modif{b}}$ respectively denote $\HN(\genvb_b)$ and $\HN(\genvb_{\modif{b}})$.  
\end{prop}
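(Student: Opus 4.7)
The proposition is an equivalence; I would prove necessity via a Mazur--Rapoport--Richartz type inequality on the Fargues--Fontaine curve, and sufficiency by reducing to the minuscule case through an inductive construction.

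\textbf{Necessity.} Assume $\Gr_{\GL_n,\gencochar,b}^{\modif b}$ is nonempty, so by Example~\ref{modification given by a point on BdR+ Grassmannian} the bundle $\genvb_{\modif b}$ is isomorphic to $\genvb_{b,x}$ for some $x \in \Gr_{\GL_n,\gencochar}(\FFresfield)$. Then $\genvb_b$ and $\genvb_{\modif b}$ agree on $\schff \setminus \{\FFclosedpt\}$, and the relative position of their $\BdR^+$-lattices at $\FFclosedpt$ is governed by $\gencochar$. For each $1 \leq j \leq n$, the HN filtration of $\genvb_{\modif b}$ (refined at non-breakpoints if necessary) produces a saturated subbundle $\gensubvb'_j \subset \genvb_{\modif b}$ of rank $j$ and degree $\sum_{i=1}^j \newtonmap[i]{\modif b}$, achieving the value of $\newtonmap{\modif b}$ at $j$. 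Restrict $\gensubvb'_j$ to $\schff \setminus \{\FFclosedpt\}$ and take the saturation of its image in $\genvb_b$ to obtain a rank-$j$ saturated subbundle $\gensubvb_j \subset \genvb_b$. Semistability of $\genvb_b$ of slope $\lambda$ forces $\deg(\gensubvb_j) \leq j\lambda = \sum_{i=1}^j \newtonmap[i]{b}$, while a local computation at $\FFclosedpt$ shows that the lattice invariants of $\gensubvb_j|_{\BdR^+} \subset \gensubvb'_j|_{\BdR^+}$ are dominated by the top $j$ entries of $\dualpolygon{\gencochar}$, giving $\deg(\gensubvb'_j) - \deg(\gensubvb_j) \leq \sum_{i=1}^j \HNslope{\dualpolygon{\gencochar}}{i}$. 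Combining these inequalities yields the Bruhat inequality at level $j$; equality at $j = n$ follows by comparing total degrees.

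\textbf{Sufficiency.} Given the inequality \eqref{newton map inequality for semistable case}, I would construct a modification of $\genvb_b$ at $\FFclosedpt$ of type $\gencochar$ whose output is isomorphic to $\genvb_{\modif b}$. The strategy is to factor $\gencochar$ as a sum of minuscule dominant cocharacters $\gencochar^{(1)}, \ldots, \gencochar^{(r)}$ and build a chain of modifications
$\genvb_b = \genvb_{b_0} \leftarrow \genvb_{b_1} \leftarrow \cdots \leftarrow \genvb_{b_r} = \genvb_{\modif b}$
with the $k$-th arrow of type $\gencochar^{(k)}$ at $\FFclosedpt$. The intermediate Newton classes $b_k \in B(\GL_n)$ must be chosen so that the corresponding Bruhat inequality persists at each stage; this reduces to a combinatorial problem on concave polygons that is feasible because the overall inequality together with semistability of $\genvb_b$ leaves enough slack for a greedy choice. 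At each minuscule step, Proposition~\ref{minuscule BB map is an isom} identifies the Schubert cell with a flag variety, and the required modification is produced by choosing a suitable subspace of the fiber $\genvb_{b_{k-1}}|_{\FFclosedpt}$, with Proposition~\ref{subsheaves and slopewise dominance} used to verify that the modified bundle realizes the prescribed HN polygon $\newtonmap{b_k}$.

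\textbf{Main obstacle.} Necessity is a fairly standard application of the HN formalism combined with a local lattice computation at $\FFclosedpt$. The hard direction is sufficiency: the minuscule factorization of $\gencochar$ is combinatorially easy, but selecting the intermediate Newton classes $b_k$ compatibly and verifying that each minuscule step is realizable as an honest modification requires the full strength of the Bruhat inequality $\newtonmap{b} + \dualpolygon{\gencochar} \geq \newtonmap{\modif b}$ together with the semistability of $\genvb_b$. This is precisely the content of the more technical arguments of Chen--Fargues--Shen and Viehmann, which treat the basic case uniformly in terms of the generalized Kottwitz set.
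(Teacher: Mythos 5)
You should first note that the paper contains no proof of this proposition: it is imported verbatim from Chen--Fargues--Shen and Viehmann, and the only original content is the remark translating their group-theoretic statement (Kottwitz map plus Newton map condition for basic $b$) into the polygon inequality \eqref{newton map inequality for semistable case} for $\GL_n$. So your proposal has to stand as an independent proof, and only half of it does. Your necessity argument is the standard Mazur-type bound and is essentially sound, but with one slip: a semistable constituent of non-integral slope has no subbundle whose degree equals the (non-integral) value of $\newtonmap{\modif{b}}$ at a non-breakpoint, so the HN filtration cannot be ``refined at non-breakpoints'' to produce $\gensubvb'_j$ of degree $\sum_{i=1}^j \newtonmap[i]{\modif{b}}$ for every $j$. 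The repair is easy — since $\newtonmap{b}+\dualpolygon{\gencochar}$ and $\newtonmap{\modif{b}}$ are both concave, it suffices to verify the inequality at the breakpoints of $\newtonmap{\modif{b}}$ and at the endpoint, where genuine HN subbundles exist — but as written the step is false.

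The genuine gap is in sufficiency. Factoring $\gencochar$ into minuscule cocharacters and choosing the intermediate classes $b_k$ ``greedily'' is exactly the hard content, and the tools you cite cannot carry it: Proposition \ref{subsheaves and slopewise dominance} characterizes subsheaves, not modifications at the single point $\FFclosedpt$ of prescribed minuscule type, and the intermediate bundles $\genvb_{b_k}$ are in general no longer semistable. For a non-semistable source, the existence of a minuscule modification with prescribed HN polygons is subtle — the paper's Example \ref{counterexample for converse of minuscule inequalities} exhibits polygons satisfying all the natural inequalities for which no minuscule modification exists, which is precisely why the paper's main theorem is an inductive criterion rather than a closed formula. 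So the claim that ``the overall inequality ... leaves enough slack for a greedy choice'' is unsubstantiated and is where a real argument (of the kind given in Chen--Fargues--Shen via the generalized Kottwitz set, or in Viehmann) is needed; your closing paragraph in effect concedes this by deferring to those references, which means the proposal proves only the necessity direction and otherwise reduces to the citation that the paper itself uses.
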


\begin{remark}
For a reductive group $G$ and a basic element $b \in B(G)$, the results of Chen-Fargues-Shen \cite[Proposition 5.2]{CFS_admlocus} and Viehmann \cite[Corollary 5.4]{Viehmann_weakadmlocNewton} classify all nonempty newton strata with respect to $b$ in an arbitrary Schubert cell in terms of the Kottwitz map and the Newton map defined by Kottwitz \cite{Kottwitz_isocrystal}. In our context, their results are translated to Proposition \ref{classification of nonempty newton strata with respect to basic element} by the following facts:
\begin{enumerate}[label=(\alph*)]
\item An element $b \in B(\GL_n)$ is basic if and only if $\genvb_b$ is semistable. 
\smallskip

\item The condition involving the Kottwitz map holds for all elements in $B(\GL_n)$. 
\smallskip

\item The condition involving the Newton map is equivalent to the inequality \eqref{newton map inequality for semistable case} as $\newtonmap{b}$ and $\newtonmap{\modif{b}}$ are identified with the (concave) Newton polygons of $b$ and $\modif{b}$. 
\end{enumerate}
\end{remark}

\begin{lemma}\label{Newton stratum for central twist}
Let $b$ be an element of $B(\GL_n)$. For $x = \lineseg{1}{n}(\gendRunif) \GL_n(\BdR^+) \in \Gr_{\GL_n, \lineseg{1}{n}}(\FFresfield)$, we have $\HN(\genvb_{b, x}) = \HN(\genvb_b) - \lineseg{1}{n}$. 
\end{lemma}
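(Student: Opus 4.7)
The plan is to observe that $\lineseg{1}{n}(\gendRunif) = \gendRunif \cdot I_n$ is a central (scalar) element of $\GL_n(\BdR)$, so the associated modification of $\genvb_b$ is nothing more than a twist by a line bundle on $\schff$. I will then identify this line bundle as $\Ocal(-1)$ and read off the effect on the HN polygon.

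First, I will unwind Example \ref{modification given by a point on BdR+ Grassmannian} in this special case. Representing $\genvb_b$ via Proposition \ref{Beauville-Laszlo for FF curve} by a triple $(\punctuation{\genvb}, \completion{\genvb}, \gentrivialization)$, the bundle $\genvb_{b, x}$ corresponds to the triple $(\punctuation{\genvb}, \completion{\genvb}, \gendRunif \cdot \gentrivialization)$, and the obvious comparison map realizes $\genvb_{b, x}$ as a subsheaf of $\genvb_b$ that is the identity away from $\FFclosedpt$ and is multiplication by $\gendRunif$ on the completed local ring at $\FFclosedpt$. In particular, the cokernel is a skyscraper sheaf at $\FFclosedpt$ with stalk $\FFresfield^n$.

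Applying the same construction to the trivial line bundle $\Ocal$ produces a subsheaf $\Lcal \inj \Ocal$ whose cokernel is a length-$1$ skyscraper at $\FFclosedpt$, so $\Lcal$ has rank $1$ and degree $-1$; Theorem \ref{classification of G-bundles on FF curve} together with Proposition \ref{isocrystals and vector bundles on FF curve} then forces $\Lcal \simeq \Ocal(-1)$. Since the Beauville--Laszlo gluing is natural with respect to tensor products, the triple for $\genvb_b \otimes \Ocal(-1)$ agrees with the triple for $\genvb_{b, x}$, yielding an isomorphism $\genvb_{b, x} \simeq \genvb_b \otimes \Ocal(-1)$.

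Finally, I will invoke the HN decomposition $\genvb_b \simeq \bigoplus_i \genvb_i$ from Proposition \ref{HN decomp of vector bundles on FF curve}, with each $\genvb_i$ semistable of slope $\genslope_i$. Then $\genvb_b \otimes \Ocal(-1) \simeq \bigoplus_i (\genvb_i \otimes \Ocal(-1))$, and each summand is semistable of slope $\genslope_i - 1$, since tensoring with a line bundle preserves rank and semistability while shifting the degree by the rank. By the uniqueness part of Proposition \ref{HN decomp of vector bundles on FF curve}, every slope of $\HN(\genvb_{b, x})$ is therefore the corresponding slope of $\HN(\genvb_b)$ shifted by $-1$, which is precisely $\HN(\genvb_b) - \lineseg{1}{n}$. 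I expect the only subtlety to be getting the sign right in the identification $\Lcal \simeq \Ocal(-1)$ rather than $\Ocal(1)$, which is settled unambiguously by the explicit inclusion $\genvb_{b, x} \inj \genvb_b$ in the first step.
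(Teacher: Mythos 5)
Your argument is correct, but it takes a genuinely different route from the paper. The paper never identifies the modification explicitly: it writes the HN decomposition $\genvb_b \simeq \bigoplus \genvb_{b_i}$, notes that $x$ decomposes accordingly into the central points $x_i = \lineseg{1}{n_i}(\gendRunif)\GL_{n_i}(\BdR^+)$, applies the basic-case result (Proposition \ref{classification of nonempty newton strata with respect to basic element}) to each semistable summand to get $\HN(\genvb_{b_i,x_i}) \leq \HN(\genvb_{b_i}) - \lineseg{1}{n_i}$ in the Bruhat order, and concludes equality because the right-hand side is a line segment with the same endpoints; the claim follows since $\genvb_{b,x} \simeq \bigoplus \genvb_{b_i,x_i}$. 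You instead recognize $\lineseg{1}{n}(\gendRunif)$ as central and prove the stronger statement $\genvb_{b,x} \simeq \genvb_b \otimes \Ocal(-1)$, then read off the slopes; this is more direct, avoids the Chen--Fargues--Shen/Viehmann input entirely, and gives an explicit isomorphism rather than just the HN polygon. Two small caveats if you want your version to sit inside this paper's toolkit: (a) the fact that twisting by a line bundle preserves semistability and shifts every slope by its degree is standard but not among the facts the paper records (Proposition \ref{isocrystals and vector bundles on FF curve} only asserts compatibility with direct sums, duals, and ranks), so you should either cite it from Fargues--Fontaine or note that $-\otimes\Ocal(-1)$ is an exact equivalence shifting slopes uniformly, which makes it immediate from the HN formalism; (b) whether multiplying the gluing datum by $\gendRunif$ produces a subsheaf or an oversheaf of $\genvb_b$ is purely a matter of the (not fully pinned down) conventions in Example \ref{modification given by a point on BdR+ Grassmannian}, so the sign of $\deg\Lcal$ is not settled by an ``obvious comparison map'' alone; your normalization is nevertheless the one the paper intends, and it is fixed unambiguously by the rank-one case of Proposition \ref{classification of nonempty newton strata with respect to basic element}, which forces the modification of $\strsheaf$ at a point of the Schubert cell for $\gencochar=(1)$ to be $\Ocal(-1)$, exactly as you use.
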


\begin{proof}
Let us write the HN decomposition of $\genvb_b$ as
\[ \genvb_b \simeq \bigoplus_{i=1}^\numslope \genvb_{b_i} \quad \text{ with } b_i \in B(\GL_{n_i}).\]
For each $i = 1, \cdots, \numslope$, we take $x_i:= \lineseg{1}{n_i}(\gendRunif) \GL_{n_i}(\BdR^+) \in \Gr_{\GL_{n_i}, \lineseg{1}{n_i}}(\FFresfield)$. 
Then we have $\HN(\genvb_{b_i, x_i}) \leq \HN(\genvb_{b_i}) - \lineseg{1}{n_i}$ by Proposition \ref{classification of nonempty newton strata with respect to basic element}
and thus find $\HN(\genvb_{b_i, x_i}) = \HN(\genvb_{b_i}) - \lineseg{1}{n_i}$ as $\HN(\genvb_{b_i}) - \lineseg{1}{n_i}$ is a line segment. Now the desired assertion follows by the fact that $\genvb_{b, x}$ is a direct sum of the vector bundles $\genvb_{b_i, x_i}$. 
\end{proof}

\begin{prop}\label{nonemptiness of newton strata via points}
Let $\gencochar$ be a dominant cocharacter of $\GL_n$. For elements $b, \modif{b} \in B(\GL_n)$, the Newton stratum $\Gr_{\GL_n, \gencochar, b}^{\modif{b}}$ is not empty if and only if it contains a $\FFresfield$-point. 
\end{prop}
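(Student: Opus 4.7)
The forward direction is immediate, so I focus on the reverse. Suppose $\Gr_{\GL_n, \gencochar, b}^{\modif{b}}$ is nonempty. As a subdiamond of a locally spatial diamond, it admits a geometric point, furnishing an algebraically closed perfectoid extension $K$ of $\FFresfield$ together with a point $y \in \Gr_{\GL_n, \gencochar, b}^{\modif{b}}(K)$. Applying Proposition \ref{Beauville-Laszlo for FF curve} over $K$ and the construction in Example \ref{modification given by a point on BdR+ Grassmannian}, the point $y$ corresponds to a modification of $\genvb_{b,K}$ at the closed point $\FFclosedpt_K$ of $X_K$ induced by $\FFclosedpt$, with relative position $\gencochar$ and resulting bundle lying in the isomorphism class $\modif{b}$, where the subscript $K$ denotes base change to $X_K$.

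The heart of the argument is then to descend this modification to $X_C$. The idea is that the existence of a $\gencochar$-modification of $\genvb_b$ yielding a bundle of type $\modif{b}$ depends only on the combinatorial data $(b, \gencochar, \modif{b}) \in B(\GL_n) \times \domcocharset \times B(\GL_n)$ and is therefore insensitive to the choice of algebraically closed perfectoid base field. Via Proposition \ref{isocrystals and vector bundles on FF curve} and Theorem \ref{classification of G-bundles on FF curve} applied over both $\FFresfield$ and $K$, the relevant vector bundles on $X_C$ and $X_K$ are in natural bijection with the same set $B(\GL_n)$ compatibly with base change; the existence of a modification of prescribed combinatorial type is then checkable by the intrinsic classifications of subsheaves and extensions recalled in Propositions \ref{subsheaves and slopewise dominance} and \ref{classification of extensions}, all of whose hypotheses and conclusions are phrased solely in terms of HN polygons.

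Granting existence of such a modification over $X_C$, I invoke Proposition \ref{Beauville-Laszlo for FF curve} in reverse over $C$ to convert it into a point $x \in \Gr_{\GL_n, \gencochar}(\FFresfield)$ whose induced modification $\genvb_{b, x}$ has type $\modif{b}$; this is precisely an $\FFresfield$-point of $\Gr_{\GL_n, \gencochar, b}^{\modif{b}}$.

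The principal obstacle is the base-field descent: making rigorous the claim that solvability of the ``$\gencochar$-modification with outcome $\modif{b}$'' problem is intrinsic to the $B(\GL_n)$-data rather than to the particular algebraically closed perfectoid field under consideration. A clean route is to reformulate the modification as a double-coset problem inside $\GL_n(\BdR)$ that already makes sense over $\compmaxunram{\genlocfield}$, and verify directly that its solvability is preserved under perfectoid field extension; alternatively, one can argue by applying the classification results for extensions on the Fargues--Fontaine curve uniformly over $X_C$ and $X_K$, noting that both inputs and outputs of those results are combinatorial invariants of HN polygons.
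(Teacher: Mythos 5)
The statement you are proving is, in this paper, true by fiat: Definition \ref{def of newton stratification} introduces $\Gr_{\GL_n, \gencochar, b}^{\modif{b}}$ as the subdiamond of $\Gr_{\GL_n, \gencochar}$ prescribed by its set of $\FFresfield$-points, and the remark following it observes that a subdiamond of the locally spatial diamond $\Gr_{\GL_n, \gencochar}$ is uniquely determined by that set. Consequently, if the prescribed set of $\FFresfield$-points is empty, the stratum is the empty subdiamond, and the proposition is evident by definition --- which is exactly the paper's one-line proof. No passage to a larger base field and no descent argument is needed, and indeed the paper never even defines $K$-points of the stratum for perfectoid extensions $K$ of $\FFresfield$, so your appeal to ``a geometric point $y \in \Gr_{\GL_n, \gencochar, b}^{\modif{b}}(K)$'' does not have a precise meaning in this setup without first pinning down which subdiamond the stratum is.

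Even taken on its own terms, your argument has a genuine gap at its acknowledged crux. The claim that solvability of the ``$\gencochar$-modification with outcome $\modif{b}$'' problem depends only on $(b, \gencochar, \modif{b})$ and not on the algebraically closed perfectoid base field is asserted, not proved: you offer two possible routes (``a clean route is \dots'') without carrying either out. Moreover, the justification you do give does not stand up: Propositions \ref{subsheaves and slopewise dominance} and \ref{classification of extensions} classify subsheaves and extensions of vector bundles, not modifications of prescribed type $\gencochar$ at $\FFclosedpt$; the translation between the two is precisely the content of the later results of this paper (Proposition \ref{minuscule newton strata and minuscule modification}, Proposition \ref{reduction lemma for minuscule modifications}, and Theorem \ref{classification of nonempty minuscule Newton strata, general case}), which themselves cite the present proposition, so invoking that translation here would be circular --- and for non-minuscule $\gencochar$ no such criterion is established in the paper at all. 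The correct takeaway is that the heavy lifting you attempt belongs elsewhere: here one only needs to read off the definition.
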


\begin{proof}
The assertion is evident by definition. 
\end{proof}

\begin{prop}\label{reduction to minimal minuscule}
Let $\gencochar$ be a dominant cocharacter of $\GL_n$ with nonnegative slopes. For two elements $b, \modif{b} \in B(\GL_n)$, we have the following equivalent conditions:
\begin{enumerate}[label=(\roman*)]
\item\label{nonemptiness of original stratum} $\Gr_{\GL_n, \gencochar, b}^{\modif{b}}$ is nonempty. 
\smallskip

\item\label{nonemptiness of dual stratum} $\Gr_{\GL_n, \dualpolygon{\gencochar}, \modif{b}}^{b}$ is nonempty. 
\smallskip

\item\label{nonemptiness of shifted stratum} $\Gr_{\GL_n, \gencochar+\lineseg{1}{n}, b}^{\genlift{\modif{b}}}$ is nonempty for $\genlift{\modif{b}} \in B(\GL_n)$ with $\HN(\genvb_{\genlift{\modif{b}}}) = \HN(\genvb_{\modif{b}}) - \lineseg{1}{n}$. 
\end{enumerate}
\end{prop}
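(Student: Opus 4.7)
The plan is to work at the level of $\FFresfield$-points throughout, which is allowed by Proposition \ref{nonemptiness of newton strata via points} and Proposition \ref{points of BdR-Grassmannian}. After this reduction, each $\FFresfield$-point of $\Gr_{\GL_n}$ is a coset $g \GL_n(\BdR^+)$ with $g \in \GL_n(\BdR)$, and by the Cartan decomposition it lies in the Schubert cell $\Gr_{\GL_n, \nu}$ precisely when $g \in \GL_n(\BdR^+)\nu(\gendRunif)^{-1}\GL_n(\BdR^+)$.

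For the equivalence (i) $\Leftrightarrow$ (ii), I will use duality. Given $x = g\GL_n(\BdR^+) \in \Gr_{\GL_n, \gencochar, b}^{\modif{b}}(\FFresfield)$, the inverse coset $y := g^{-1}\GL_n(\BdR^+)$ represents the reverse modification: tracing the gluing construction of Example \ref{modification given by a point on BdR+ Grassmannian}, the gluing of $\genvb_{\modif{b}, y}$ is $g^{-1}(g\beta) = \beta$, which is the gluing of $\genvb_b$. Moreover, $g^{-1} \in \GL_n(\BdR^+)\gencochar(\gendRunif)\GL_n(\BdR^+) = \GL_n(\BdR^+)\dualpolygon{\gencochar}(\gendRunif)^{-1}\GL_n(\BdR^+)$, since $\dualpolygon{\gencochar}$ is by definition the dominant representative of the Weyl orbit of $\gencochar^{-1}$. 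Hence $y \in \Gr_{\GL_n, \dualpolygon{\gencochar}, \modif{b}}^{b}(\FFresfield)$, proving (i) $\Rightarrow$ (ii); the converse follows by symmetry, as $\dualpolygon{\dualpolygon{\gencochar}} = \gencochar$.

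For the equivalence (i) $\Leftrightarrow$ (iii), the approach is to compose the $\gencochar$-modification of (i) with the central $\lineseg{1}{n}$-modification provided by Lemma \ref{Newton stratum for central twist}. Given $x = g\GL_n(\BdR^+) \in \Gr_{\GL_n, \gencochar, b}^{\modif{b}}(\FFresfield)$ and the scalar point $y = h\GL_n(\BdR^+) \in \Gr_{\GL_n, \lineseg{1}{n}, \modif{b}}^{\genlift{\modif{b}}}(\FFresfield)$ from Lemma \ref{Newton stratum for central twist} applied to $\modif{b}$, the composed coset $z := (hg)\GL_n(\BdR^+)$ satisfies $\genvb_{b, z} \simeq \genvb_{\modif{b}, y} \simeq \genvb_{\genlift{\modif{b}}}$ by another direct gluing check. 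Since $h$ is central in $\GL_n(\BdR)$, the product of double cosets collapses to the Cartan double coset
\[ h \cdot \GL_n(\BdR^+)\gencochar(\gendRunif)^{-1}\GL_n(\BdR^+) = \GL_n(\BdR^+)(\gencochar + \lineseg{1}{n})(\gendRunif)^{-1}\GL_n(\BdR^+), \]
so $z$ lies in $\Gr_{\GL_n, \gencochar + \lineseg{1}{n}, b}^{\genlift{\modif{b}}}(\FFresfield)$, proving (i) $\Rightarrow$ (iii). The reverse implication is analogous upon composing with $h^{-1}$ on the left, which represents the inverse scalar modification (producing a point in the expected Schubert cell by the same centrality argument).

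The only real content is the bookkeeping of Cartan double cosets; the centrality of $\lineseg{1}{n}(\gendRunif)$ collapses the product calculation to the single identity displayed above, so all three equivalences become formal consequences of Lemma \ref{Newton stratum for central twist} and the definition of $\dualpolygon{\gencochar}$.
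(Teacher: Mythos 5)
Your argument is correct and is essentially the paper's own proof: both reduce to $\FFresfield$-points via Proposition \ref{nonemptiness of newton strata via points}, get (i)$\Leftrightarrow$(ii) by inverting the representative so that the Cartan double coset of $\gencochar$ becomes that of $\dualpolygon{\gencochar}$, and get (i)$\Leftrightarrow$(iii) by multiplying by the central element from Lemma \ref{Newton stratum for central twist} and matching HN polygons. The only caveat is a normalization slip, not a gap: under the convention you fix at the start (a point of $\Gr_{\GL_n,\nu}$ has representative in $\GL_n(\BdR^+)\nu(\gendRunif)^{-1}\GL_n(\BdR^+)$), left multiplication by $h=\lineseg{1}{n}(\gendRunif)$ lands in the cell of $\gencochar-\lineseg{1}{n}$ rather than $\gencochar+\lineseg{1}{n}$, so you should either take $h=\lineseg{1}{n}(\gendRunif)^{-1}$ or work throughout with the opposite normalization that Lemma \ref{Newton stratum for central twist} and the paper's own proofs implicitly use.
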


\begin{proof}
For $x = g\gencochar(\gendRunif)G(\BdR^+) \in \Gr_{\GL_n, \gencochar, b}^{\modif{b}}(\FFresfield)$, we take $\dualpolygon{x} := g^{-1}\dualpolygon{\gencochar}(\gendRunif)G(\BdR^+) \in \Gr_{\GL_n, \dualpolygon{\gencochar}}(\FFresfield)$ and find $\genvb_{\modif{b}, \dualpolygon{x}} \simeq \genvb_b$, thereby deducing that $\dualpolygon{x}$ lies in $\Gr_{\GL_n, \dualpolygon{\gencochar}, \modif{b}}^{b}(\FFresfield)$. Similarly, every point in $\Gr_{\GL_n, \dualpolygon{\gencochar}, \modif{b}}^{b}(\FFresfield)$ gives rise to a point in $\Gr_{\GL_n, \gencochar, b}^{\modif{b}}(\FFresfield)$. Hence by Proposition \ref{nonemptiness of newton strata via points} we establish the equivalence of the conditions \ref{nonemptiness of original stratum} and \ref{nonemptiness of dual stratum}.

Now it remains to verify the equivalence of the conditions \ref{nonemptiness of original stratum} and \ref{nonemptiness of shifted stratum}. For every $x = g\gencochar(\gendRunif)G(\BdR^+) \in \Gr_{\GL_n, \gencochar, b}^{\modif{b}}(\FFresfield)$, we take $\genlift{x} := g\gencochar(\gendRunif)\lineseg{1}{n}(\gendRunif) G(\BdR^+) \in \Gr_{\GL_n, \gencochar+\lineseg{1}{n}}(\FFresfield)$ and find $\genvb_{b, \genlift{x}} \simeq \genvb_{\genlift{\modif{b}}}$ by Lemma \ref{Newton stratum for central twist}, thereby deducing that $\genlift{x}$ lies in $\Gr_{\GL_n, \gencochar+\lineseg{1}{n}, b}^{\genlift{\modif{b}}}(\FFresfield)$. Conversely, for every $\genlift{x} := g\gencochar(\gendRunif)\lineseg{1}{n}(\gendRunif) G(\BdR^+) \in \Gr_{\GL_n, \gencochar+\lineseg{1}{n}}(\FFresfield)$ we take $x := g\gencochar(\gendRunif)G(\BdR^+) \in \Gr_{\GL_n, \gencochar, b}^{\modif{b}}(\FFresfield)$ and find $\genvb_{b, x} \simeq \genvb_{\modif{b}}$ by Lemma \ref{Newton stratum for central twist}, thereby deducing that $x$ lies in $\Gr_{\GL_n, \gencochar, b}^{\modif{b}}(\FFresfield)$. Hence we complete the proof by Proposition \ref{nonemptiness of newton strata via points}. 
\end{proof}

\begin{remark}
In light of Proposition \ref{reduction to minimal minuscule}, for our desired classification it suffices to consider minuscule cocharacters with slopes $0$ and $1$. 
\end{remark}


\begin{defn}
Let $\genvb$ be a vector bundle on $\schff$ of rank $n$.
\begin{enumerate}[label=(\arabic*)]
\item Given a dominant cocharacter $\gencochar$ of $\GL_n$, we define an \emph{effective modification} of $\genvb$ at $\FFclosedpt$ of \emph{type $\gencochar$} to be an injective $\strsheaf[\schff]$-module map $\modif{\genvb} \inj \genvb$ whose cokernel is the skyscraper sheaf at $\FFclosedpt$ with value $\displaystyle \bigoplus_{i=1}^n \BdR^+/ \gendRunif^{\HNslope{\gencochar}{i}} \BdR^+$. 
\smallskip

\item We say that an effective modification $\modif{\genvb} \inj \genvb$ at $\FFclosedpt$ is \emph{minuscule of degree $d$} if its type is minuscule of degree $d$ with slopes $0$ and $1$. 
\end{enumerate} 
\end{defn}

\begin{prop}\label{minuscule newton strata and minuscule modification}
Take a dominant cocharacter $\gencochar$ of $\GL_n$ and two elements $b, \modif{b} \in B(\GL_n)$. 
\begin{enumerate}[label=(\arabic*)]
\item If $\gencochar$ has nonnegative slopes, the Newton stratum $\Gr_{\GL_n, \gencochar, b}^{\modif{b}}$ is nonempty if and only if there exists an effective modification $\genvb_{\modif{b}} \inj \genvb_b$ at $\FFclosedpt$ of type $\gencochar$. 
\smallskip

\item If $\gencochar$ is minuscule with slopes $0$ and $1$, the Newton stratum $\Gr_{\GL_n, \gencochar, b}^{\modif{b}}$ is nonempty if and only if there exists a minuscule effective modification $\genvb_{\modif{b}} \inj \genvb_b$ at $\FFclosedpt$. 
\end{enumerate}
\end{prop}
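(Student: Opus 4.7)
The plan is to establish part (1) by directly translating points of the Schubert cell $\Gr_{\GL_n, \gencochar}(\FFresfield)$ into effective modifications of $\genvb_b$ via the lattice description of the $\BdR^+$-Grassmannian combined with Proposition \ref{Beauville-Laszlo for FF curve}. Part (2) then follows as a special case, since a minuscule dominant cocharacter with slopes $0$ and $1$ has nonnegative slopes, and an effective modification of such a type is precisely a minuscule effective modification (the degree being forced by the ranks and degrees of $\genvb_b$ and $\genvb_{\modif{b}}$).

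Fix a Beauville--Laszlo triple $(\punctuation{\genvb}, \completion{\genvb}, \gentrivialization)$ for $\genvb_b$ with $\completion{\genvb}$ trivialized so that $\gentrivialization \in \GL_n(\BdR)$ and $\completion{\genvb}$ becomes the standard lattice $L_0 := (\BdR^+)^n$ in $\BdR^n$. By Example \ref{modification given by a point on BdR+ Grassmannian}, a point $x = g\GL_n(\BdR^+) \in \Gr_{\GL_n}(\FFresfield)$ produces the bundle $\genvb_{b,x}$ with triple $(\punctuation{\genvb}, \completion{\genvb}, g\gentrivialization)$; unwinding the gluings shows that, after transport along $\gentrivialization$, $\genvb_{b,x}$ and $\genvb_b$ correspond to the lattices $g^{-1}L_0$ and $L_0$ in the common generic fiber $\BdR^n$. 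For $x \in \Gr_{\GL_n, \gencochar, b}^{\modif{b}}(\FFresfield)$, writing $g = h_1\gencochar(\gendRunif)^{-1}h_2$ with $h_1, h_2 \in \GL_n(\BdR^+)$ yields
\[
g^{-1}L_0 \;=\; h_2^{-1}\gencochar(\gendRunif)L_0 \;=\; h_2^{-1}\bigoplus_{i=1}^n \gendRunif^{\HNslope{\gencochar}{i}}\BdR^+,
\]
and the nonnegativity of the slopes $\HNslope{\gencochar}{i}$ together with $h_2^{-1} \in \GL_n(\BdR^+)$ forces $g^{-1}L_0 \subseteq L_0$ with cokernel isomorphic to $\bigoplus_i \BdR^+/\gendRunif^{\HNslope{\gencochar}{i}}\BdR^+$. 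This exhibits a natural injection $\genvb_{b,x} \hookrightarrow \genvb_b$ of type $\gencochar$, and composing with the isomorphism $\genvb_{b,x} \simeq \genvb_{\modif{b}}$ (valid by definition of the Newton stratum) supplies the desired effective modification $\genvb_{\modif{b}} \hookrightarrow \genvb_b$.

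For the converse direction, suppose $\genvb_{\modif{b}} \hookrightarrow \genvb_b$ is an effective modification of type $\gencochar$. Localizing at $\FFclosedpt$ and transporting along $\gentrivialization$ produces lattices $L' \subset L_0$ in $\BdR^n$ with $L_0/L' \cong \bigoplus_i \BdR^+/\gendRunif^{\HNslope{\gencochar}{i}}\BdR^+$; the elementary divisor theorem over the complete discrete valuation ring $\BdR^+$ then yields $h \in \GL_n(\BdR^+)$ with $L' = h\gencochar(\gendRunif)L_0$. Setting $g := \gencochar(\gendRunif)^{-1}h^{-1}$ (which lies in $\GL_n(\BdR^+)\gencochar(\gendRunif)^{-1}\GL_n(\BdR^+)$) and $x := g\GL_n(\BdR^+) \in \Gr_{\GL_n, \gencochar}(\FFresfield)$, one verifies $g^{-1}L_0 = L'$, so the Beauville--Laszlo data for $\genvb_{b,x}$ literally reproduces the given subbundle $\genvb_{\modif{b}}$ of $\genvb_b$; in particular $\genvb_{b,x} \simeq \genvb_{\modif{b}}$ and $x \in \Gr_{\GL_n, \gencochar, b}^{\modif{b}}(\FFresfield)$. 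The only subtle point throughout is the bookkeeping of conventions: both the $\gencochar(\gendRunif)^{-1}$ in the definition of the Schubert cell and the gluing $g\gentrivialization$ in the modification each contribute an inversion, and these cancel so that nonnegative slopes of $\gencochar$ correspond precisely to the stated effective direction.
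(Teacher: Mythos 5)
Your argument is correct and is essentially the paper's own proof: both directions run through the Beauville--Laszlo description (Proposition \ref{Beauville-Laszlo for FF curve}) of $\genvb_b$ and $\genvb_{b,x}$ as triples sharing the same bundle on $\schff - \FFclosedpt$, with your explicit lattice computation ($g^{-1}L_0$ versus $L_0$, plus elementary divisors over $\BdR^+$) simply spelling out what the paper compresses into "gives rise to an effective modification of type $\gencochar$" and "conjugate $\gentrivialization_b$ by a suitable element of $\GL_n(\BdR^+)$." The only point of divergence is expository care with the inverse in the Schubert cell and the gluing convention, which you track consistently and which the paper leaves implicit.
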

\begin{proof}
As the second statement is a special case of the first statement, it suffices to prove the first statement. 
If $\Gr_{\GL_n, \gencochar, b}^{\modif{b}}$ is not empty, Proposition \ref{nonemptiness of newton strata via points} yields a point $x \in \Gr_{\GL_n, \gencochar, b}^{\modif{b}}(\FFresfield)$, which 
gives rise to an effective modification $\genvb_{b, x} \inj \genvb_b$ at $\FFclosedpt$ of type $\gencochar$. Let us now assume for the converse that there exists an effective modification $\genvb_{\modif{b}} \inj \genvb_b$ at $\FFclosedpt$ of type $\gencochar$. Take triples $(\punctuation{\genvb}_b, \completion{\genvb_b}, \gentrivialization_b)$ and $(\punctuation{\genvb}_{\modif{b}}, \completion{\genvb_{\modif{b}}}, \gentrivialization_{\modif{b}})$ which respectively correspond to $\genvb_b$ and $\genvb_{\modif{b}}$ under the bijection in Proposition \ref{Beauville-Laszlo for FF curve}. We may set $\punctuation{\genvb}_{b} = \punctuation{\genvb}_{\modif{b}}$ since the map $\genvb_{\modif{b}} \inj \genvb_b$ is an isomorphism on $\schff - \FFclosedpt$. Then we conjugate $\gentrivialization_b$ by a suitable element in $\GL_n(\BdR^+)$ to write $\gentrivialization_{\modif{b}} = g\gencochar(t)\gentrivialization_b$ for some $g \in \GL_n(\BdR^+)$, and in turn find $g\gencochar(t)\GL_n(\BdR^+) \in \Gr_{\GL_n, \gencochar, b}^{\modif{b}}(\FFresfield)$ to complete the proof. 
\end{proof}


\begin{prop}\label{reduction lemma for minuscule modifications}
Let $\genvb$ and $\modif{\genvb}$ be vector bundles on $\schff$ of rank $n$. 
Take a direct sum decomposition
\begin{equation}\label{reduction lemma direct decomposition with a semistable factor} 
\genvb \simeq \gensubvb \oplus \genquotvb
\end{equation}
such that $\HN(\gensubvb)$ coincides with the line segment of maximal slope in $\HN(\genvb)$. 
There exists a minuscule effective modification $\modif{\genvb} \inj \genvb$ at $\FFclosedpt$ if and only if there exist minuscule effective modifications $\modif{\gensubvb} \inj \gensubvb$ and $\modif{\genquotvb} \inj \genquotvb$ at $\FFclosedpt$ with a short exact sequence
\[0 \longrightarrow \modif{\gensubvb} \longrightarrow \modif{\genvb} \longrightarrow \modif{\genquotvb} \longrightarrow 0.\]
%
\end{prop}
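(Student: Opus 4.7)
The plan is to prove each direction separately, with the forward direction following from a snake-lemma computation and the reverse direction reducing to a splitting of extensions enabled by the maximality of the slopes of $\HN(\gensubvb)$.

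For the ``only if'' direction, given a minuscule effective modification $\genmodif: \modif{\genvb} \inj \genvb$ of degree $d$, I would set $\modif{\gensubvb} := \modif{\genvb} \cap \gensubvb$ inside $\genvb$ and let $\modif{\genquotvb}$ be the image of $\modif{\genvb}$ under the projection $\genvb \surj \genquotvb$ coming from the decomposition \eqref{reduction lemma direct decomposition with a semistable factor}. The snake lemma applied to the resulting commutative diagram of short exact sequences shows that the cokernels of the outer inclusions $\modif{\gensubvb} \inj \gensubvb$ and $\modif{\genquotvb} \inj \genquotvb$ are, respectively, a subsheaf and a quotient of $\coker(\genmodif) \simeq \FFresfield^{\oplus d}$, hence finite-dimensional $\FFresfield$-vector spaces supported at $\FFclosedpt$ and annihilated by $\gendRunif$. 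In particular, $\modif{\gensubvb}$ and $\modif{\genquotvb}$ have the same ranks as $\gensubvb$ and $\genquotvb$, so they are locally free on the Dedekind scheme $\schff$, and the inclusions $\modif{\gensubvb} \inj \gensubvb$ and $\modif{\genquotvb} \inj \genquotvb$ are minuscule effective modifications fitting into the required short exact sequence.

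For the ``if'' direction, I would push out the given short exact sequence $0 \to \modif{\gensubvb} \to \modif{\genvb} \to \modif{\genquotvb} \to 0$ along $\modif{\gensubvb} \inj \gensubvb$ to obtain an extension $0 \to \gensubvb \to E \to \modif{\genquotvb} \to 0$ together with an injection $\modif{\genvb} \inj E$ whose cokernel is identified with $\coker(\modif{\gensubvb} \inj \gensubvb)$. If this pushout sequence splits, so that $E \simeq \gensubvb \oplus \modif{\genquotvb}$, then composing $\modif{\genvb} \inj E$ with the natural inclusion $\gensubvb \oplus \modif{\genquotvb} \inj \gensubvb \oplus \genquotvb = \genvb$ produces a map $\modif{\genvb} \inj \genvb$ whose cokernel is a $\gendRunif$-torsion skyscraper at $\FFclosedpt$, and hence a minuscule effective modification. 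The main obstacle is the splitting of the pushout extension, which I would handle via Proposition \ref{splitting extension for vector bundles with dominating slopes}: by hypothesis the semistable bundle $\gensubvb$ has slope strictly greater than every slope in $\HN(\genquotvb)$, while Proposition \ref{subsheaves and slopewise dominance} applied to $\modif{\genquotvb} \inj \genquotvb$ gives $\HN(\genquotvb) \slodom \HN(\modif{\genquotvb})$, so the maximum slope of $\modif{\genquotvb}$ is bounded by the maximum slope of $\genquotvb$ and is therefore strictly less than the slope of $\gensubvb$.
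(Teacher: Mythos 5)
Your ``only if'' direction is fine: intersecting with $\gensubvb$ and projecting to $\genquotvb$, the snake lemma identifies $\gensubvb/\modif{\gensubvb}$ and $\genquotvb/\modif{\genquotvb}$ with a subsheaf and a quotient of the skyscraper $\FFresfield^{\oplus d}$, so both induced modifications are minuscule; this is the direction the paper itself calls evident. The problem is in the ``if'' direction, which is the whole content of the proposition. After pushing out and splitting (the splitting argument via Proposition \ref{splitting extension for vector bundles with dominating slopes} and Proposition \ref{subsheaves and slopewise dominance} is correct), you compose $\modif{\genvb} \inj \gensubvb \oplus \modif{\genquotvb} \inj \gensubvb \oplus \genquotvb$ and assert that the cokernel is ``a $\gendRunif$-torsion skyscraper at $\FFclosedpt$, and hence a minuscule effective modification.'' That inference is false. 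The cokernel is an extension of $\genquotvb/\modif{\genquotvb} \simeq \FFresfield^{\oplus d_2}$ by $\gensubvb/\modif{\gensubvb} \simeq \FFresfield^{\oplus d_1}$ as $\BdR^+$-modules, and such an extension need not be annihilated by $\gendRunif$: it can contain summands $\BdR^+/\gendRunif^2$ (locally, if $S \subseteq \BdR^+ \oplus \gendRunif\BdR^+$ is the graph-type submodule $\{(x, \gendRunif a) : x \equiv a \bmod \gendRunif\}$, then both successive quotients are $\FFresfield$ but $(\BdR^+\oplus\BdR^+)/S \simeq \BdR^+/\gendRunif^2$). Whether this happens is governed by how $\modif{\genvb}$ sits inside the split pushout near $\FFclosedpt$, i.e.\ by a homomorphism $\modif{\genquotvb}_\FFclosedpt \to \gensubvb_\FFclosedpt/\modif{\gensubvb}_\FFclosedpt$ which need not kill $\gendRunif\genquotvb_\FFclosedpt$, and your argument gives no control over it. So what you have produced is only an effective modification of some type of degree $d$, not necessarily a minuscule one.

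This is precisely the difficulty the paper's proof is designed to overcome: the given $\modif{\gensubvb} \inj \gensubvb$, $\modif{\genquotvb} \inj \genquotvb$ and the abstract extension cannot simply be concatenated; they must be \emph{adjusted} so that the resulting map $\modif{\genvb} \inj \genvb$ has minuscule type. The paper does this not by a direct sheaf-theoretic argument but by translating the statement, via the Bialynicki-Birula isomorphism (Proposition \ref{minuscule BB map is an isom}), into the identity \eqref{Chen-Tong identity} of Chen-Tong on images of Newton strata in the flag variety under the projection $\mathrm{pr}_{\genparabolic, w}$ to the Levi, observing that the proof in loc.\ cit.\ works without the semistability hypothesis on $\genvb$. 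To repair your argument you would need to show that, after replacing the modifications and/or the embedding by suitable ones with the same isomorphism classes of $\modif{\gensubvb}$, $\modif{\genquotvb}$, $\modif{\genvb}$, the local obstruction above can be made to vanish --- and that is essentially equivalent to the Chen-Tong input, not something that follows from the pushout construction alone.
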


\begin{proof}
The assertion is essentially a result of Chen-Tong \cite[Proposition 4.6]{CT_weakaddandnewton}. Our main observation is that, while the result in loc. cit. for $G = \GL_n$ only concerns the case where $\genvb$ is semistable, its proof remains valid without the semistability assumption on $\genvb$. For convenience of the readers, we explain how the result in loc. cit. is translated to the desired assertion. 

Let us take $b, \modif{b} \in B(\GL_n)$ with $\genvb \simeq \genvb_b$ and $\modif{\genvb} \simeq \genvb_{\modif{b}}$. We write $r$ for the rank of $\gensubvb$ and $\genparabolic$ for the standard parabolic subgroup of $\GL_n$ with Levi subgroup
\[\genlevi := \GL_r \times \GL_{n-r} \subseteq \GL_n.\]
The direct sum decomposition \eqref{reduction lemma direct decomposition with a semistable factor} corresponds to an element $b_\genlevi \in B(\genlevi)$ which maps to $b$ under the natural map $B(\genlevi) \longrightarrow B(G)$. 
Let $E(\genlevi, \modif{b})$ denote the set of all elements $\modif{b}_\genlevi \in B(\genlevi)$ which correspond to a direct sum $\modif{\gensubvb} \oplus \modif{\genquotvb}$ for some vector bundles $\modif{\gensubvb}$ of rank $r$ and $\modif{\genquotvb}$ of rank $n-r$ such that $\modif{\genvb} \simeq \genvb_{\modif{b}}$ arises as an extension of $\modif{\genquotvb}$ by $\modif{\gensubvb}$. 

We take $\gencochar$ to be the minuscule dominant cocharacter of $\GL_n$ of degree $d:= \deg(\genvb) - \deg(\modif{\genvb})$ with slopes $0$ and $1$. In addition, we choose an arbitrary element $w$ in the Weyl group of $\GL_n$ and denote by $\gencochar^w$ the dominant cocharacter of $\genlevi$ whose $\genlevi$-conjugacy class contains the $w$-conjugate of $\gencochar$. 
We have $\gencochar^w = (\gencochar_1, \gencochar_2)$ for some minuscule dominant cocharacters $\gencochar_1$ of $\GL_r$ and $\gencochar_2$ of $\GL_{n-r}$. We denote the degrees of $\gencochar_1$ and $\gencochar_2$ respectively by $d_1$ and $d_2$. 

Let $\flagvar(\GL_n, \gencochar)_\genparabolic^w$ be the subscheme of $\flagvar(\GL_n, \gencochar)$ given by the 
$\genparabolic$-orbit of $\genparabolic_{\gencochar^w}$. 
The projection to $\genlevi$ induces a map
\[ \mathrm{pr}_{\genparabolic, w}: \flagvar(\GL_n, \gencochar)_\genparabolic^w \longrightarrow \flagvar(\genlevi, \gencochar^w).\]
The aforementioned result of Chen-Tong \cite[Proposition 4.6]{CT_weakaddandnewton} yields an identity
\begin{equation}\label{Chen-Tong identity}
\mathrm{pr}_{\genparabolic, w}\left(\flagvar(\GL_n, \gencochar)_\genparabolic^w \cap \flagvar(\GL_n, \gencochar, b)^{\modif{b}}\right) = \bigsqcup_{\modif{b}_\genlevi \in E(\genlevi, \modif{b})} \flagvar(\genlevi, \gencochar^w, b_\genlevi)^{\modif{b}_\genlevi}.
\end{equation}
%
%
As both $\gencochar$ and $\gencochar^w$ are minuscule, Proposition \ref{minuscule BB map is an isom} implies that the Newton strata on $\Gr_{\GL_n, \gencochar}$ and $\Gr_{\genlevi, \gencochar^w}$ are respectively identified with the Newton strata on $\flagvar(\GL_n, \gencochar)$ and $\flagvar(\genlevi, \gencochar^w)$. 
Hence the identity \eqref{Chen-Tong identity} shows that for minuscule effective modifications $\alpha: \modif{\gensubvb} \inj \gensubvb$ and $\beta: \modif{\genquotvb} \inj \genquotvb$ at $\FFclosedpt$ of degrees $d_1$ and $d_2$ we have the following equivalent conditions:
\begin{enumerate}[label=(\roman*)]
\item There exists a
commutative diagram of short exact sequences
\begin{equation}\label{extension diagram of minuscule modifications}
\begin{tikzcd}
0 \arrow[r]& \gensubvb \arrow[r] & \genvb \arrow[r]& \genquotvb \arrow[r]& 0\\
0 \arrow[r]& \modif{\gensubvb} \arrow[r] \arrow[u, "\alpha", hookrightarrow] & \modif{\genvb} \arrow[r] \arrow[u, hookrightarrow] & \modif{\genquotvb} \arrow[r] \arrow[u, "\beta", hookrightarrow]& 0
\end{tikzcd}
\end{equation}
with the top row given by the direct sum decomposition \eqref{reduction lemma direct decomposition with a semistable factor} and the middle vertical arrow being a minuscule effective modification at $\FFclosedpt$ (of degree $d$). 
\smallskip

\item There exists a
short exact sequence 
\[0 \longrightarrow \modif{\gensubvb} \longrightarrow \modif{\genextvb} \longrightarrow \modif{\genquotvb} \longrightarrow 0.\]
\end{enumerate}
Since $w$ is arbitrary, we deduce the desired assertion.
\end{proof}

\begin{remark}
The necessity part of Proposition \ref{reduction lemma for minuscule modifications} is evident as every minuscule effective modification $\modif{\genvb} \inj \genvb$ at $\FFclosedpt$ gives rise to a commutative diagram \eqref{extension diagram of minuscule modifications}. The main point of Proposition \ref{reduction lemma for minuscule modifications} is the sufficiency part, which is essentially equivalent to the identity \eqref{Chen-Tong identity} by Chen-Tong \cite{CT_weakaddandnewton}. 
\end{remark}

\begin{prop}[{\cite[\S5.5.2.1]{FF_curve}}]\label{additivity of degree for modifications}
Let $\genvb$ be a vector bundle on $\schff$. For every minuscule effective modification $\modif{\genvb} \inj \genvb$ at $\FFclosedpt$, its degree is equal to $\deg(\genvb) - \deg(\modif{\genvb})$. 
\end{prop}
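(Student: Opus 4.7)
The plan is to reduce to the case of line bundles via the determinant functor and then invoke the Fargues--Fontaine classification of line bundles on $\schff$.

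First I would unwind the definition. A minuscule effective modification of degree $d$ has associated cocharacter with exactly $d$ slopes equal to $1$ and $n-d$ slopes equal to $0$, so its cokernel is the skyscraper sheaf at $\FFclosedpt$
\[\Qcal \;:=\; \genvb/\modif{\genvb} \;\simeq\; \bigoplus_{i=1}^{d}\BdR^+/\gendRunif\BdR^+ \;\simeq\; \FFresfield^{\oplus d},\]
a torsion $\strsheaf[\schff]$-module of $\BdR^+$-length $d$, and the modification fits into a short exact sequence $0 \to \modif{\genvb} \to \genvb \to \Qcal \to 0$ of coherent sheaves on the Dedekind scheme $\schff$.

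Next I would apply the determinant functor to this sequence to produce an injection of line bundles $\det(\modif{\genvb}) \hookrightarrow \det(\genvb)$ whose cokernel is again a torsion sheaf at $\FFclosedpt$ of $\BdR^+$-length $d$. Since $\det$ preserves degree of vector bundles on $\schff$ (as one sees from the Fargues--Fontaine decomposition $\genvb \simeq \bigoplus \strsheaf[\schff](a_i/b_i)$ together with $\det(\strsheaf[\schff](a/b)) \simeq \strsheaf[\schff](a)$), the claim reduces to showing that every injection of line bundles on $\schff$ with cokernel a torsion sheaf at $\FFclosedpt$ of length $d$ raises degree by exactly $d$.

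Finally, I would conclude by invoking the Fargues--Fontaine classification of line bundles on $\schff$: each line bundle is isomorphic to $\strsheaf[\schff](k)$ for a unique $k \in \Z$, with $\deg(\strsheaf[\schff](k)) = k$. An injection $\strsheaf[\schff](k_1) \hookrightarrow \strsheaf[\schff](k_2)$ corresponds to a nonzero global section of $\strsheaf[\schff](k_2-k_1)$ whose divisor on $\schff$ has total degree $k_2-k_1$; if the cokernel is concentrated at $\FFclosedpt$ with length $d$, the divisor equals $d\cdot[\FFclosedpt]$, forcing $k_2-k_1=d$. The main (and essentially only) obstacle is identifying the intrinsic $\deg$ defined via the associated isocrystal with the classical divisorial degree on line bundles; this is a foundational input from the Fargues--Fontaine theory of the curve, after which the argument above is formal.
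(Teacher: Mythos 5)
Your argument is correct, but note how it compares to the paper: the paper does not prove this proposition at all --- it is quoted directly from Fargues--Fontaine \cite[\S 5.5.2.1]{FF_curve}, where the statement is part of the degree theory of the curve. What you have written is essentially a reconstruction of that standard argument: pass to determinants, so that the injection $\modif{\genvb} \inj \genvb$ with skyscraper cokernel of $\BdR^+$-length $d$ at $\FFclosedpt$ induces an injection $\det(\modif{\genvb}) \inj \det(\genvb)$ whose cokernel again has length $d$ (locally this is the elementary-divisor computation $\mathrm{length}(\mathrm{coker}\,A) = v(\det A)$ over the discrete valuation ring at $\FFclosedpt$, which you should at least name), then use that $\det$ preserves degree and that on line bundles the degree can be computed divisorially, so that a section with divisor $d\cdot[\FFclosedpt]$ forces the degree to jump by exactly $d$. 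The two inputs you are implicitly leaning on --- that every closed point of $\schff$ has degree $1$ and principal divisors have degree $0$ (``completeness'' of the curve), and that the divisorial degree of $\strsheaf[\schff](k)$ agrees with the degree the paper defines through the associated isocrystal --- are exactly the foundational facts established in \cite{FF_curve}, and you correctly flag the latter as the only nonformal step. So there is no gap; your proof is a legitimate (and essentially the original) argument for a statement the paper simply cites.
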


\begin{lemma}\label{minuscule modification of a semistable bundle}
Let $\genvb$ and $\modif{\genvb}$ be vector bundles on $\schff$ of rank $n$ such that $\genvb$ is semistable. 
Take $\gencochar$ to be the minuscule dominant cocharacter of $\GL_n$ of degree $d:= \deg(\genvb) - \deg(\modif{\genvb})$ with slopes $0$ and $1$. 
There exists a minuscule effective modification $\modif{\genvb} \inj \genvb$ at $\FFclosedpt$ if and only if 
$\genvb$ and $\modif{\genvb}$ satisfy the following equivalent inequalities:
\begin{equation}\label{equiv inequalities for minuscule modification of a semistable bundle}
\HN(\genvb) + \dualpolygon{\gencochar} \geq \HN(\modif{\genvb}) \quad\text{ and } \quad
\HN(\modif{\genvb}) + \lineseg{1}{n} \slodom \HN(\genvb) \slodom \HN(\modif{\genvb}).
\end{equation}
\end{lemma}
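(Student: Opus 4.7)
The plan is to reduce the existence of a minuscule effective modification to a question about Newton strata via Proposition \ref{minuscule newton strata and minuscule modification}, apply Proposition \ref{classification of nonempty newton strata with respect to basic element} to handle the semistable case, and then verify that the two inequalities displayed in \eqref{equiv inequalities for minuscule modification of a semistable bundle} are genuinely equivalent by a short convexity argument.

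First I would choose $b, \modif{b} \in B(\GL_n)$ with $\genvb \simeq \genvb_b$ and $\modif{\genvb} \simeq \genvb_{\modif{b}}$. Semistability of $\genvb$ means that $b$ is basic, so Proposition \ref{minuscule newton strata and minuscule modification} combined with Proposition \ref{classification of nonempty newton strata with respect to basic element} tells us that a minuscule effective modification $\modif{\genvb} \inj \genvb$ at $\FFclosedpt$ exists if and only if $\HN(\genvb) + \dualpolygon{\gencochar} \geq \HN(\modif{\genvb})$, which is the first of the two inequalities in \eqref{equiv inequalities for minuscule modification of a semistable bundle}. Note that Proposition \ref{additivity of degree for modifications} forces the degree of any such modification to be $d = \deg(\genvb) - \deg(\modif{\genvb})$, matching the choice of $\gencochar$.

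It then remains to prove that the two inequalities in \eqref{equiv inequalities for minuscule modification of a semistable bundle} are equivalent. Set $\mu := \deg(\genvb)/n$, so that $\HN(\genvb)$ is a line segment of slope $\mu$, and write $P := \HN(\genvb) + \dualpolygon{\gencochar}$ and $Q := \HN(\modif{\genvb})$. A direct computation shows that $\HNslope{P}{i} = \mu$ for $i \leq n-d$ and $\HNslope{P}{i} = \mu - 1$ for $i > n-d$, and that $P$ and $Q$ have the same total degree $\deg(\modif{\genvb})$. Assuming the slopewise inequalities $\mu - 1 \leq \HNslope{Q}{i} \leq \mu$ for all $i$, one verifies the Bruhat dominance $P \geq Q$ by splitting the range of $j$ at $j = n-d$: for $j \leq n-d$ the upper bound gives $\sum_{i \leq j} \HNslope{Q}{i} \leq j\mu = \sum_{i \leq j} \HNslope{P}{i}$, while for $j > n-d$ the lower bound combined with the degree identity yields $\sum_{i \leq j} \HNslope{Q}{i} \leq j(\mu-1) + n - d = \sum_{i \leq j} \HNslope{P}{i}$. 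Conversely, if $P \geq Q$, then Bruhat at $j = 1$ combined with concavity of $Q$ gives the upper bound $\HNslope{Q}{i} \leq \mu$, and Bruhat at $j = n-1$ combined with concavity gives the lower bound $\HNslope{Q}{i} \geq \mu - 1$ whenever $d \geq 1$; the edge case $d = 0$ is handled separately by noting that the degree identity together with the upper bound forces $Q = P$ outright.

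The main obstacle, such as it is, lies in bookkeeping around the edge cases $d = 0$ and $d = n$ in the combinatorial equivalence; the rest of the proof is a direct invocation of the two cited propositions on Newton strata.
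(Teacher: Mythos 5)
Your proposal is correct and follows essentially the same route as the paper: the paper also combines Proposition \ref{classification of nonempty newton strata with respect to basic element}, Proposition \ref{minuscule newton strata and minuscule modification} and Proposition \ref{additivity of degree for modifications} to identify existence of the modification with the first inequality, and then notes that $\HN(\genvb)+\dualpolygon{\gencochar}$ has only the two slopes $\mu$ and $\mu-1$ so that the equivalence of the two inequalities follows from concavity of HN polygons. Your explicit partial-sum verification simply fills in the details the paper leaves as ``not hard to verify.''
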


\begin{proof}
By Proposition \ref{classification of nonempty newton strata with respect to basic element}, Proposition \ref{minuscule newton strata and minuscule modification} and Proposition \ref{additivity of degree for modifications}, there exists a minuscule effective modification $\modif{\genvb} \inj \genvb$ at $\FFclosedpt$ if and only if 
$\genvb$ and $\modif{\genvb}$ satisfy the first inequality in \eqref{equiv inequalities for minuscule modification of a semistable bundle}. 
If we write $\genslope$ for the slope of the line segment $\HN(\genvb)$, the polygon $\HN(\genvb) + \dualpolygon{\gencochar}$ has two distinct slopes $\genslope$ and $\genslope-1$. Hence it is not hard to verify the equivalence of 
the two inequalities in \eqref{equiv inequalities for minuscule modification of a semistable bundle}
by the concavity of HN polygons, 
thereby deducing the desired assertion. 
\end{proof}

\begin{theorem}\label{classification of nonempty minuscule Newton strata, general case}
Let $\gencochar$ be a minuscule dominant cocharacter of $\GL_n$ with slopes $0$ and $1$. 
Consider two arbitrary elements $b, \modif{b} \in B(\GL_n)$. 
Take a direct sum decomposition
\[\genvb_b \simeq \genvb_a \oplus \genvb_c \quad \text{ with } \quad a \in B(\GL_r) \text{ and } c \in B(\GL_{n-r})\]
such that $\HN(\genvb_a)$ coincides with the line segment of maximal slope in $\HN(\genvb_b)$.
%
\begin{enumerate}[label=(\arabic*)]
\item If the degree of $\gencochar$ is not equal to $\deg(\genvb_b) - \deg(\genvb_{\modif{b}})$, the Newton stratum $\Gr_{\GL_n, \gencochar, b}^{\modif{b}}$ is empty.
\smallskip

\item If the degree of $\gencochar$ is equal to $\deg(\genvb_b) - \deg(\genvb_{\modif{b}})$ the Newton stratum $\Gr_{\GL_n, \gencochar, b}^{\modif{b}}$ is nonempty if and only if there exist $\modif{a} \in B(\GL_{r})$ and $\modif{c} \in B(\GL_{n- r})$ with the following properties:
\smallskip
\begin{enumerate}[label=(\roman*)]
\item\label{slopewise dominance inequality for direct summand of max slope} We have $\HN(\genvb_{\modif{a}}) + \lineseg{1}{r} \slodom \HN(\genvb_{a}) \slodom \HN(\genvb_{\modif{a}})$.
\smallskip

\item\label{condition on HN polygons for extensions of modifications} If we write the HN decomposition of $\genvb_{\modif{c}}$ as 
\[ \genvb_{\modif{c}} \simeq \bigoplus_{i=1}^\numslope \genquotvb_i\]
where $\genquotvb_i$ are arranged in order of descending slope, there exists a sequence of vector bundles $\genvb_{\modif{a}} = \genvb_0, \genvb_1, \cdots, \genvb_\numslope = \genvb_b$ on $\schff$ such that $\HN(\genvb_{i-1} \oplus \genquotvb_i)$ has an $(\genextvb_{i-1}, \genextvb_i, \genquotvb_i)$-permutation for each $i = 1, \cdots, r$. 
\smallskip

\item\label{nonemptiness of reduced newton stratum} The Newton stratum $\Gr_{\GL_n, \genred{\gencochar}, c}^{\modif{c}}$ is nonempty where $\genred{\gencochar}$ is a minuscule dominant cocharacter of $\GL_{n-r}$ of degree $\genred{d}:= \deg(\genvb_c) - \deg(\genvb_{\modif{c}})$ with slopes $0$ and $1$. 
\end{enumerate}
\end{enumerate}
\end{theorem}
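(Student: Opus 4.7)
The strategy is to convert the nonemptiness of the Newton stratum into the existence of a minuscule effective modification $\genvb_{\modif{b}} \inj \genvb_b$ at $\FFclosedpt$ via Proposition \ref{minuscule newton strata and minuscule modification}, then apply the reduction lemma (Proposition \ref{reduction lemma for minuscule modifications}) to split this modification along the direct sum $\genvb_b \simeq \genvb_a \oplus \genvb_c$, and finally interpret each resulting piece via criteria already established earlier in the paper.

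Part (1) is immediate: any $\FFresfield$-point of $\Gr_{\GL_n, \gencochar, b}^{\modif{b}}$ corresponds via Proposition \ref{minuscule newton strata and minuscule modification} to a minuscule effective modification $\genvb_{\modif{b}} \inj \genvb_b$ at $\FFclosedpt$ of type $\gencochar$, and by Proposition \ref{additivity of degree for modifications} the degree of such a modification must equal $\deg(\genvb_b) - \deg(\genvb_{\modif{b}})$; a mismatch with the degree of $\gencochar$ therefore forces emptiness.

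For part (2), assume the degree condition holds. Proposition \ref{reduction lemma for minuscule modifications} shows that the existence of a minuscule effective modification $\genvb_{\modif{b}} \inj \genvb_b$ at $\FFclosedpt$ is equivalent to the existence of minuscule effective modifications $\modif{\genvb_a} \inj \genvb_a$ and $\modif{\genvb_c} \inj \genvb_c$ together with a short exact sequence
\[ 0 \longrightarrow \modif{\genvb_a} \longrightarrow \genvb_{\modif{b}} \longrightarrow \modif{\genvb_c} \longrightarrow 0.\]
Introducing $\modif{a} \in B(\GL_r)$ and $\modif{c} \in B(\GL_{n-r})$ with $\genvb_{\modif{a}} \simeq \modif{\genvb_a}$ and $\genvb_{\modif{c}} \simeq \modif{\genvb_c}$ via Theorem \ref{classification of G-bundles on FF curve}, I would translate the three ingredients as follows: the existence of $\modif{\genvb_a} \inj \genvb_a$ becomes condition \ref{slopewise dominance inequality for direct summand of max slope} via Lemma \ref{minuscule modification of a semistable bundle}, whose semistability hypothesis is satisfied because $\HN(\genvb_a)$ is by construction a line segment; the existence of $\modif{\genvb_c} \inj \genvb_c$ becomes condition \ref{nonemptiness of reduced newton stratum} via another application of Proposition \ref{minuscule newton strata and minuscule modification}, with the degree $\genred{d}$ of $\genred{\gencochar}$ forced by Proposition \ref{additivity of degree for modifications}; and the existence of the short exact sequence becomes condition \ref{condition on HN polygons for extensions of modifications} via Proposition \ref{classification of extensions} applied with subbundle $\genvb_{\modif{a}}$, extension $\genvb_{\modif{b}}$, and quotient $\genvb_{\modif{c}}$.

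The bulk of the content is packaged in the cited propositions, so no individual step should be technically hard. The main obstacle is the bookkeeping required to match the three separate conditions with the output of the reduction lemma: one must verify that the degrees of the three pieces are consistent (which follows from additivity of degree in the short exact sequence together with the hypothesis of part (2)) and that the telescoping sequence in condition \ref{condition on HN polygons for extensions of modifications} is formulated compatibly with the HN decomposition of $\genvb_{\modif{c}}$ written in descending slope order, as required by Proposition \ref{classification of extensions}.
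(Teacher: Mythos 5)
Your proposal is correct and follows exactly the route the paper takes: the paper's own proof simply cites Proposition \ref{minuscule newton strata and minuscule modification}, Proposition \ref{additivity of degree for modifications}, Proposition \ref{reduction lemma for minuscule modifications}, Lemma \ref{minuscule modification of a semistable bundle}, and Proposition \ref{classification of extensions}, which is precisely the translation you spell out (stratum nonemptiness $\leftrightarrow$ minuscule effective modification, split along $\genvb_a \oplus \genvb_c$, then conditions \ref{slopewise dominance inequality for direct summand of max slope}, \ref{condition on HN polygons for extensions of modifications}, \ref{nonemptiness of reduced newton stratum} for the three resulting pieces). Your bookkeeping of degrees and of the descending-slope ordering needed for Proposition \ref{classification of extensions} matches the intended reading of the statement, so there is nothing to add.
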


\begin{proof}
The assertion is straightforward to verify by Proposition \ref{classification of extensions}, Proposition \ref{minuscule newton strata and minuscule modification}, Proposition \ref{reduction lemma for minuscule modifications}, Proposition \ref{additivity of degree for modifications}, and Lemma \ref{minuscule modification of a semistable bundle}. 
\end{proof}

\begin{remark}
The elements $a \in B(\GL_r)$ and $c \in B(\GL_{n-r})$ are uniquely determined by the HN decomposition of $\genvb_b$. In addition, the Schubert cell $\Gr_{\GL_n, \genred{\gencochar}}$ contains finitely many nonemtpy Newton strata, as easily seen by Proposition \ref{subsheaves and slopewise dominance} and Proposition \ref{minuscule newton strata and minuscule modification}. Hence the conditions \ref{slopewise dominance inequality for direct summand of max slope} and \ref{nonemptiness of reduced newton stratum} together yield finitely many candidates for $\modif{a} \in B(\GL_{r})$ and $\modif{c} \in B(\GL_{n- r})$. We can thus use Theorem \ref{classification of nonempty minuscule Newton strata, general case} to inductively classify all nonempty Newton strata in an arbitrary minuscule Schubert cell of $\Gr_{\GL_n}$. 
\end{remark}

\subsection{Concave rationally tuplar polygons}\label{combinatorics on polygons}$ $

\begin{defn}
Given a rationally tuplar polygon $\genpolygon$,
%
we define its \emph{concave rearrangement} to be the rationally tuplar polygon $\concrearr{\genpolygon}$ such that the tuple $(\HNslope{\concrearr{\genpolygon}}{i})$ is the rearrangement of $(\HNslope{\genpolygon}{i})$ in descending order. 
\end{defn}

%

\begin{lemma}\label{maximality of concave rearrangement}
For every rationally tuplar polygon $\genpolygon$, we have $\concrearr{\genpolygon} \geq \genpolygon$. 
\end{lemma}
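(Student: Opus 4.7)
The plan is to unwind the two definitions and observe that the statement reduces to a completely elementary fact about finite sequences of rationals; there is no real obstacle. Concretely, by the definition of the Bruhat order I need to verify that
\[
\sum_{i=1}^{j} \HNslope{\concrearr{\genpolygon}}{i} \geq \sum_{i=1}^{j} \HNslope{\genpolygon}{i} \qquad \text{for each } j = 1, \ldots, n,
\]
with equality when $j=n$.

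The $j=n$ case is immediate: the tuple $(\HNslope{\concrearr{\genpolygon}}{i})_{i=1}^{n}$ is by construction a permutation of $(\HNslope{\genpolygon}{i})_{i=1}^{n}$, so the total sums agree. In particular, $\concrearr{\genpolygon}$ and $\genpolygon$ share the same endpoints $(0,0)$ and $(n, \deg(\genpolygon))$, confirming that $\concrearr{\genpolygon}$ is indeed a rationally tuplar polygon of the same rank and degree.

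For $j < n$, the key observation is that the first $j$ slopes of $\concrearr{\genpolygon}$ are, by definition of the rearrangement in descending order, precisely the $j$ largest entries of the multiset $\{\HNslope{\genpolygon}{1}, \ldots, \HNslope{\genpolygon}{n}\}$. The first $j$ slopes of $\genpolygon$ form some other $j$-element sub-multiset of this same collection. Since the sum of the $j$ largest elements of a finite multiset of real numbers is at least as large as the sum of any other $j$-element sub-multiset (a one-line swapping argument: replacing any smaller element by an unused larger one can only increase the sum), the required inequality follows immediately. This completes the proof.
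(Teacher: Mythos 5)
Your argument is correct and is exactly the elementary verification that the paper leaves implicit: the paper's proof is simply ``evident by definition,'' and your unwinding of the Bruhat order together with the observation that the sum of the $j$ largest slopes dominates any partial sum of $j$ slopes is precisely that verification. Nothing is missing; you have just written out the details the paper omits.
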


\begin{proof}
The assertion is evident by definition. 
\end{proof}

\begin{remark}
In fact, $\concrearr{\genpolygon}$ is the maximal rearrangement of $\genpolygon$ with respect to the Bruhat order. 
\end{remark}

\begin{defn}
Given two rationally tuplar polygon $\genpolygon$ and $\gensecpolygon$, 
we define their direct sum $\genpolygon \oplus \gensecpolygon$ to be the concave rearrangement of the concatenation of $\genpolygon$ and $\gensecpolygon$. 
\end{defn}

\begin{example}\label{direct sum of HN polygons}
Let us record some important examples of direct sums for our purpose. 
\begin{enumerate}[label=(\arabic*)]
\item For two vector bundles $\genvb$ and $\gensecvb$ on $\schff$, we have $\HN(\genvb \oplus \gensecvb) = \HN(\genvb) \oplus \HN(\gensecvb)$. 
\smallskip

\item For two minuscule dominant cocharacters $\gencochar_1$ of $\GL_{n_1}$ and $\gencochar_2$ of $\GL_{n_2}$ with slopes $0$ and $1$, their direct sum (as a rationally tuplar polygon) is a minuscule dominant cocharacter of $\GL_{n_1+n_2}$ with slopes $0$ and $1$. 
\end{enumerate}
\end{example}

\begin{lemma}\label{bruhat order and direct sums}
Given concave rationally tuplar polygons $\genpolygon, \modif{\genpolygon}, \gensecpolygon$ and $\modif{\gensecpolygon}$ with $\genpolygon \geq \modif{\genpolygon}$ and $\gensecpolygon \geq \modif{\gensecpolygon}$, we have $\genpolygon \oplus \gensecpolygon \geq \modif{\genpolygon} \oplus \modif{\gensecpolygon}$. 
\end{lemma}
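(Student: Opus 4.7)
The plan is to reduce the desired Bruhat inequality to a pointwise comparison of polygon values via a max-formula for the direct sum. First I would record the following translation: since $\genpolygon$ is a concave rationally tuplar polygon of some rank $n_1$, its slopes $\HNslope{\genpolygon}{i}$ are weakly decreasing, so the partial sum $\sum_{i=1}^{j}\HNslope{\genpolygon}{i}$ coincides both with the value $\genpolygon(j)$ of (the underlying function of) $\genpolygon$ at $j$ and with the sum of the $j$ largest slopes of $\genpolygon$. Consequently, the hypothesis $\genpolygon\geq\modif{\genpolygon}$ is equivalent to $\genpolygon(j)\geq\modif{\genpolygon}(j)$ for every integer $j\in[0,n_1]$, with equality at $j=n_1$; analogously for $\gensecpolygon$, $\modif{\gensecpolygon}$ of some common rank $n_2$.

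Next I would establish the max-formula
\begin{equation*}
(\genpolygon\oplus\gensecpolygon)(j)\;=\;\max_{\substack{j_1+j_2=j\\ 0\leq j_1\leq n_1,\,0\leq j_2\leq n_2}}\bigl[\genpolygon(j_1)+\gensecpolygon(j_2)\bigr]
\end{equation*}
for every integer $0\leq j\leq n_1+n_2$. By definition, $(\genpolygon\oplus\gensecpolygon)(j)$ is the sum of the $j$ largest entries in the combined multiset of slopes of $\genpolygon$ and $\gensecpolygon$. Any selection of $j$ entries from this combined multiset splits uniquely as $j_1$ entries drawn from $\genpolygon$ and $j_2=j-j_1$ entries drawn from $\gensecpolygon$; within each factor the sum is maximized by taking the top $j_i$ slopes, whose total equals $\genpolygon(j_1)$ or $\gensecpolygon(j_2)$ by the preceding paragraph. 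Maximizing over admissible splits then recovers the top-$j$ sum and yields the formula.

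With the formula in hand, the conclusion is immediate. For each admissible split $(j_1,j_2)$ one has $\genpolygon(j_1)+\gensecpolygon(j_2)\geq\modif{\genpolygon}(j_1)+\modif{\gensecpolygon}(j_2)$ by hypothesis, and maximizing over splits gives $(\genpolygon\oplus\gensecpolygon)(j)\geq(\modif{\genpolygon}\oplus\modif{\gensecpolygon})(j)$ for every $j$. At $j=n_1+n_2$ the only admissible split is $(n_1,n_2)$, so the endpoint equalities $\genpolygon(n_1)=\modif{\genpolygon}(n_1)$ and $\gensecpolygon(n_2)=\modif{\gensecpolygon}(n_2)$ force equality. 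Translating these pointwise inequalities back into partial sums of slopes yields $\genpolygon\oplus\gensecpolygon\geq\modif{\genpolygon}\oplus\modif{\gensecpolygon}$. The one step requiring genuine care is the justification of the max-formula, since the direct sum is defined through concave rearrangement rather than explicitly as a sum of top slopes; once this is unpacked, the rest is pure monotonicity.
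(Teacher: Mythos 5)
Your proof is correct, and it takes a genuinely different (if closely related) route from the paper's. The paper introduces an auxiliary polygon $\genthirdpolygon$ by interleaving the slopes of $\genpolygon$ and $\gensecpolygon$ into the positions that the slopes of $\modif{\genpolygon}$ and $\modif{\gensecpolygon}$ occupy inside $\modif{\genpolygon}\oplus\modif{\gensecpolygon}$, checks partial-sum by partial-sum that $\genthirdpolygon \geq \modif{\genpolygon}\oplus\modif{\gensecpolygon}$, and then concludes via $\genpolygon\oplus\gensecpolygon = \concrearr{\genthirdpolygon}\geq\genthirdpolygon$ using Lemma \ref{maximality of concave rearrangement}. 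You instead establish the exact identity $(\genpolygon\oplus\gensecpolygon)(j)=\max_{j_1+j_2=j}\bigl[\genpolygon(j_1)+\gensecpolygon(j_2)\bigr]$ --- valid precisely because concavity makes the $j_i$-th partial sum equal to the top-$j_i$ slope sum --- and then deduce the Bruhat inequality by pointwise monotonicity of the maximum, so you never need the intermediate interleaved polygon nor the maximality-of-rearrangement lemma. Both arguments rest on the same combinatorial fact (an initial segment of the merged descending slope sequence is a union of initial segments of the two factors), but your sup-convolution formula packages it as a reusable identity and makes transparent exactly where concavity enters: it is the step that fails in the counterexample given in the remark following the lemma. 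One small presentational point: in the last step you implicitly apply the max-formula to the primed pair as well, to bound $(\modif{\genpolygon}\oplus\modif{\gensecpolygon})(j)$ above by the maximum over splits; this is legitimate since all four polygons are assumed concave, but it deserves an explicit sentence.
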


\begin{proof}
Let $m$ and $n$ respectively denote the ranks of $\genpolygon$ and $\gensecpolygon$. Take two sets $A$ and $B$ which form a partition of the set $\{1, \cdots, m+n\}$ with
\[ (\HNslope{\modif{\genpolygon}\oplus \modif{\gensecpolygon}}{i})_{i \in A} = (\HNslope{\modif{\genpolygon}}{i}) \quad \text{ and } \quad (\HNslope{\modif{\genpolygon}\oplus \modif{\gensecpolygon}}{i})_{i \in B} = (\HNslope{\modif{\gensecpolygon}}{i}).\]
Let $\genthirdpolygon$ to be the rationally tuplar polygon of rank $m+n$ with
\[ (\HNslope{\genthirdpolygon}{i})_{i \in A} = (\HNslope{\genpolygon}{i}) \quad \text{ and } \quad (\HNslope{\genthirdpolygon}{i})_{i \in B} = (\HNslope{\gensecpolygon}{i}).\]
Since $\genpolygon, \modif{\genpolygon}, \gensecpolygon$ and $\modif{\gensecpolygon}$ are all concave, the inequalities $\genpolygon \geq \modif{\genpolygon}$ and $\gensecpolygon \geq \modif{\gensecpolygon}$ together imply $\genthirdpolygon \geq \modif{\genpolygon} \oplus \modif{\gensecpolygon}$. 
Now we find $\genpolygon \oplus \gensecpolygon = \concrearr{\genthirdpolygon} \geq \genthirdpolygon$ by Lemma \ref{maximality of concave rearrangement} to complete the proof. 
\end{proof}

\begin{remark}
Lemma \ref{bruhat order and direct sums} does not hold without the concavity assumption. For example, if we take $\genpolygon = \gensecpolygon = \lineseg{d/r}{r}$ for some integers $r$ and $d$ with $r>0$, for arbitrary nonlinear convex polygons $\modif{\genpolygon}$ and $\modif{\gensecpolygon}$ of rank $r$ and degree $d$ we do not have $\genpolygon \oplus \gensecpolygon \geq \modif{\genpolygon} \oplus \modif{\gensecpolygon}$ despite having $\genpolygon \geq \modif{\genpolygon}$ and $\gensecpolygon \geq \modif{\gensecpolygon}$, as illustrated in Figure \ref{counter example for direct sum inequality without concavity assumption}. 
\begin{figure}[H]
\begin{tikzpicture}[scale=1]	
		\coordinate (left) at (0, 0);
		\coordinate (p1) at (2, 1);
		\coordinate (right) at (4, 2);

		\coordinate (q1) at (1.5, 0);
		\coordinate (q2) at (3.5, 1);
		
		\coordinate (r1) at (1, 2);
				
		\draw[step=1cm,thick, color=red] (left) -- (p1);
		\draw[step=1cm,thick, color=orange] (p1) -- (right);
		\draw[step=1cm,thick, color=blue] (left) -- (q1) -- (p1);
		\draw[step=1cm,thick, color=violet] (p1) -- (q2) -- (right);
		
		\draw[step=1cm,thick, color=teal] (left) -- (r1) -- (right);

		\draw [fill] (q1) circle [radius=0.05];		
		\draw [fill] (q2) circle [radius=0.05];		
		\draw [fill] (r1) circle [radius=0.05];			
		\draw [fill] (left) circle [radius=0.05];
		\draw [fill] (right) circle [radius=0.05];		
		
		\draw [fill] (p1) circle [radius=0.05];		

		\path (left) ++(1, 0.7) node {\color{red}$\genpolygon$};
		\path (p1) ++(0.8, 0.7) node {\color{orange}$\gensecpolygon$};		
		\path (q1) ++(0.3, 0) node {\color{blue}$\modif{\genpolygon}$};
		\path (q2) ++(0.3, 0) node {\color{violet}$\modif{\gensecpolygon}$};
		\path (r1) ++(-1, -0.2) node {\color{teal}$\modif{\genpolygon} \oplus \modif{\gensecpolygon}$};
\end{tikzpicture}
\caption{A counter example for Lemma \ref{bruhat order and direct sums} without the concavity assumption}\label{counter example for direct sum inequality without concavity assumption}
\end{figure}
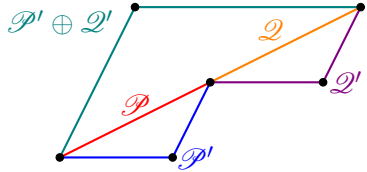
\end{remark}

\begin{lemma}\label{inequality on direct sum and sum}
Let $\genpolygon$ and $\gensecpolygon$ be rationally tuplar polygons of rank $m$ and $n$. For arbitrary rationally tuplar polygons $\modif{\genpolygon}$ of rank $m$ and $\modif{\gensecpolygon}$ of rank $n$, we have
\[ (\genpolygon \oplus \gensecpolygon) + (\modif{\genpolygon} \oplus \modif{\gensecpolygon}) \geq (\genpolygon + \modif{\genpolygon}) \oplus (\gensecpolygon + \modif{\gensecpolygon}).\]
\end{lemma}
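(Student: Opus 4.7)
The plan is to unpack the definitions and directly verify the Bruhat order inequality by comparing partial sums of slopes at each position. Let $\alpha_1, \ldots, \alpha_m$ and $\alpha'_1, \ldots, \alpha'_m$ denote the slopes of $\genpolygon$ and $\modif{\genpolygon}$ respectively, and let $\beta_1, \ldots, \beta_n$ and $\beta'_1, \ldots, \beta'_n$ denote the slopes of $\gensecpolygon$ and $\modif{\gensecpolygon}$. Write $\gamma_1 \geq \gamma_2 \geq \cdots \geq \gamma_{m+n}$ for the descending rearrangement of the multiset $\{\alpha_i\}_{i=1}^m \cup \{\beta_j\}_{j=1}^n$, and define $\gamma'_1 \geq \cdots \geq \gamma'_{m+n}$ analogously from the primed slopes. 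Then by the definition of direct sum as concave rearrangement, the $i$-th slope of $\genpolygon \oplus \gensecpolygon$ is $\gamma_i$ and the $i$-th slope of $\modif{\genpolygon} \oplus \modif{\gensecpolygon}$ is $\gamma'_i$. Since the sum of two polygons of the same rank is taken pointwise on slopes, the $i$-th slope of the left-hand side $(\genpolygon \oplus \gensecpolygon) + (\modif{\genpolygon} \oplus \modif{\gensecpolygon})$ equals $\gamma_i + \gamma'_i$.

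Now fix $k \in \{1, \ldots, m+n\}$ and compare partial sums at position $k$. By definition, the partial sum on the right-hand side is the sum of the $k$ largest entries in the multiset $\{\alpha_i + \alpha'_i\}_{i=1}^m \cup \{\beta_j + \beta'_j\}_{j=1}^n$; say these are realized by subsets $S \subseteq \{1, \ldots, m\}$ and $T \subseteq \{1, \ldots, n\}$ with $|S| + |T| = k$. I then rewrite
\[
\sum_{i \in S}(\alpha_i + \alpha'_i) + \sum_{j \in T}(\beta_j + \beta'_j) = \Bigl(\sum_{i \in S}\alpha_i + \sum_{j \in T}\beta_j\Bigr) + \Bigl(\sum_{i \in S}\alpha'_i + \sum_{j \in T}\beta'_j\Bigr).
\]
Each parenthesized expression is a sum of $k$ elements chosen from a multiset of size $m+n$, so it is bounded above by the sum of the top $k$ elements, yielding
\[
\sum_{i \in S}\alpha_i + \sum_{j \in T}\beta_j \leq \sum_{i=1}^k \gamma_i \qquad \text{and} \qquad \sum_{i \in S}\alpha'_i + \sum_{j \in T}\beta'_j \leq \sum_{i=1}^k \gamma'_i.
\]
Adding these two bounds shows that the right-hand side partial sum is at most $\sum_{i=1}^k (\gamma_i + \gamma'_i)$, which is precisely the left-hand side partial sum at $k$. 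Equality at $k = m+n$ is immediate, since both sides telescope to $\sum_i \alpha_i + \sum_j \beta_j + \sum_i \alpha'_i + \sum_j \beta'_j$; this completes the verification of the Bruhat inequality.

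There is no serious obstacle here: the argument is purely combinatorial once the definitions are untangled. The only point demanding care is that the direct sum operation reorders slopes via concave rearrangement while the pointwise sum does not, so one must consistently distinguish the sorted sequences $\gamma_i, \gamma'_i$ from the original index-matched sequences $\alpha_i + \alpha'_i$ and $\beta_j + \beta'_j$ when setting up the bound.
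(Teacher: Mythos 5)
Your proof is correct and follows essentially the same route as the paper: the paper likewise writes each slope of $(\genpolygon + \modif{\genpolygon}) \oplus (\gensecpolygon + \modif{\gensecpolygon})$ as a sum of one slope from $\genpolygon \oplus \gensecpolygon$ and one from $\modif{\genpolygon} \oplus \modif{\gensecpolygon}$ (via permutations) and then concludes from the descending-slope (concavity) property, which is exactly your top-$k$ partial-sum bound spelled out in more detail. No gap; your explicit handling of the endpoint equality and the distinction between sorted and index-matched slope sequences is a fine elaboration of the paper's terser argument.
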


\begin{proof}
We observe that there exist permutations $\genperm$ and $\modif{\genperm}$ of the set $\{1, \cdots, m+n\}$ with 
\[ \HNslope{(\genpolygon + \modif{\genpolygon}) \oplus (\gensecpolygon + \modif{\gensecpolygon})}{i} = \HNslope{\genpolygon \oplus \gensecpolygon}{\genperm(i)} + \HNslope{\modif{\genpolygon} \oplus \modif{\gensecpolygon}}{\modif{\genperm}(i)} \quad \text{ for each } i = 1, \cdots, m+n,\]
and consequently deduce the desired assertion by the concavity of $\genpolygon \oplus \gensecpolygon$ and $\modif{\genpolygon} \oplus \modif{\gensecpolygon}$. 
\end{proof}

\subsection{An explicit classification of nonempty Newton strata }\label{subsection for explicit classification of Newton strata}$ $

\begin{lemma}\label{symmetry of minuscule modifications}
Let $\genvb$ be a vector bundle on $\schff$ of rank $n$. Every minuscule effective modification $\modif{\genvb} \inj \genvb$ at $\FFclosedpt$ gives rise to a minuscule effective modification $\genlift{\genvb} \inj \modif{\genvb}$ at $\FFclosedpt$ with $\HN(\genlift{\genvb}) = \HN(\genvb) - \lineseg{1}{n}$.
\end{lemma}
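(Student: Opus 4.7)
The plan is to construct $\genlift{\genvb}$ explicitly via the central modification by the cocharacter $\lineseg{1}{n}$ and to verify that it factors through $\modif{\genvb}$. First, I would apply the Beauville-Laszlo description (Proposition \ref{Beauville-Laszlo for FF curve}) to pass to the local picture at $\FFclosedpt$: $\genvb$ corresponds to a triple $(\punctuation{\genvb}, \completion{\genvb}, \gentrivialization)$, and the minuscule effective modification $\modif{\genvb} \inj \genvb$ corresponds to replacing the lattice $\Lambda$ cut out by $\completion{\genvb}$ inside $\punctuation{\genvb}|_{\Spec(\BdR)} \cong \BdR^n$ by a $\BdR^+$-sublattice $\modif{\Lambda} \subseteq \Lambda$ with $\Lambda/\modif{\Lambda}$ an $\FFresfield$-vector space of dimension $d := \deg(\genvb) - \deg(\modif{\genvb})$.

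The key structural observation is that the minuscule hypothesis is equivalent to the inclusion $\gendRunif \Lambda \subseteq \modif{\Lambda}$, since a finitely generated $\BdR^+$-module is an $\FFresfield$-vector space if and only if it is killed by $\gendRunif$. Granting this, I would define $\genlift{\genvb}$ to be the modification of $\genvb$ at $\FFclosedpt$ whose lattice is $\gendRunif \Lambda$, i.e., the bundle $\genvb_{b, x}$ from Example \ref{modification given by a point on BdR+ Grassmannian} with $x = \lineseg{1}{n}(\gendRunif)\GL_n(\BdR^+)$. The chain $\gendRunif \Lambda \subseteq \modif{\Lambda} \subseteq \Lambda$ yields a canonical injection $\genlift{\genvb} \inj \modif{\genvb}$ of vector bundles on $\schff$; its cokernel at $\FFclosedpt$ is $\modif{\Lambda}/\gendRunif \Lambda$, which is a quotient of $\Lambda/\gendRunif \Lambda \cong \FFresfield^n$ and hence an $\FFresfield$-vector space of dimension $n-d$. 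This shows that $\genlift{\genvb} \inj \modif{\genvb}$ is indeed a minuscule effective modification at $\FFclosedpt$.

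For the HN statement, I would invoke Lemma \ref{Newton stratum for central twist} directly: by construction $\genlift{\genvb}$ is the modification of $\genvb$ induced by the central point $\lineseg{1}{n}(\gendRunif)\GL_n(\BdR^+)$, so $\HN(\genlift{\genvb}) = \HN(\genvb) - \lineseg{1}{n}$. The whole argument reduces to elementary linear algebra over the discrete valuation ring $\BdR^+$ combined with the previously established central twist lemma, and I do not foresee any serious obstacle; the only substantive conceptual input is the observation that the minuscule condition $\gendRunif \Lambda \subseteq \modif{\Lambda}$ is precisely what makes the central modification $\gendRunif \Lambda \subseteq \Lambda$ factor through $\modif{\Lambda}$.
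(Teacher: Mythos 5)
Your proof is correct, but it takes a different route from the paper. You argue directly with $\BdR^+$-lattices via Beauville--Laszlo: the minuscule condition is exactly $\gendRunif\Lambda \subseteq \modif{\Lambda} \subseteq \Lambda$, so the central modification $\genlift{\genvb}$ (the bundle with lattice $\gendRunif\Lambda$, i.e.\ $\genvb$ twisted by the ideal sheaf of $\FFclosedpt$) factors through $\modif{\genvb}$ with cokernel the skyscraper $\modif{\Lambda}/\gendRunif\Lambda$, an $\FFresfield$-vector space of dimension $n-d$; the HN statement then comes from Lemma \ref{Newton stratum for central twist}. (One small slip: $\modif{\Lambda}/\gendRunif\Lambda$ is a subspace of $\Lambda/\gendRunif\Lambda \cong \FFresfield^{\oplus n}$, not a quotient; the conclusion that it is an $\FFresfield$-vector space of dimension $n-d$ is of course unaffected.) The paper instead stays entirely within the Newton-stratum dictionary: the given modification produces a point of $\Gr_{\GL_n, \gencochar, b}^{\modif{b}}$ by Proposition \ref{minuscule newton strata and minuscule modification} and Proposition \ref{additivity of degree for modifications}, Proposition \ref{reduction to minimal minuscule} converts it into a point of $\Gr_{\GL_n, \dualpolygon{\gencochar}+\lineseg{1}{n}, \modif{b}}^{\genlift{b}}$, and Proposition \ref{minuscule newton strata and minuscule modification} translates that back into the desired minuscule effective modification $\genlift{\genvb} \inj \modif{\genvb}$. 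Your approach is more elementary and makes the geometric content transparent (the ``dual'' modification is just the chain $\gendRunif\Lambda \subseteq \modif{\Lambda} \subseteq \Lambda$), at the cost of having to fix the lattice/coset conventions yourself; the paper's approach is a two-line corollary of results it has already established and never leaves the group-theoretic formalism, which is convenient since those same propositions are reused throughout Section 3.
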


\begin{proof}
Let $\gencochar$ be the minuscule dominant cocharacter of $\GL_n$ of degree $d: = \deg(\genvb) - \deg(\modif{\genvb})$ with slopes $0$ and $1$. Take elements $b, \modif{b}$ and $\genlift{b}$ in $B(\GL_n)$ with $\genvb \simeq \genvb_b$, $\modif{\genvb} \simeq \genvb_{\modif{b}}$ and $\genlift{\genvb} \simeq \genvb_{\genlift{b}}$. The effective modification $\modif{\genvb} \inj \genvb$ at $\FFclosedpt$ yields a point in $\Gr_{\GL_n, \gencochar, b}^{\modif{b}}$ by Proposition \ref{minuscule newton strata and minuscule modification} and Proposition \ref{additivity of degree for modifications}, and in turn yields a point in $\Gr_{\GL_n, \dualpolygon{\gencochar}+\lineseg{1}{n}, \modif{b}}^{\genlift{b}}$ by Proposition \ref{reduction to minimal minuscule}. Hence we obtain a minuscule effective modification $\genlift{\genvb} \inj \modif{\genvb}$ at $\FFclosedpt$ by Proposition \ref{minuscule newton strata and minuscule modification} as desired. 
\end{proof}

\begin{prop}\label{classification of minuscule modifications, necessity part}
Let $\genvb$ be a vector bundle on $\schff$ of rank $n$. For every minuscule effective modification $\modif{\genvb} \inj \genvb$ at $\FFclosedpt$, we have 
\[\HN(\genvb) + \dualpolygon{\gencochar} \geq \HN(\modif{\genvb}) \quad\text{ and } \quad \HN(\modif{\genvb}) + \lineseg{1}{n} \slodom \HN(\genvb) \slodom \HN(\modif{\genvb})\]
where $\gencochar$ is the minuscule dominant cocharacter of $\GL_n$ of degree $d:= \deg(\genvb) - \deg(\modif{\genvb})$ with slopes $0$ and $1$. 
\end{prop}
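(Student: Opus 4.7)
The proposition asserts three inequalities: one Bruhat-order inequality and two slopewise dominance inequalities bracketing $\HN(\genvb)$. The slopewise dominance part is essentially formal. Since $\modif{\genvb} \inj \genvb$ is an inclusion of rank-$n$ vector bundles on $\schff$, Proposition~\ref{subsheaves and slopewise dominance} immediately yields $\HN(\genvb) \slodom \HN(\modif{\genvb})$. For the other inequality, I would invoke Lemma~\ref{symmetry of minuscule modifications} to produce a minuscule effective modification $\genlift{\genvb} \inj \modif{\genvb}$ at $\FFclosedpt$ with $\HN(\genlift{\genvb}) = \HN(\genvb) - \lineseg{1}{n}$, and then apply Proposition~\ref{subsheaves and slopewise dominance} once more to get $\HN(\modif{\genvb}) \slodom \HN(\genvb) - \lineseg{1}{n}$, which rearranges to the desired inequality since slopewise dominance is preserved under adding a line segment to both sides.

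For the Bruhat inequality $\HN(\genvb) + \dualpolygon{\gencochar} \geq \HN(\modif{\genvb})$, the plan is to induct on the number of distinct slopes in $\HN(\genvb)$. The base case, where $\genvb$ is semistable, is handled directly by Lemma~\ref{minuscule modification of a semistable bundle}. For the inductive step, I would decompose $\genvb \simeq \gensubvb \oplus \genquotvb$ with $\HN(\gensubvb)$ equal to the line segment of maximum slope in $\HN(\genvb)$. Proposition~\ref{reduction lemma for minuscule modifications} then provides minuscule effective modifications $\modif{\gensubvb} \inj \gensubvb$ and $\modif{\genquotvb} \inj \genquotvb$ at $\FFclosedpt$ together with a short exact sequence $0 \to \modif{\gensubvb} \to \modif{\genvb} \to \modif{\genquotvb} \to 0$, and it decomposes $\gencochar$ as $\gencochar = \gencochar_1 \oplus \gencochar_2$ where each $\gencochar_i$ is minuscule of slopes $0$ and $1$ with appropriate degree, whence $\dualpolygon{\gencochar} = \dualpolygon{\gencochar_1} \oplus \dualpolygon{\gencochar_2}$.

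The induction hypothesis then gives $\HN(\gensubvb) + \dualpolygon{\gencochar_1} \geq \HN(\modif{\gensubvb})$ and $\HN(\genquotvb) + \dualpolygon{\gencochar_2} \geq \HN(\modif{\genquotvb})$. Applying Proposition~\ref{classification of extensions, necessity condition on E-permutation} to the short exact sequence and invoking Lemma~\ref{maximality of concave rearrangement} to replace the $(\modif{\gensubvb}, \modif{\genvb}, \modif{\genquotvb})$-permutation by the concave rearrangement yields $\HN(\modif{\gensubvb}) \oplus \HN(\modif{\genquotvb}) \geq \HN(\modif{\genvb})$. I would then chain Lemma~\ref{inequality on direct sum and sum} (to pass the sum across the direct sum on the left-hand side) with Lemma~\ref{bruhat order and direct sums} (to combine the two induction hypotheses into a single direct-sum inequality) to arrive at $\HN(\genvb) + \dualpolygon{\gencochar} \geq \HN(\modif{\genvb})$. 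The step that most deserves care is verifying the concavity hypothesis of Lemma~\ref{bruhat order and direct sums} for the intermediate polygons $\HN(\gensubvb) + \dualpolygon{\gencochar_1}$ and $\HN(\genquotvb) + \dualpolygon{\gencochar_2}$: the former is immediate since $\HN(\gensubvb)$ is a line segment, and the latter follows from a direct slope-by-slope check exploiting that all the $-1$ slopes of $\dualpolygon{\gencochar_2}$ are clustered at the right end of the polygon, so subtracting $1$ from the last $d_2$ slopes of the concave polygon $\HN(\genquotvb)$ preserves the descending property.
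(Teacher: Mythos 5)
Your proposal is correct and follows essentially the same route as the paper: the slopewise dominance bounds via Proposition~\ref{subsheaves and slopewise dominance} together with Lemma~\ref{symmetry of minuscule modifications}, and the Bruhat inequality by induction on the number of distinct slopes, splitting off the maximal-slope summand via Proposition~\ref{reduction lemma for minuscule modifications} and combining the inductive inequalities with Lemmas~\ref{bruhat order and direct sums} and~\ref{inequality on direct sum and sum} and Proposition~\ref{classification of extensions, necessity condition on E-permutation}. Your explicit verification of the concavity hypothesis for $\HN(\gensubvb)+\dualpolygon{\gencochar_1}$ and $\HN(\genquotvb)+\dualpolygon{\gencochar_2}$ is a welcome detail that the paper leaves implicit.
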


\begin{proof}
The second inequality is an immediate consequence of Proposition \ref{subsheaves and slopewise dominance} and Lemma \ref{symmetry of minuscule modifications}. Hence it remains to establish the first inequality. Let us write $\numslope$ for the number of distinct slopes in $\HN(\genvb)$ and proceed by induction on $\numslope$. 
If $\genvb$ is semistable, the assertion is evident by Lemma \ref{minuscule modification of a semistable bundle}. We henceforth assume that $\genvb$ is not semistable, so that we have $\numslope > 1$. Take a direct sum decomposition
\begin{equation*}
\genvb \simeq \gensubvb \oplus \genquotvb
\end{equation*}
such that $\HN(\gensubvb)$ coincides with the line segment of maximal slope in $\HN(\genvb)$. 
The numbers of distinct slopes in $\HN(\gensubvb)$ and $\HN(\genquotvb)$ are respectively $1$ and $\numslope -1$. 
Now Proposition \ref{reduction lemma for minuscule modifications} yields minuscule effective modifications $\alpha: \modif{\gensubvb} \inj \gensubvb$ and $\beta: \modif{\genquotvb} \inj \genquotvb$ at $\FFclosedpt$ with a short exact sequence
\begin{equation}\label{minuscule modification short exact sequence, necessity part}
0 \longrightarrow \modif{\gensubvb} \longrightarrow \modif{\genextvb} \longrightarrow \modif{\genquotvb} \longrightarrow 0.
\end{equation}
Let us denote the types of $\alpha$ and $\beta$ respectively by $\gencochar_1$ and $\gencochar_2$. 
In a concrete form, we have 
\[\gencochar_1 = \lineseg{1}{d_1} \oplus \lineseg{0}{n_1 - d_1} \quad \text{ and } \quad \gencochar_2 = \lineseg{1}{d_2} \oplus \lineseg{0}{n_2 - d_2}\]
where we set $n_1: = \rk(\gensubvb) = \rk(\modif{\gensubvb})$, $n_2: = \rk(\genquotvb) = \rk(\modif{\genquotvb})$, $d_1:= \deg(\gensubvb) - \deg(\modif{\gensubvb})$ and $d_2:= \deg(\genquotvb) - \deg(\modif{\genquotvb})$. 
By the induction hypothesis, the minuscule effective modifications $\alpha$ and $\beta$ at $\FFclosedpt$ respectively yield the inequalities
\[ \HN(\gensubvb) + \dualpolygon{\gencochar}_1 \geq \HN(\modif{\gensubvb}) \quad \text{ and } \quad \HN(\genquotvb) + \dualpolygon{\gencochar}_2 \geq \HN(\modif{\genquotvb}).\]
Then by Example \ref{direct sum of HN polygons}, Lemma \ref{bruhat order and direct sums} and Lemma \ref{inequality on direct sum and sum} we find
\[ \HN(\genvb) + \dualpolygon{\gencochar} = (\HN(\gensubvb) \oplus \HN(\genquotvb)) + (\dualpolygon{\gencochar}_1 \oplus \dualpolygon{\gencochar}_2) \geq (\HN(\gensubvb) + \dualpolygon{\gencochar}_1) \oplus (\HN(\genquotvb) + \dualpolygon{\gencochar}_2)  \geq \HN(\modif{\gensubvb} \oplus \modif{\genquotvb}).\]
In addition, by Proposition \ref{classification of extensions, necessity condition on E-permutation}
the short exact sequence \eqref{minuscule modification short exact sequence, necessity part} yields the inequality
\[ \HN(\modif{\gensubvb} \oplus \modif{\genquotvb}) \geq \HN(\modif{\genvb}).\]
We thus obtain the first inequality, thereby completing the proof. 
\end{proof}

\begin{remark}
The two inequalities in Proposition \ref{classification of minuscule modifications, necessity part} are not equivalent in general, although they are equivalent if $\genvb$ is semistable as shown in Lemma \ref{minuscule modification of a semistable bundle}. 
\end{remark}

\begin{example}\label{counterexample for converse of minuscule inequalities}
Let us present an example showing that the converse of Proposition \ref{classification of minuscule modifications, necessity part} does not hold. Take $\genvb$ and $\modif{\genvb}$ to be vector bundles on $\schff$ with
\[ \HN(\genvb) = \lineseg{4/3}{3} \oplus \lineseg{3/4}{4} \quad \text{ and } \quad \HN(\modif{\genvb}) = \lineseg{1}{2} \oplus \lineseg{1/3}{3} \oplus \lineseg{0}{2}.\]
By construction, we have $\rk(\genvb) = \rk(\modif{\genvb}) = 7$, $\deg(\genvb) = 7$ and $\deg(\modif{\genvb}) = 3$. 
Now for the minuscule dominant cocharacter $\gencochar$ of $\GL_7$ of degree $4$ with slopes $0$ and $1$, we find
\[\HN(\genvb) + \dualpolygon{\gencochar} \geq \HN(\modif{\genvb}) \quad\text{ and } \quad \HN(\modif{\genvb}) + \lineseg{1}{7} \slodom \HN(\genvb) \slodom \HN(\modif{\genvb}).\]
\begin{figure}[H]
\begin{tikzpicture}[scale=0.75]
		\draw[step=1, dotted, gray, very thin] (-0.3,-0.3) grid (7.3,10.3);
		
		\coordinate (p00) at (2, 4);
		\coordinate (p11) at (5, 8);
		\coordinate (p22) at (7, 10);

		\coordinate (left) at (0, 0);
		\coordinate (q0) at (3, 4);
		\coordinate (q1) at (7, 7);
		
		\coordinate (r1) at (q0);

		\coordinate (p0) at (2, 2);
		\coordinate (p1) at (5, 3);
		\coordinate (p2) at (7, 3);
				
		\draw[step=1cm,thick, color=red] (left) -- (q0) --  (q1);
		\draw[step=1cm,thick, color=mynicegreen] (left) -- (p0) --  (p1) -- (p2);
		\draw[step=1cm,thick, color=teal] (left) -- (p00) --  (p11) -- (p22);
		\draw[step=1cm,thick, color=blue] (r1) -- (p2);
		
		\draw [fill] (q0) circle [radius=0.05];		
		\draw [fill] (q1) circle [radius=0.05];		
		\draw [fill] (left) circle [radius=0.05];
		
		\draw [fill] (r1) circle [radius=0.05];		
		
		\draw [fill] (p0) circle [radius=0.05];		
		\draw [fill] (p1) circle [radius=0.05];		
		\draw [fill] (p2) circle [radius=0.05];		

		\draw [fill] (p00) circle [radius=0.05];		
		\draw [fill] (p11) circle [radius=0.05];		
		\draw [fill] (p22) circle [radius=0.05];	
		

		
		\path (q1) ++(1.0, 0) node {\color{red}$\HN(\genvb)$};
		\path (r1) ++(2.9, -0.1) node {\color{blue}$\HN(\genvb) + \dualpolygon{\gencochar}$};
		\path (p00) ++(-1.6, 0) node {\color{teal}$\HN(\modif{\genvb}) + \lineseg{1}{7}$};
		\path (p0) ++(0.7, -0.5) node {\color{mynicegreen}$\HN(\modif{\genvb})$};

\end{tikzpicture}
\caption{A counter example for the converse of Proposition \ref{classification of minuscule modifications, necessity part}}\label{counter example for converse of HN inequalities}
\end{figure}
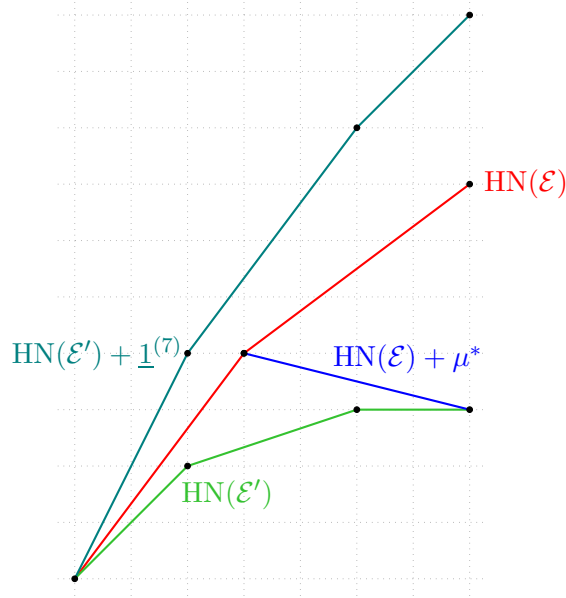
We wish to show that there does not exist a minuscule effective modification $\modif{\genvb} \inj \genvb$ at $\FFclosedpt$. Suppose for contradiction that such a modification exists. Take a direct sum decomposition 
\[\genvb \simeq \gensubvb \oplus \genquotvb\]
with $\HN(\gensubvb) = \lineseg{4/3}{3}$ and $\HN(\genquotvb) = \lineseg{3/4}{4}$. Proposition \ref{reduction lemma for minuscule modifications} yields minuscule effective modifications $\modif{\gensubvb} \inj \gensubvb$ and $\modif{\genquotvb} \inj \genquotvb$ at $\FFclosedpt$ with a short exact sequence
\begin{equation*}\label{minuscule modification short exact sequence for converse counter example}
0 \longrightarrow \modif{\gensubvb} \longrightarrow \modif{\genextvb} \longrightarrow \modif{\genquotvb} \longrightarrow 0.
\end{equation*}
Then by Proposition \ref{classification of extensions, necessity condition on E-permutation} we obtain a $(\modif{\gensubvb}, \modif{\genextvb}, \modif{\genquotvb})$-permutation $\genpolygon$ of $\HN(\modif{\gensubvb} \oplus \modif{\genquotvb})$. Since we have $\genpolygon \geq \HN(\modif{\genvb})$ by construction, we find
\begin{equation}\label{lower bounds for E'-permutation, converse counter example}
\HNslope{\genpolygon}{1} \geq \HNslope{\HN(\modif{\genvb})}{1} = 1 \quad\text{ and } \quad \HNslope{\genpolygon}{2} \geq \HNslope{\HN(\modif{\genvb})}{2} = 1.
\end{equation}
Moreover, as $\modif{\genquotvb}$ is a subsheaf of $\genquotvb$ by construction, Proposition \ref{subsheaves and slopewise dominance} implies that all slopes in $\HN(\modif{\genquotvb})$ are less than or equal to $3/4$. 
We then deduce by \eqref{lower bounds for E'-permutation, converse counter example} that $\HNslope{\genpolygon}{1}$ and $\HNslope{\genpolygon}{2}$ should occur as a slope of $\modif{\gensubvb}$, and in turn find that the inequalities in \eqref{lower bounds for E'-permutation, converse counter example} are in fact equalities. Therefore $\HN(\modif{\gensubvb})$ must contain the line segment $\lineseg{1}{2}$, and consequently is given by $ \lineseg{1}{2} \oplus \lineseg{d}{1}$ for some integer $d$. Then we have $d = \HNslope{\genpolygon}{i}$ for some $i >2$ and thus find $d \leq \HNslope{\HN(\modif{\genvb})}{i} \leq 1/3$. On the other hand, since $\modif{\gensubvb}$ occurs as a minuscule effective modification of $\gensubvb$ at $\FFresfield$, Proposition \ref{classification of minuscule modifications, necessity part} implies $d \geq 1/3$. Now we have a desired contradiction as $d$ is an integer with $d \leq 1/3$ and $d \geq 1/3$. 
\end{example}

\begin{prop}\label{classification of minuscule modifications, case of sufficiently distinct slopes}
Let $\genvb$ and $\modif{\genvb}$ be vector bundles on $\schff$ of rank $n$.
Denote by $\gencochar$ the minuscule dominant cocharacter of $\GL_n$ of degree $d:= \deg(\genvb) - \deg(\modif{\genvb})$ with slopes $0$ and $1$. Assume that $\genvb$ satisfies the following property:
\begin{enumerate}[label=($\ast$)]
\item\label{slope condition for noninductive criterion} All distinct slopes in $\HN(\genvb)$ differ by more than $1$. 
\end{enumerate}
There exists a minuscule effective modification $\modif{\genvb} \inj \genvb$ at $\FFclosedpt$ if and only if  
$\genvb$ and $\modif{\genvb}$ satisfy the following conditions:
\begin{enumerate}[label=(\roman*)]
\item\label{mazur inequality and slopewise dominance for minuscule modifications} We have $\HN(\genvb) + \dualpolygon{\gencochar} \geq \HN(\modif{\genvb})$ and $\HN(\modif{\genvb}) + \lineseg{1}{n} \slodom \HN(\genvb) \slodom \HN(\modif{\genvb})$. 
\smallskip

\item\label{common breakpoints for minuscule modifications} For each breakpoint of $\HN(\genvb)$, there exists 
a breakpoint of $\HN(\modif{\genvb})$ with the same $x$-coordinate. 
\end{enumerate}
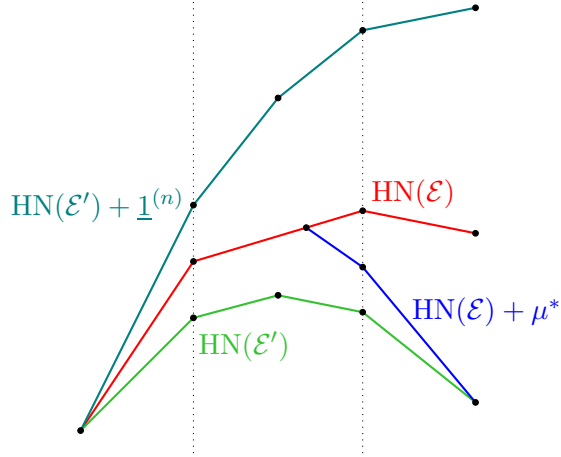
\begin{figure}[H]
\begin{tikzpicture}[scale=0.75]
		\coordinate (p00) at (2, 4);
		\coordinate (p11) at (3.5, 5.9);
		\coordinate (p22) at (5, 7.1);
		\coordinate (p33) at (7, 7.5);

		\coordinate (left) at (0, 0);
		\coordinate (q0) at (2, 3);
		\coordinate (q1) at (5, 3.9);
		\coordinate (q2) at (7, 3.5);
		
		\coordinate (r1) at (4, 3.6);
		\coordinate (r2) at (5, 2.9);

		\coordinate (p0) at (2, 2);
		\coordinate (p1) at (3.5, 2.4);
		\coordinate (p2) at (5, 2.1);
		\coordinate (p3) at (7, 0.5);
				
		\draw[step=1cm,thick, color=red] (left) -- (q0) --  (q1) -- (q2);
		\draw[step=1cm,thick, color=mynicegreen] (left) -- (p0) --  (p1) -- (p2) -- (p3);
		\draw[step=1cm,thick, color=teal] (left) -- (p00) --  (p11) -- (p22) -- (p33);
		\draw[step=1cm,thick, color=blue] (r1) -- (r2) -- (p3);
		
		\draw [fill] (q0) circle [radius=0.05];		
		\draw [fill] (q1) circle [radius=0.05];		
		\draw [fill] (q2) circle [radius=0.05];		
		\draw [fill] (left) circle [radius=0.05];
		
		\draw [fill] (r1) circle [radius=0.05];		
		\draw [fill] (r2) circle [radius=0.05];	
		
		\draw [fill] (p0) circle [radius=0.05];		
		\draw [fill] (p1) circle [radius=0.05];		
		\draw [fill] (p2) circle [radius=0.05];		
		\draw [fill] (p3) circle [radius=0.05];		

		\draw [fill] (p00) circle [radius=0.05];		
		\draw [fill] (p11) circle [radius=0.05];		
		\draw [fill] (p22) circle [radius=0.05];	
		\draw [fill] (p33) circle [radius=0.05];	
		
		\draw[step=1cm,dotted] (2, -0.4) -- (2, 7.7);
		\draw[step=1cm,dotted] (5, -0.4) -- (5, 7.7);

		
		\path (q1) ++(0.9, 0.3) node {\color{red}$\HN(\genvb)$};
		\path (r2) ++(2.2, -0.8) node {\color{blue}$\HN(\genvb) + \dualpolygon{\gencochar}$};
		\path (p00) ++(-1.7, 0) node {\color{teal}$\HN(\modif{\genvb}) + \lineseg{1}{n}$};
		\path (p0) ++(0.9, -0.5) node {\color{mynicegreen}$\HN(\modif{\genvb})$};

\end{tikzpicture}
\caption{Illustration of the conditions in Proposition \ref{classification of minuscule modifications, case of sufficiently distinct slopes}}
\end{figure}
\end{prop}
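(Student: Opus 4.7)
The plan is to prove both directions; necessity of the Bruhat inequality in (i) is Proposition \ref{classification of minuscule modifications, necessity part}, while sufficiency proceeds by induction on the number $\numslope$ of distinct slopes of $\HN(\genvb)$. For necessity of (ii), fix a breakpoint $k$ of $\HN(\genvb)$: the slope gap ($\ast$) gives $\HNslope{\HN(\genvb)}{k} - \HNslope{\HN(\genvb)}{k+1} > 1$, and the slopewise inequalities of (i) (also known from Proposition \ref{classification of minuscule modifications, necessity part}) give $\HNslope{\HN(\modif{\genvb})}{i} \in [\HNslope{\HN(\genvb)}{i} - 1, \HNslope{\HN(\genvb)}{i}]$ for every $i$; combining these at $i = k, k+1$ forces $\HNslope{\HN(\modif{\genvb})}{k} > \HNslope{\HN(\modif{\genvb})}{k+1}$, so $k$ is a breakpoint of $\HN(\modif{\genvb})$.

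For sufficiency, assume (i) and (ii). The base case $\numslope = 1$ ($\genvb$ semistable) is exactly Lemma \ref{minuscule modification of a semistable bundle}. For $\numslope > 1$, I would decompose $\genvb \simeq \gensubvb \oplus \genquotvb$ with $\HN(\gensubvb)$ the line segment of maximum slope, so that $m := \rk(\gensubvb)$ is the first breakpoint of $\HN(\genvb)$. By (ii), $m$ is also a breakpoint of $\HN(\modif{\genvb})$, and by Fargues' classification (Theorem \ref{classification of G-bundles on FF curve}) there exist unique vector bundles $\modif{\gensubvb}$ of rank $m$ and $\modif{\genquotvb}$ of rank $n-m$ whose HN polygons are the two concave pieces of $\HN(\modif{\genvb})$ split at $x = m$ (the second piece translated to the origin); integrality of their degrees is automatic because $m$ is a lattice-point breakpoint of $\HN(\modif{\genvb})$. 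The slopewise dominance of (i), restricted to positions $1, \ldots, m$, makes $\modif{\gensubvb} \inj \gensubvb$ a minuscule effective modification by Lemma \ref{minuscule modification of a semistable bundle}.

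To produce $\modif{\genquotvb} \inj \genquotvb$ I would invoke the induction hypothesis after checking its prerequisites for the pair $(\genquotvb, \modif{\genquotvb})$. The slope gap ($\ast$) is inherited (the distinct slopes of $\HN(\genquotvb)$ form a subset of those of $\HN(\genvb)$), condition (ii) translates across $x = m$, and the slopewise dominance restricts. The remaining Bruhat inequality $\HN(\genquotvb) + \dualpolygon{\gencochar_2} \geq \HN(\modif{\genquotvb})$, where $\gencochar_2$ is the minuscule cocharacter of $\GL_{n-m}$ of degree $d_2 := \deg(\genquotvb) - \deg(\modif{\genquotvb})$, reduces, after setting $f_i := \HNslope{\HN(\genquotvb)}{i} - \HNslope{\HN(\modif{\genquotvb})}{i} \in [0,1]$, to the combinatorial assertion $\sum_{i=1}^j f_i \geq \max(0, j - (n-m-d_2))$ for every $j$; this follows at once by bounding the complementary sum $\sum_{i=j+1}^{n-m} f_i$ by its $n-m-j$ unit summands. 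Finally, the concavity of $\HN(\modif{\genvb})$ at $x = m$ places every slope of $\modif{\gensubvb}$ above every slope of $\modif{\genquotvb}$, so Proposition \ref{splitting extension for vector bundles with dominating slopes} gives the split extension $0 \to \modif{\gensubvb} \to \modif{\gensubvb} \oplus \modif{\genquotvb} \to \modif{\genquotvb} \to 0$, whose total bundle has HN polygon $\HN(\modif{\genvb})$ and is thus isomorphic to $\modif{\genvb}$. Proposition \ref{reduction lemma for minuscule modifications} now converts this data into the desired minuscule effective modification $\modif{\genvb} \inj \genvb$.

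The main obstacle is the combinatorial derivation of the Bruhat inequality for the pair $(\genquotvb, \modif{\genquotvb})$ purely from the slopewise dominance hypothesis, which is what allows the induction to close; everything else is structural assembly from Lemma \ref{minuscule modification of a semistable bundle}, Proposition \ref{splitting extension for vector bundles with dominating slopes}, and Proposition \ref{reduction lemma for minuscule modifications}.
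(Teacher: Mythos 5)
Your proposal is correct, but it takes a genuinely different route from the paper in both directions. For the necessity of (ii) you argue pointwise: at a breakpoint $k$ of $\HN(\genvb)$ the gap hypothesis gives $\HNslope{\HN(\genvb)}{k}-\HNslope{\HN(\genvb)}{k+1}>1$, and the sandwich $\HNslope{\HN(\genvb)}{i}-1\le\HNslope{\HN(\modif{\genvb})}{i}\le\HNslope{\HN(\genvb)}{i}$ supplied by Proposition \ref{classification of minuscule modifications, necessity part} forces $\HNslope{\HN(\modif{\genvb})}{k}>\HNslope{\HN(\modif{\genvb})}{k+1}$, i.e.\ a breakpoint of $\HN(\modif{\genvb})$ at $x=k$. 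This is valid and markedly more elementary than the paper's argument, which instead runs an induction through Proposition \ref{reduction lemma for minuscule modifications}, the permutation criterion of Proposition \ref{classification of extensions, necessity condition on E-permutation}, and the splitting Proposition \ref{splitting extension for vector bundles with dominating slopes} in order to actually split $\modif{\genvb}\simeq\modif{\gensubvb}\oplus\modif{\genquotvb}$; the paper's route yields that extra structural decomposition, but for the stated condition (ii) your two-line comparison suffices. Conversely, for sufficiency the paper is lighter than you: condition (ii) lets one cut $\HN(\modif{\genvb})$ at all breakpoints of $\HN(\genvb)$ at once, apply Lemma \ref{minuscule modification of a semistable bundle} to each semistable summand of the HN decomposition of $\genvb$, and take the direct sum of the resulting modifications, with no induction, no appeal to Proposition \ref{reduction lemma for minuscule modifications}, and in fact no use of the slope-gap hypothesis ($\ast$) in this direction. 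Your inductive peeling of the top slope also works: the construction of $\modif{\gensubvb}$ and $\modif{\genquotvb}$ from the lattice breakpoint, the check that slopewise dominance alone implies the Bruhat inequality against the degree-$d_2$ minuscule cocharacter (your $f_i\in[0,1]$ computation is right, and in fact shows the first inequality in (i) is always a consequence of the second), and the final assembly are all sound --- though once you know every slope of $\modif{\gensubvb}$ exceeds every slope of $\modif{\genquotvb}$, you could simply take the direct-sum modification $\modif{\gensubvb}\oplus\modif{\genquotvb}\inj\gensubvb\oplus\genquotvb=\genvb$ directly, making the invocation of Proposition \ref{reduction lemma for minuscule modifications} unnecessary.
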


\begin{proof}
Let us first assume that $\genvb$ and $\modif{\genvb}$ satisfy the conditions \ref{mazur inequality and slopewise dominance for minuscule modifications} and \ref{common breakpoints for minuscule modifications}. We write the HN decomposition of $\genvb$ as
\begin{equation}\label{HN decomposition for classification of minuscule modifications, case of sufficiently distinct slopes}
\genvb \simeq \bigoplus_{i = 1}^\numslope \genvb_i
\end{equation}
where the direct summands $\genvb_i$ are arranged in order of descending slope, and set
\[ x_i := \sum_{j = 1}^i \rk(\genvb_j) \quad \text{ for } i = 0, \cdots, \numslope.\]
By the condition \ref{common breakpoints for minuscule modifications}, we get a direct sum decomposition
\begin{equation}\label{common break point decomposition}
\modif{\genvb} \simeq  \bigoplus_{i = 1}^\numslope \modif{\genvb}_i
\end{equation}
where each $\HN(\modif{\genvb}_i)$ coincides with the restriction of $\HN(\modif{\genvb})$ on the interval $[x_{i-1}, x_i]$. Then by the condition \ref{mazur inequality and slopewise dominance for minuscule modifications} we find
\[ \HN(\modif{\genvb}_i) + \lineseg{1}{x_i - x_{i-1}} \slodom \HN(\genvb_i) \slodom \HN(\modif{\genvb}_i) \quad \text{ for } i = 1, \cdots, \numslope.\]
Now for each $i = 1, \cdots, \numslope$, Lemma \ref{minuscule modification of a semistable bundle} yields a minuscule effective modification $\modif{\genvb}_i \inj \genvb_i$ at $\FFclosedpt$ as $\genvb_i$ is semistable. Hence we obtain a minuscule effective modification $\modif{\genvb} \inj \genvb$ at $\FFclosedpt$ from the direct sum decompositions \eqref{HN decomposition for classification of minuscule modifications, case of sufficiently distinct slopes} and \eqref{common break point decomposition}.

For the converse, we now assume that there exists a minuscule effective modification $\modif{\genvb} \inj \genvb$ at $\FFclosedpt$. Since $\genvb$ and $\modif{\genvb}$ satisfy the condition \ref{mazur inequality and slopewise dominance for minuscule modifications} by Proposition \ref{classification of minuscule modifications, necessity part}, it remains to establish the condition \ref{common breakpoints for minuscule modifications}. We proceed by induction on the number $\numslope$ of distinct slopes in $\HN(\genvb)$. If $\genvb$ is semistable, the assertion 
is vacuously true as $\HN(\genvb)$ does not have a breakpoint. We henceforth assume that $\genvb$ is not semistable, so that we have $\numslope > 1$. Take a direct sum decomposition
\begin{equation}\label{max slope factor decomp, case of sufficiently distinct slopes}
\genvb \simeq \gensubvb \oplus \genquotvb
\end{equation}
such that $\HN(\gensubvb)$ coincides with the line segment of maximal slope in $\HN(\genvb)$. Let us denote the slope of $\HN(\gensubvb)$ by $\genslope$. By construction, $\HN(\genquotvb)$ has $\numslope-1$ distinct slopes 
which are all less than $\genslope -1$
by the property \ref{slope condition for noninductive criterion}. In addition, we have $\HN(\modif{\genvb}) + \lineseg{1}{n} \slodom \HN(\genvb)$ by Proposition \ref{classification of minuscule modifications, necessity part} and thus find
\begin{equation}\label{minuscule modification slope lower bound up to first common break point} 
\HNslope{\HN(\modif{\genvb})}{i} \geq \genslope-1 \quad \text{ for } i = 1, \cdots, \rk(\gensubvb). 
\end{equation}
Now we note by Proposition \ref{reduction lemma for minuscule modifications} that there exist minuscule effective modifications $\modif{\gensubvb} \inj \gensubvb$ and $\modif{\genquotvb} \inj \genquotvb$ at $\FFclosedpt$ with a short exact sequence
\begin{equation}\label{minuscule modification short exact sequence, case of sufficiently distinct slopes}
0 \longrightarrow \modif{\gensubvb} \longrightarrow \modif{\genextvb} \longrightarrow \modif{\genquotvb} \longrightarrow 0.
\end{equation}
Then we find
\begin{equation}\label{minuscule modification of max slope factor quotient slope lower bound}
\HNslope{\HN(\modif{\genquotvb})}{i} \leq \HNslope{\HN(\genquotvb)}{i} < \genslope-1 \quad \text{ for } i = 1, \cdots, \rk(\modif{\genquotvb})
\end{equation}
by Proposition \ref{subsheaves and slopewise dominance}, and also obtain a $(\modif{\gensubvb}, \modif{\genextvb}, \modif{\genquotvb})$-permutation $\genpolygon$ of $\HN(\modif{\gensubvb} \oplus \modif{\genquotvb})$ by Proposition \ref{classification of extensions, necessity condition on E-permutation}. For each $i = 1, \cdots, \rk(\gensubvb)$, the inequalities \eqref{minuscule modification slope lower bound up to first common break point} and \eqref{minuscule modification of max slope factor quotient slope lower bound} together imply that $\HNslope{\genpolygon}{i}$ occurs as a slope in $\HN(\modif{\gensubvb})$.
Since we have $\genpolygon \geq \HN(\modif{\genvb})$ by construction, we find
\[ \HNslope{\genpolygon}{i} = \HNslope{\HN(\modif{\gensubvb})}{i} = \HNslope{\HN(\modif{\genvb})}{i} \quad \text{ for } i = 1 , \cdots, \rk(\gensubvb)\]
and consequently deduce from the inequalities \eqref{minuscule modification slope lower bound up to first common break point} and \eqref{minuscule modification of max slope factor quotient slope lower bound} that all slopes in $\HN(\modif{\gensubvb})$ are greater than all slopes in $\HN(\modif{\genquotvb})$. Hence the short exact sequence \eqref{minuscule modification short exact sequence, case of sufficiently distinct slopes} induces a direct sum
\begin{equation}\label{common breakpoint decomposition for minuscule modification}
\modif{\genvb} \simeq \modif{\gensubvb} \oplus \modif{\genquotvb}
\end{equation}
by Proposition \ref{splitting extension for vector bundles with dominating slopes}, and consequently yields a breakpoint of $\HN(\modif{\genvb})$ with $x$-coordinate $\rk(\modif{\gensubvb}) = \rk(\gensubvb)$. 
In addition, since we have a minuscule effective modification $\modif{\genquotvb} \inj \genquotvb$ at $\FFclosedpt$, we find by the induction hypothesis that for every breakpoint of $\HN(\genquotvb)$ there exists a breakpoint of $\HN(\modif{\genquotvb})$ with the same $x$-coordinate. We thus establish the condition \ref{common breakpoints for minuscule modifications} by the direct sum decompositions \eqref{max slope factor decomp, case of sufficiently distinct slopes} and \eqref{common breakpoint decomposition for minuscule modification}, thereby completing the proof.
\end{proof}

\begin{theorem}\label{classification of nonempty Newton strata, case of sufficiently distinct slopes}
Let $\gencochar$ be a minuscule dominant cocharacter of $\GL_n$ with slopes $0$ and $1$. 
Take two arbitrary elements $b, \modif{b} \in B(\GL_n)$ and write $\newtonmap{b}:= \HN(\genvb_b)$ and $\newtonmap{\modif{b}}:= \HN(\genvb_{\modif{b}})$.
Assume that $b$ satisfies the following property:
\begin{enumerate}[label=($\ast$)]
\item All distinct slopes in $\newtonmap{b}$ differ by more than $1$. 
\end{enumerate}
The Newton stratum $\Gr_{\GL_n, \gencochar, b}^{\modif{b}}$ is nonempty if and only if 
$\newtonmap{b}$ and $\newtonmap{\modif{b}}$ satisfy the following conditions:
\begin{enumerate}[label=(\roman*)]
\item\label{mazur inequality and slopewise dominance for nonempty newton strata} We have $\newtonmap{b} + \dualpolygon{\gencochar} \geq \newtonmap{\modif{b}}$ and $\newtonmap{\modif{b}} + \lineseg{1}{n} \slodom \newtonmap{b} \slodom \newtonmap{\modif{b}}$. 
\smallskip

\item\label{common breakpoint condition for nonempty newton strata, case of sufficiently distinct slopes} For each breakpoint of $\newtonmap{b}$, there exists 
a breakpoint of $\newtonmap{\modif{b}}$ with the same $x$-coordinate. 
\end{enumerate}
\end{theorem}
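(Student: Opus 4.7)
The plan is to deduce this theorem directly from Proposition \ref{classification of minuscule modifications, case of sufficiently distinct slopes}, which already establishes the analogous statement at the level of minuscule effective modifications of vector bundles on $\schff$. The substantive work has been done there, so the task here is essentially one of translation.

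First, I would invoke Proposition \ref{minuscule newton strata and minuscule modification}(2) to rephrase the question: the stratum $\Gr_{\GL_n, \gencochar, b}^{\modif{b}}$ is nonempty if and only if there exists a minuscule effective modification $\genvb_{\modif{b}} \inj \genvb_b$ at $\FFclosedpt$. Setting $\genvb := \genvb_b$ and $\modif{\genvb} := \genvb_{\modif{b}}$, we have $\newtonmap{b} = \HN(\genvb)$ and $\newtonmap{\modif{b}} = \HN(\modif{\genvb})$, and the hypothesis on the distinct slopes of $\newtonmap{b}$ matches the standing hypothesis of Proposition \ref{classification of minuscule modifications, case of sufficiently distinct slopes}.

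Next, I would handle the degree matching. By Proposition \ref{additivity of degree for modifications}, any minuscule effective modification $\modif{\genvb} \inj \genvb$ at $\FFclosedpt$ has degree equal to $d := \deg(\genvb_b) - \deg(\genvb_{\modif{b}})$. If the degree of $\gencochar$ does not equal $d$, then no minuscule modification of type $\gencochar$ exists, so the stratum is empty; moreover the inequality $\newtonmap{b} + \dualpolygon{\gencochar} \geq \newtonmap{\modif{b}}$ in condition \ref{mazur inequality and slopewise dominance for nonempty newton strata} fails, since the Bruhat order requires matching endpoints. Hence both sides of the claimed equivalence are false in this case, and we may assume the degrees match, so that $\gencochar$ coincides with the minuscule dominant cocharacter appearing in Proposition \ref{classification of minuscule modifications, case of sufficiently distinct slopes}.

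Finally, conditions \ref{mazur inequality and slopewise dominance for nonempty newton strata} and \ref{common breakpoint condition for nonempty newton strata, case of sufficiently distinct slopes} of the theorem translate verbatim to conditions \ref{mazur inequality and slopewise dominance for minuscule modifications} and \ref{common breakpoints for minuscule modifications} of the proposition, so the equivalence follows. No substantive obstacle arises in this translation; the real work lies in Proposition \ref{classification of minuscule modifications, case of sufficiently distinct slopes} itself, whose proof combines the inductive reduction of Proposition \ref{reduction lemma for minuscule modifications} with the splitting criterion of Proposition \ref{splitting extension for vector bundles with dominating slopes} and the necessity statement of Proposition \ref{classification of minuscule modifications, necessity part}.
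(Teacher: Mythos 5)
Your proposal is correct and matches the paper's own proof, which derives the theorem as an immediate consequence of Proposition \ref{minuscule newton strata and minuscule modification}, Proposition \ref{additivity of degree for modifications}, and Proposition \ref{classification of minuscule modifications, case of sufficiently distinct slopes}. Your explicit treatment of the degree-mismatch case (both the emptiness of the stratum and the failure of the Bruhat inequality at the endpoint) is a fair elaboration of the role the degree additivity plays in that one-line deduction.
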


\begin{proof}
The assertion is an immediate consequence of Proposition \ref{minuscule newton strata and minuscule modification}, Proposition \ref{additivity of degree for modifications} and Proposition \ref{classification of minuscule modifications, case of sufficiently distinct slopes}.
\end{proof}

\begin{remark}
Theorem \ref{classification of nonempty Newton strata, case of sufficiently distinct slopes} is identical to Theorem \ref{classification of nonempty Newton strata, case of sufficiently distinct slopes intro}. For a non-minuscule cocharacter $\gencochar$ of $\GL_n$ with slopes in $[0, d]$, we should be able to get a similar classification theorem with $d$ in place of $1$ using the Demazure resolution. 
\end{remark}

\begin{example}\label{example of minuscule modification without common breakpoints}
Let us provide an example to show that Proposition \ref{classification of minuscule modifications, case of sufficiently distinct slopes} and Theorem \ref{classification of nonempty Newton strata, case of sufficiently distinct slopes} do not hold without assuming \ref{slope condition for noninductive criterion}. Take $\genvb$ and $\modif{\genvb}$ to be vector bundles on $\schff$ with
\[ \HN(\genvb) = \lineseg{5/4}{4} \oplus \lineseg{3/4}{4} \quad \text{ and } \quad \HN(\modif{\genvb}) = \lineseg{3/5}{5} \oplus \lineseg{1/3}{3}.\]
Then $\HN(\genvb)$ and $\HN(\modif{\genvb})$ do not have breakpoints with the same $x$-coordinates. 
We wish to show that there exists a minuscule effective modification $\modif{\genvb} \inj \genvb$ at $\FFclosedpt$. Take vector bundles $\gensubvb$, $\modif{\gensubvb}$, $\genquotvb$ and $\modif{\genquotvb}$ on $\schff$ with
\[ \HN(\gensubvb) = \lineseg{5/4}{4}, \quad \HN(\modif{\gensubvb}) = \lineseg{1/4}{4}, \quad \HN(\genquotvb) = \HN(\modif{\genquotvb}) = \lineseg{3/4}{4}.\]
By construction, we have a direct sum decomposition 
\[\genvb \simeq \gensubvb \oplus \genquotvb.\]
In addition, we obtain minuscule effective modifications $\modif{\gensubvb} \inj \gensubvb$ and $\modif{\genquotvb} \inj \genquotvb$ at $\FFclosedpt$ by Lemma \ref{minuscule modification of a semistable bundle}, and find a short exact sequence
\[0 \longrightarrow \modif{\gensubvb} \longrightarrow \modif{\genextvb} \longrightarrow \modif{\genquotvb} \longrightarrow 0\]
by Proposition \ref{classification of extensions}. Therefore Proposition \ref{reduction lemma for minuscule modifications} yields a minuscule effective modification $\modif{\genvb} \inj \genvb$ at $\FFclosedpt$ as desired. 

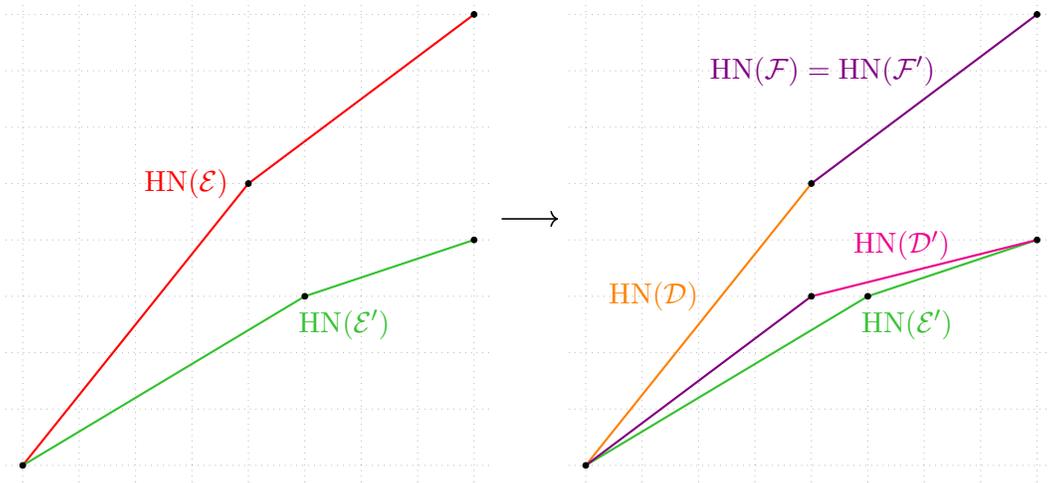
\begin{figure}[H]
\begin{tikzpicture}[scale=0.75]
		\draw[step=1, dotted, gray, very thin] (-0.3,-0.3) grid (8.3,8.3);
		

		\coordinate (left) at (0, 0);
		\coordinate (q0) at (4, 5);
		\coordinate (q1) at (8, 8);
		

		\coordinate (p0) at (5, 3);
		\coordinate (p1) at (8, 4);
				
		\draw[step=1cm,thick, color=red] (left) -- (q0) --  (q1);
		\draw[step=1cm,thick, color=mynicegreen] (left) -- (p0) --  (p1);
		
		\draw [fill] (q0) circle [radius=0.05];		
		\draw [fill] (q1) circle [radius=0.05];		
		\draw [fill] (left) circle [radius=0.05];
		
		
		\draw [fill] (p0) circle [radius=0.05];		
		\draw [fill] (p1) circle [radius=0.05];		

		

		
		\path (q0) ++(-1.1, 0) node {\color{red}$\HN(\genvb)$};
		\path (p0) ++(0.7, -0.5) node {\color{mynicegreen}$\HN(\modif{\genvb})$};

\end{tikzpicture}
\begin{tikzpicture}[scale=0.5]
        \pgfmathsetmacro{\textycoordinate}{7}
		\draw[->, line width=0.6pt] (0, \textycoordinate) -- (1.5,\textycoordinate);
		\draw (0,0) circle [radius=0.00];	
\end{tikzpicture}
\begin{tikzpicture}[scale=0.75]
		\draw[step=1, dotted, gray, very thin] (-0.3,-0.3) grid (8.3,8.3);
		

		\coordinate (left) at (0, 0);
		\coordinate (q0) at (4, 5);
		\coordinate (q1) at (8, 8);
		
		\coordinate (r1) at (4, 3);

		\coordinate (p0) at (5, 3);
		\coordinate (p1) at (8, 4);
				
		\draw[step=1cm,thick, color=orange] (left) -- (q0);
		\draw[step=1cm,thick, color=violet] (q0) -- (q1);
		\draw[step=1cm,thick, color=mynicegreen] (left) -- (p0) --  (p1);
		\draw[step=1cm,thick, color=violet] (left) -- (r1);
		\draw[step=1cm,thick, color=magenta] (r1) -- (p1);
		
		\draw [fill] (q0) circle [radius=0.05];		
		\draw [fill] (q1) circle [radius=0.05];		
		\draw [fill] (left) circle [radius=0.05];
		
		\draw [fill] (r1) circle [radius=0.05];		
		
		\draw [fill] (p0) circle [radius=0.05];		
		\draw [fill] (p1) circle [radius=0.05];		

		

		
		\path (q0) ++(-2.8, -2) node {\color{orange}$\HN(\gensubvb)$};
		\path (q0) ++(0.2, 2) node {\color{violet}$\HN(\genquotvb) = \HN(\modif{\genquotvb})$};
		\path (p0) ++(0.7, -0.5) node {\color{mynicegreen}$\HN(\modif{\genvb})$};
		\path (r1) ++(1.6, 0.9) node {\color{magenta}$\HN(\modif{\gensubvb})$};

\end{tikzpicture}
\caption{Illustration of Example \ref{example of minuscule modification without common breakpoints}}
\end{figure}
\end{example}

\bibliographystyle{amsalpha}

\bibliography{Bibliography}

\providecommand{\bysame}{\leavevmode\hbox to3em{\hrulefill}\thinspace}
\providecommand{\MR}{\relax\ifhmode\unskip\space\fi MR }
\providecommand{\MRhref}[2]{%
  \href{http://www.ams.org/mathscinet-getitem?mr=#1}{#2}
}
\providecommand{\href}[2]{#2}
\begin{thebibliography}{CFS21}

\bibitem[BL95]{BL_beauvillelaszlothm}
Arnaud Beauville and Yves Laszlo, \emph{Un lemme de descente}, C. R. Acad. Sci.
  Paris Sér. I Math. \textbf{320} (1995), no.~1, 335--340.

\bibitem[CFS21]{CFS_admlocus}
Miaofen Chen, Laurent Fargues, and Xu~Shen, \emph{On the structure of some
  $p$-adic period domains}, Camb. J. Math. \textbf{9} (2021), no.~1, 213--267.

\bibitem[Che22]{Chen_FRconjnonbasic}
Miaofen Chen, \emph{Fargues-{R}apoport conjecture for $p$-adic period domains
  in the non-basic case}, J. Eur. Math. Soc. \textbf{25} (2022), no.~7,
  2879--2918.

\bibitem[CS17]{CS_cohomunitaryshimura}
Ana Caraiani and Peter Scholze, \emph{{O}n the generic part of the cohomology
  of compact unitary {S}himura varieties}, Annals of Math. \textbf{186} (2017),
  no.~3, 649--766.

\bibitem[CT22]{CT_weakaddandnewton}
Miaofen Chen and Jilong Tong, \emph{Weakly admissible locus and {N}ewton
  stratification in $p$-adic {H}odge theory}, Amer. J. Math., to appear.

\bibitem[Far20]{Fargues_Gbundle}
Laurent Fargues, \emph{{$G$}-torseurs en th\'eorie de {H}odge {$p$}-adique},
  Comp. Math. \textbf{156} (2020), no.~10, 2076--2110.

\bibitem[FF18]{FF_curve}
Laurent Fargues and Jean-Marc Fontaine, \emph{Courbes et fibr\'es vectoriels en
  th\'eorie de {H}odge $p$-adique}, Ast\'erisque \textbf{406} (2018).

\bibitem[Fon82]{Fontaine_BdR}
Jean-Marc Fontaine, \emph{Sur certains types de representations p-adiques du
  groupe de {G}alois d'un corps local; {C}onstruction d'un anneau de
  {B}arsotti-{T}ate}, Annals of Math. \textbf{115} (1982), no.~3, 529--577.

\bibitem[FS21]{FS_geomLL}
Laurent Fargues and Peter Scholze, \emph{Geometrization of the local
  {L}anglands correspondence}, arXiv:2102.13459.

\bibitem[Hon21]{Hong_subvb}
Serin Hong, \emph{Classification of subbundles on the {F}argues-{F}ontaine
  curve}, Algebra \& Number Theory \textbf{15} (2021), no.~5, 1127--1156.

\bibitem[Hon22]{Hong_extvbfinal}
\bysame, \emph{Extensions of vector bundles on the {F}argues-{F}ontaine curve
  {II}}, J. Algebraic Geom., to appear.

\bibitem[Ked16]{Kedlaya_noeth}
Kiran~S. Kedlaya, \emph{Noetherian properties of {F}argues-{F}ontaine curves},
  Int. Math. Res. Not. IMRN (2016), no.~8, 2544--2567.

\bibitem[KL15]{KL_relpadic1}
Kiran~S. Kedlaya and Ruochuan Liu, \emph{Relative {$p$}-adic {H}odge theory:
  {F}oundations}, Ast\'erisque \textbf{371} (2015).

\bibitem[Kot85]{Kottwitz_isocrystal}
Robert~E. Kottwitz, \emph{Isocrystals with additional structure}, Comp. Math.
  \textbf{56} (1985), no.~2, 201--220.

\bibitem[NV23]{NV_HNstrata}
Kieu~Hieu Nguyen and Eva Viehmann, \emph{A {H}arder-{N}arasimhan stratification
  of the ${B}^+_\text{dR}$-{G}rassmannian}, Comp. Math. \textbf{159} (2023),
  no.~4, 711--745.

\bibitem[Rap18]{Rapoport_basicminuscule}
Michael Rapoport, \emph{Accessible and weakly accessible period domains},
  Annales de l'ENS \textbf{51} (2018), no.~4, 856--863, appendix to \textit{On
  the p-adic cohomology of the Lubin-Tate tower} by P. Scholze.

\bibitem[She23]{Shen_HNstrata}
Xu~Shen, \emph{Harder-{N}arasimhan strata and $p$-adic period domains}, Trans.
  Am. Math. Soc. \textbf{376} (2023), no.~5, 3191--3263.

\bibitem[SW20]{SW_berkeley}
Peter Scholze and Jared Weinstein, \emph{Berkeley lectures on $p$-adic
  geometry}, Annals of Math. Studies, vol. 207, 2020.

\bibitem[Vie24]{Viehmann_weakadmlocNewton}
Eva Viehmann, \emph{On {N}ewton strata in the
  ${B}^+_\text{dR}$-{G}rassmannian}, Duke Math. J. \textbf{173} (2024), no.~1,
  177--225.

\end{thebibliography}
	
\end{document}